\newtheorem{theorem}{Theorem}[section]
\newtheorem{lemma}[theorem]{Lemma}
\newtheorem{proposition}[theorem]{Proposition}
\newtheorem{corol}[theorem]{Corollary}
\newtheorem{definition}[theorem]{Definition}
\theoremstyle{definition}
\newtheorem{remark}[theorem]{Remark}
\newtheorem{example}[theorem]{Example}
\newcommand{\w}{\omega}
\newcommand{\N}{\mathbb{N}}
\newcommand{\R}{\mathbb{R}}
\newcommand{\C}{\mathbb{C}}
\newcommand{\E}{\mathbb{E}}
\newcommand{\cS}{\mathcal{S}}
\newcommand{\cF}{\mathcal{F}}
\newcommand{\cH}{\mathcal{H}}
\newcommand{\floor}[1]{\lfloor #1 \rfloor}
\newcommand{\ov}{\overline}
\newcommand{\un}{\underline}
\newcommand{\Cl}{{C \kern -0.1em \ell}}
\newcommand{\inner}[1]{\left\langle  #1 \right\rangle }
\newcommand{\Inner}[1]{\Big\langle  #1 \Big\rangle }
\title[Generalized Fock space and moments]{Generalized Fock space and moments}
\author[D. Alpay]{Daniel Alpay}
\address{(DA) Department of Mathematics\\
Chapman University\\
One University Drive
Orange, California 92866\\
USA}
\email{alpay@chapman.edu}
\author[P. Cerejeiras]{Paula Cerejeiras}
\author[U. K\"ahler]{Uwe K\"ahler}
\address{(PC) and (UK):  CIDMA - Center for Research and Development in Mathematics and
Applications, Department of Mathematics,\newline University of Aveiro \newline Campus Universit\'ario de Santiago \newline 3810-193 Aveiro, Portugal}
\email{pceres@ua.pt}
\email{ukaehler@ua.pt}
\date{}
\begin{document}

\begin{abstract}
In this paper we develop a framework to extend the theory of generalized stochastic processes
in the Hida white noise space to more general probability spaces which include the
grey noise space. To obtain a Wiener-It\^o expansion we recast it as a moment problem and calculate the moments explicitly. We further show the importance of a
family of topological algebras called strong algebras in this context. Furthermore we show the applicability of our approach to the study of stochastic processes.
\end{abstract}

\maketitle

\noindent AMS Classification. Primary: 60H40, 60G22. Secondary: 33E12, 46F25.\\

Keywords: Non Gaussian analysis, grey noise space, generalized Fock space, strong algebras.
\tableofcontents

\section{Introduction}
\setcounter{equation}{0}
Infinite dimensional analysis and its applications to the theory of generalized stochastic processes originate with the work of T. Hida; see
\cite{MR1244577,MR2444857,MR1408433,MR1387829}. In the one variable setting, the key in Hida's work is the use of the Bochner-Minlos theorem applied to the positive definite Gaussian function $e^{-\frac{\|s\|^2}{2}}$ where $s$ varies in the space of real-valued Schwartz functions $\cS(\R)$ and $\| \cdot \|$ is the $\mathbf L^2(\mathbb R,dx)$ norm, writing
\begin{equation}
\label{montmartre}
e^{-\frac{\|s\|^2}{2}}=\int_{{\mathcal S}^\prime(\R)}e^{i\langle \w,s\rangle}dP(\w).
\end{equation}
In this expression, $\mathcal S^\prime(\R)$ is the space of real-valued tempered distributions, the brackets denote the duality between $\mathcal S(\R)$ and $\mathcal S^\prime(\R)$, and $P(\w)$ is a
probability measure (called white noise measure) on the cylindrical sigma-algebra $\mathcal C$ of $\mathcal S^\prime(\R)$. The probability space $(\mathcal S'(\R),\mathcal C, dP)$ is then called Hida
white noise space. Furthermore, the Lebesgue space $\mathbf L^2(\mathcal S^\prime(\R),\mathcal C, dP)$ is shown to be isomorphic in a natural sense to the Fock space associated to
$\ell^2(\mathbb N,\mathbb C)$, i.e. to the reproducing kernel Hilbert space with reproducing kernel
\begin{equation}
\label{nfock}
e^{\sum_{n=1}^\infty z_n \overline w_n} = \prod_{n=1}^\infty e^{z_n \overline w_n}.
\end{equation}
We further note that the space $\mathbf L^2(\mathcal S^\prime(\R),\mathcal C, dP)$ can be imbedded in a number of Gelfand triples, consisting of stochastic test
functions and stochastic distributions. In one of these triples introduced by Kondratiev, the corresponding space of stochastic distributions has a topological
algebra structure and possesses a number of inequalities proved by V\r{a}ge
\cite{vage96}, \cite[Corollary 1, p. 57]{MR2714906}.\smallskip

In the above Gaussian case, there is a lucky coincidence: the function $e^z$ generates both the white noise measure (via \eqref{montmartre}) and the Fock space (via \eqref{nfock}), and the reproducing kernel of the Fock space is a product of positive definite functions. Furthermore, due to the multiplicative property of the exponential the resulting probability measure is a product measure which leads in a natural way to tensor product representations, in particular the basis in terms of orthogonal polynomials is immediately given by products of standard Hermite polynomials in the corresponding variables. Unfortunately, for more general probability measures this is not anymore true. This leads to the principal problem of obtaining Wiener-It\^o chaos expansions.
Many attempts were made to generalize the case of white noise. The main idea was to move from the Gaussian measure to Poisson measure, where the Hermite polynomials appearing in the white noise case were replaced by Appell systems as biorthogonal systems.
There were also several attempts to preserve more of the structure of the white noise case by looking for generalizations to more Gaussian-like measures
(like for instance obtained when using the Mittag-Leffler function rather than the exponential), which includes the case of grey noise
(randomized Gaussian noise) and generalized grey noise with applications to fractional Brownian motion~\cite{MR3464056, drumond-oliveira-silva08, Bock2019a}. The idea behind these attempts is that the Mittag-Leffler
function still shares many properties of the exponential. Nevertheless even in this case preliminary results in this direction fall short of creating the necessary
framework to allow a treatment similar to the white noise case, moving instead in the direction of using Appell systems~\cite{MR3315581}. Among other reasons this is
due to the underestimation of the crucial role of a family of topological algebras, called strong algebras, in the classic white noise case. \smallskip

In this paper, still using the Bochner-Minlos theorem, we construct a family of probability spaces, which include the white noise space (Gaussian measure) and the grey noise space or generalized grey noise space (Mittag-Leffler measure), and for which the associated (counterpart of) the Fock space has still the above multiplicative structure. More precisely, both the white noise space and its generalization, the grey noise space, are special instances of
probability spaces obtained by applying the Bochner-Minlos theorem to a specific family of positive definite functions, which we call $ML$ and now introduce:

\begin{definition}[The class $ML$] We denote by $ML$ the class of all entire functions $\varphi(z)=\sum_{n=0}^\infty \varphi_n z^n$ satisfying the following conditions:
\begin{enumerate}[i)]
\item $\varphi(0)=1$ and $\varphi^\prime(0)>0$.
\item $\varphi(z\overline{w})$ is a positive definite function on $\mathbb C;$
\item $\varphi(-\|\cdot\|^2/2)$ is a positive definite function on $\mathcal S({\mathbb R})$.
\end{enumerate}
\end{definition}

We note that the function $s\mapsto\varphi(-\|\cdot\|^2/2)$ is continuous with respect to the $\mathbf L^2(\mathbb R,dx)$ topology, and
hence with respect to the Fr\'echet topology on $\mathcal S({\mathbb R})$.\smallskip

As we will see below in Proposition \ref{totoche1}, we have $\varphi_n\ge 0$ for $n\in\mathbb N$.\smallskip

Basic examples of functions in this class are the exponential function $\varphi(z) = e^z,$ linked to Gaussian processes and the white noise space, and the Mittag-Leffler function (entire of order $1/ \alpha$)
$$E_{\alpha}(z) = \sum_{k=0}^\infty \frac{z^k}{\Gamma(\alpha k + 1)}, \quad \alpha >0,$$ on which the grey noise space analysis is based (see the thesis \cite{J2015} or paper~\cite{MR3315581}).\\

We develop the counterpart of white noise space theory for the class $ML$, having as a special case the grey noise space setting.
An important point is that the counterpart of the Fock space is now the reproducing kernel Hilbert space with reproducing kernel

\begin{equation}
\label{fockphi}
K_\varphi(z,w)=\prod_{j=1}^\infty \varphi(z_j\overline{w_j}),
\end{equation}
defined for the sequences $(z_1,z_2,\ldots)$ such that the product
\[
\prod_{j=1}^\infty \varphi(|z_j|^2)
\]
converges. See Proposition \ref{corona123}.\\

The paper consists of eight sections besides this introduction and we now describe its content. Section \ref{sec2} focuses on the family $ML$ and on the classes of
probability spaces obtained from its functions. Moments of the corresponding probability measures and orthogonal polynomials are considered in Section \ref{sec3}.
In Section 4 we study the reproducing kernel Hilbert space associated to an infinite product of positive definite functions, while in Section 5 we use these results
to consider the Fock space in the present setting. In particular we study the role of the Gelfond-Leontiev operator $\partial^{\varphi}$ of fractional differentiation. In view of applications to generalized stochastic processes we review in Section 6 the main aspects of the above mentioned family of topological algebras, called strong algebras. The last three sections are devoted to applications to the theory of stochastic processes and their derivatives.

\section{Grey noise space and the family $ML$}
\setcounter{equation}{0}
\label{sec2}
\subsection{Framework}
%%%%%%%%%
%UWE's: It is a well known fact that for $f(z)=e^z$ we can identify  with the reproducing kernel Hilbert space with kernel (NOT FINISHED)

%In this paper we are going to show that this approach is useful in a much wider range, i.e. under the conditions  (1.1) and (1.2) we have that

%First the known case with the Hermite transform and then the general case which is ours.
%%%%%%%%%%%%
\vspace{15pt}

We start with the following proposition.

\begin{proposition}
Let $\varphi\in ML$, with power series expansion $\varphi(z)=1+\sum_{n=1}^\infty \varphi_nz^n$. Then, $\varphi_n\ge 0$ for $n=1,2,\ldots$
\end{proposition}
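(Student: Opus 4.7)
The plan is to extract the sign of each coefficient $\varphi_n$ from condition (ii), namely the positive definiteness of $\varphi(z\overline{w})$ on $\mathbb{C}$, by probing the kernel with a configuration of points on a circle and coefficients given by roots of unity. The underlying observation is that the Taylor series $\varphi(z\overline{w})=\sum_{m\geq 0}\varphi_m z^m\overline{w}^m$ is a kernel whose ``mode decomposition'' is precisely indexed by $m$, so picking up $\varphi_n$ for a given $n$ amounts to projecting onto the $n$-th character.

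Concretely, I would fix $n\geq 1$ and an integer $N>n$, and consider the $N$ points $z_j:=r\,e^{2\pi i j/N}$ on a circle of radius $r>0$, together with scalars $c_j:=e^{-2\pi i n j/N}$. Since $z_j\overline{z_k}=r^2 e^{2\pi i(j-k)/N}$, the Taylor expansion yields
\[
\varphi(z_j\overline{z_k})=\sum_{m=0}^\infty \varphi_m\,r^{2m}\,e^{2\pi i m(j-k)/N}.
\]
Applying positive definiteness, $\sum_{j,k} c_j\overline{c_k}\,\varphi(z_j\overline{z_k})\geq 0$, and the character orthogonality
$\bigl|\sum_{j=0}^{N-1}e^{2\pi i(m-n)j/N}\bigr|^2=N^2\mathbf{1}_{m\equiv n\,(\mathrm{mod}\,N)}$
collapses the double sum to
\[
N^2\sum_{k=0}^\infty \varphi_{n+kN}\,r^{2(n+kN)}\;\geq\;0.
\]

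Dividing by $N^2 r^{2n}$ gives $\varphi_n+\sum_{k\geq 1}\varphi_{n+kN}\,r^{2kN}\geq 0$, and letting $r\to 0^+$ isolates $\varphi_n\geq 0$. The only point requiring a line of justification is the vanishing of the tail: because $\varphi$ is entire the series $\sum_m|\varphi_m|r^{2m}$ converges for every $r>0$, so the tail is bounded by $r^{2N}\sum_m|\varphi_m|$ for $r\leq 1$ and tends to $0$. I do not expect a substantive obstacle; the proof is really just the standard extraction-of-coefficients trick for kernels of the form $\varphi(z\overline{w})$, where conditions (i) and (iii) play no role here and only condition (ii) is used.
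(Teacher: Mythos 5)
Your proof is correct, but it follows a genuinely different route from the paper's. The paper splits $\varphi(z\overline{w})=\varphi_+(z\overline{w})+\varphi_-(z\overline{w})$ into the parts with nonnegative and strictly negative coefficients and then invokes reproducing kernel Hilbert space theory: since $\varphi_+ = \varphi + (-\varphi_-)$ exhibits $-\varphi_-$ as a positive definite kernel dominated by $\varphi_+$, one gets $\mathcal H(-\varphi_-)\subseteq\mathcal H(\varphi_+)$, yet these spaces are spanned by disjoint sets of monomials, forcing $\varphi_-=0$. You instead extract each coefficient directly from the positive definiteness inequality by evaluating the kernel at the $N$-th roots of unity scaled by $r$, with weights $c_j=e^{-2\pi i nj/N}$; character orthogonality collapses the double sum to $N^2\sum_{k\ge 0}\varphi_{n+kN}r^{2(n+kN)}\ge 0$, and letting $r\to 0^+$ after dividing by $N^2r^{2n}$ isolates $\varphi_n\ge 0$. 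Your interchange of sums is justified by absolute convergence (entirety of $\varphi$), your tail estimate is valid, and as a bonus you do not need to argue separately that the $\varphi_n$ are real, since each truncated expression is a nonnegative real number and $\varphi_n$ is its limit. What your approach buys is elementarity and completeness: it uses only the definition of positive definiteness and avoids the Aronszajn ordering theorem for kernels, which the paper leaves implicit. What the paper's approach buys is brevity and a structural viewpoint (the decomposition of $\mathcal H(\varphi(z\overline{w}))$ into monomial components) that recurs later in its treatment of generalized Fock spaces.
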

\begin{proof}
The condition $\varphi(z\overline{z})\ge 0$ implies that the $\varphi_n$ are real. Write then
\[
\varphi(z\overline{w})=\varphi_+(z\overline{w})-(-\varphi_-(z\overline{w}))
\]
where the first sum contains all the terms with positive coefficients $\varphi_n$ and the second sum contains all the terms with strictly negative coefficients $\varphi_n$.
This expresses $\varphi(z\overline{w})$ as a difference of two positive definite functions, with associated reproducing kernel Hilbert space having a trivial intersection. Hence, the second sum is empty
and $\varphi_-(z)=0$.
\end{proof}

\begin{proposition}
\label{totoche1}
Let $\varphi, \psi \in ML$, and let $p_1, p_2$ denote finite probability distributions such that $p_1+p_1=1$. Then:
\begin{enumerate}
\item $p_1 \varphi + p_2 \psi \in ML;$
\item $\varphi \psi \in ML;$
\item $\frac{\varphi ( \psi ( \cdot ))}{\varphi(1)}  \in ML.$
\end{enumerate}
\end{proposition}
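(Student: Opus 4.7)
My plan is to verify the three defining conditions of the class $ML$ for each of the three constructions. The two workhorses are: (a) nonnegative linear combinations of positive definite kernels are positive definite, and (b) the Schur (entrywise) product of positive definite kernels is positive definite — so, after taking limits in the closed cone of positive semidefinite matrices, any power series with nonnegative coefficients applied entrywise to a positive definite kernel is again positive definite. The preceding proposition, which guarantees $\varphi_n\ge 0$ for every $\varphi\in ML$, is exactly what makes (b) applicable in item (3).

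Items (1) and (2) are routine. The values at $0$ and the derivatives at $0$ are computed directly: $(p_1\varphi+p_2\psi)(0)=1$ and $(p_1\varphi+p_2\psi)'(0)=p_1\varphi'(0)+p_2\psi'(0)>0$ for (1), while $(\varphi\psi)(0)=1$ and $(\varphi\psi)'(0)=\varphi'(0)+\psi'(0)>0$ for (2). Positive definiteness on $\mathbb{C}$ follows from convex combination in (1) and from the Schur product theorem in (2) applied to the positive definite kernels $\varphi(z\overline w)$ and $\psi(z\overline w)$. The same two arguments applied pointwise to $\varphi(-\|s-t\|^2/2)$ and $\psi(-\|s-t\|^2/2)$ on $\mathcal S(\R)$ handle condition (iii) in both cases.

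The substantive case is (3). Since $\varphi_n,\psi_n\ge 0$ with $\varphi_1=\varphi'(0)>0$ and $\psi_1=\psi'(0)>0$, we have $\varphi(1)\ge 1>0$ and $\varphi'(1)\ge \varphi_1>0$, so normalizing by $\varphi(1)$ is legitimate; the chain rule then gives
\[
\Bigl(\tfrac{\varphi(\psi(\cdot))}{\varphi(1)}\Bigr)'(0)=\tfrac{\varphi'(1)\,\psi'(0)}{\varphi(1)}>0,
\]
and the value at $0$ is $\varphi(\psi(0))/\varphi(1)=\varphi(1)/\varphi(1)=1$. For the positive definiteness conditions, expand
\[
\varphi\bigl(\psi(z\overline w)\bigr)=\sum_{n=0}^\infty \varphi_n\,\psi(z\overline w)^n;
\]
each power $\psi(z\overline w)^n$ is positive definite as a Schur power of the positive definite kernel $\psi(z\overline w)$, the coefficients $\varphi_n$ are nonnegative, and closedness of the PSD cone under pointwise limits disposes of the series. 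Replacing $\psi(z\overline w)$ by $\psi(-\|s-t\|^2/2)$ gives condition (iii) verbatim. The only real obstacle is the sign control on the Taylor coefficients: without the preceding proposition the Schur-power argument would collapse, since mixed-sign combinations of positive definite kernels need not be positive definite.
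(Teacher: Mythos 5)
Your proof is correct and follows essentially the same route as the paper's: sums and Schur (entrywise) products of positive definite kernels handle (1) and (2), and for (3) one expands $\varphi(\psi(z\overline{w}))=\sum_{k}\varphi_k\,\psi(z\overline{w})^k$ and invokes the nonnegativity of the $\varphi_k$ established in the preceding proposition. You additionally spell out the verification of $\chi(0)=1$ and $\chi'(0)>0$ for each construction, which the paper leaves implicit.
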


\begin{proof}
Continuity with respect to the Fr\'echet topology of the various functions is immediate.
The first claim holds then since a sum of positive definite functions is still positive definite, together with the fact that $p_1 \varphi(0) + p_2 \psi(0)= p_1 + p_2=1$. The second
and third claims follow from the fact that a product of positive definite functions is still positive definite. The claim on the composition uses both facts and the previous proposition, writing
\[
\varphi(\psi(z\overline{w}))=\sum_{k=0}^\infty \varphi_k(\psi(z\overline{w}))^k,\quad{with}\quad \varphi(z)=1+\sum_{n=1}^\infty \varphi_nz^n.
\]
\end{proof}

For instance the functions
\begin{equation}
e^{(e^z-1)},\quad e^{(e^{(e^{z}-1)}-1)},
\end{equation}
the Mittag-Leffler function
\[
E_{\alpha}(z) = \sum_{k=0}^\infty \frac{z^k}{\Gamma(\alpha k + 1)}, \quad \alpha >0,
\]
(which corresponds to the grey noise space) and
\[
E_{\alpha}(e^z-1) = \sum_{k=0}^\infty \frac{(e^z-1)^k}{\Gamma(\alpha k + 1)}, \quad \alpha >0,
\]
all belong to $ML$.\\

We remark that when $\varphi$ is the exponential function the probability measure $P_{\varphi}$ on $\cS'(\R)$ is the Gaussian measure.
The triple $(\cS'(\R), \mathcal C, P_\varphi)$ is then called the $1-$dimensional white noise probability space. White noise analysis is based on the triple
$(\cS, \mathbf L^2(\R, dx), \cS' )$ where $\cS=\cS(\R)$ is the Schwartz space of smooth functions and $\cS' = \cS'(\R)$ is the space of tempered distributions.\\

However, in general elements of the $ML$ class do not have the product property so that the probability measure $P_{\varphi}$ is not a product measure. A typical example is the Mittag-Leffler function, corresponding to the $1-$dimensional grey noise probability space.\\

We now precise the domain of convergence of the product \eqref{fockphi}:

\begin{proposition}
The infinite product \eqref{fockphi} converges in the domain
\begin{equation}
\Omega_\varphi=\left\{(z_1,z_2,\ldots)\,\, :\sum_{j=1}^\infty |1-\varphi(|z_j|^2)| <\infty\right\}
\end{equation}
and contains in particular the space $\ell^2(\mathbb N,\mathbb C)$.
\label{corona123}
\end{proposition}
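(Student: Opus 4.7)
The plan is to use the proposition just proved, which gives $\varphi_n\ge 0$, to obtain a clean pointwise estimate of $|1-\varphi(z_j\overline{w_j})|$ in terms of $\varphi(|z_j|^2)$ and $\varphi(|w_j|^2)$, and then apply the classical absolute convergence criterion for infinite products: $\prod_j a_j$ with $a_j\to 1$ converges (absolutely) provided $\sum_j |1-a_j|<\infty$.

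First I would bound, for each $j$, using nonnegativity of the coefficients,
\[
|1-\varphi(z_j\overline{w_j})|=\left|\sum_{n=1}^\infty \varphi_n (z_j\overline{w_j})^n\right|\le \sum_{n=1}^\infty \varphi_n |z_j|^n|w_j|^n=\varphi(|z_j||w_j|)-1.
\]
Then apply Cauchy-Schwarz to the right-hand side, splitting $\varphi_n|z_j|^n|w_j|^n=(\sqrt{\varphi_n}|z_j|^n)(\sqrt{\varphi_n}|w_j|^n)$, to get
\[
\varphi(|z_j||w_j|)-1\le \sqrt{\varphi(|z_j|^2)-1}\cdot\sqrt{\varphi(|w_j|^2)-1},
\]
and finish with AM-GM to obtain the symmetric bound
\[
|1-\varphi(z_j\overline{w_j})|\le \tfrac{1}{2}\bigl(|1-\varphi(|z_j|^2)|+|1-\varphi(|w_j|^2)|\bigr),
\]
where the absolute values on the right are redundant since $\varphi\ge 1$ on $[0,\infty)$. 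Summing over $j$, for $(z_j),(w_j)\in\Omega_\varphi$ the right-hand side is finite, so the infinite product \eqref{fockphi} converges absolutely.

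For the second claim, that $\ell^2(\N,\C)\subset\Omega_\varphi$, I would use that $\varphi$ is entire with $\varphi(0)=1$, so there exists $R>0$ and $C>0$ with $\varphi(t)-1\le C\,t$ for $0\le t\le R$. For $(z_j)\in\ell^2$ we have $|z_j|^2\to 0$, so all but finitely many indices satisfy $|z_j|^2\le R$, and for those indices
\[
|1-\varphi(|z_j|^2)|=\varphi(|z_j|^2)-1\le C|z_j|^2.
\]
Summing yields $\sum_j |1-\varphi(|z_j|^2)|<\infty$, i.e.\ $(z_j)\in\Omega_\varphi$.

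I do not anticipate a serious obstacle; the only conceptual step is the Cauchy-Schwarz trick that decouples the variables $z_j$ and $w_j$ into an expression involving only $|z_j|^2$ and $|w_j|^2$, so that the domain of convergence can be phrased in terms of the single sequence condition defining $\Omega_\varphi$. Everything else is a direct application of the standard infinite product criterion and elementary analytic behavior of $\varphi$ near the origin.
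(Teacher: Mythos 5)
Your proof is correct and follows essentially the same route as the paper: a Cauchy--Schwarz decoupling of $z_j$ and $w_j$ (the paper derives $|\varphi(z\overline w)|^2\le\varphi(|z|^2)\varphi(|w|^2)$ from positive definiteness, you derive the analogous bound for $\varphi-1$ from the nonnegativity of the $\varphi_n$), followed by the standard criterion $\sum_j|1-a_j|<\infty$ for infinite products and the local bound $|\varphi(t)-1|\le Kt$ near $0$ for the $\ell^2$ inclusion. If anything, your version is the more careful rendering, since it is the kernel $\varphi(z\overline w)-1$, not $\varphi(z\overline w)$ itself, whose Cauchy--Schwarz estimate feeds directly into the product criterion.
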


\begin{proof}
The function $\varphi(z\overline{w})$ is positive definite and thus it holds that
\begin{equation}
\label{sauce-bechaeml}
|\varphi(z\overline{w})|^2\le\varphi(|z|^2)  \varphi(|w|^2),\quad z,w\in\mathbb C.
\end{equation}
Since $\varphi(0)=1$ the first claim follows from the characterization of the convergence of infinite products. The second claim follows from the bound
\[
|\varphi(z)-1|\le K|z|,\quad |z|\le 1
\]
for some $K>0$.
\end{proof}

 %%%%%%%%%%%%%%%
\subsection{A family of probability spaces}
Let us recall the version of the Bochner-Minlos theorem which we will be using here. The notation is that of \eqref{montmartre}.
\begin{theorem}
Let $f$ be a continuous complex-valued function on the Fr\'echet space $\cS(\R)$, such that $f(0)=1$, and assume that the kernel $f(s_1-s_2)$ is positive definite on $\cS(\R)$.
Then there exists a uniquely defined probability measure $P_f$ on the cylindrical sigma-algebra $\mathcal C$ of $\mathcal S^\prime(\R)$ such that
\[
f(s)=\int_{\cS^\prime(\R)}e^{i\langle \w,s\rangle}dP_f(\w).
\]
\end{theorem}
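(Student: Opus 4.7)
The plan is to follow the standard two-stage Kolmogorov/Minlos argument: first produce a consistent family of finite-dimensional distributions by applying classical Bochner, then promote the resulting cylindrical set function on $\cS'(\R)$ to a genuine $\sigma$-additive probability measure on $\mathcal{C}$ using the nuclearity of $\cS(\R)$.

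First I would fix any finite collection $s_1,\ldots,s_n\in\cS(\R)$ and consider the map $\R^n\to\C$ defined by $(t_1,\ldots,t_n)\mapsto f(t_1 s_1+\cdots+t_n s_n)$. By linearity and the hypothesis that $f(s_1-s_2)$ is positive definite on $\cS(\R)$, this map is a continuous positive definite function on $\R^n$ with value $1$ at the origin. Classical Bochner's theorem therefore yields a unique Borel probability measure $\mu_{s_1,\ldots,s_n}$ on $\R^n$ whose Fourier transform is this map. Next I would check the Kolmogorov consistency conditions: if one reorders or projects the $s_i$, the pushforward of $\mu_{s_1,\ldots,s_n}$ agrees with the directly defined measure on the smaller collection. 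This is immediate from uniqueness in finite-dimensional Bochner, since the Fourier transforms coincide. Pulling these back along the evaluation maps $\w\mapsto(\langle\w,s_1\rangle,\ldots,\langle\w,s_n\rangle)$ defines a finitely additive set function $P_f$ on the cylinder algebra generated by $\mathcal{C}$.

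The main obstacle, and the part that actually uses the Fréchet/nuclear structure, is $\sigma$-additivity of $P_f$. I would prove it by verifying continuity at $\emptyset$: for any decreasing sequence of cylinder sets $C_k\downarrow\emptyset$, $P_f(C_k)\to 0$. The standard route uses Minlos' lemma: because $\cS(\R)$ is a nuclear Fréchet space, for every continuous seminorm $p$ on $\cS(\R)$ there is a stronger seminorm $q$ such that the inclusion of the $q$-unit ball into the $p$-unit ball is a Hilbert-Schmidt operator between the associated Hilbert completions. Combined with the continuity of $f$ at $0$ in the Fréchet topology, this Hilbert-Schmidt property forces the cylindrical measure to be tight on $\cS'(\R)$, i.e. to assign mass arbitrarily close to $1$ to sets of the form $\{\w: |\langle\w,\cdot\rangle|_{p^*}\le R\}$, which are compact in a sufficiently weak topology. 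Tightness upgrades finite additivity to $\sigma$-additivity on the cylindrical $\sigma$-algebra $\mathcal{C}$.

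Finally, uniqueness follows because the cylindrical $\sigma$-algebra $\mathcal{C}$ is generated by the $\pi$-system of cylinder sets, and two probability measures with the same characteristic functional $\int e^{i\langle\w,s\rangle}dP(\w)=f(s)$ agree on all cylinder sets by the uniqueness clause in finite-dimensional Bochner, hence agree on $\mathcal{C}$ by Dynkin's $\pi$--$\lambda$ theorem. The whole argument hinges on the nuclearity step; everything else is bookkeeping around classical Bochner and Kolmogorov extension.
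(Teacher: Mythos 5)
The paper offers no proof of this statement: it is the classical Bochner--Minlos theorem, quoted with a pointer to the standard references. Your outline reproduces the standard argument from those sources --- finite-dimensional Bochner plus Kolmogorov consistency to build the cylindrical set function, nuclearity of $\cS(\R)$ (the Hilbert--Schmidt embedding between the associated Hilbert completions, combined with continuity of $f$ at $0$) to obtain $\sigma$-additivity, and a $\pi$--$\lambda$ argument for uniqueness --- and is correct in outline, with the nuclearity step rightly identified as the only nontrivial ingredient.
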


For a proof, see e.g. \cite{MR1408433,MR751959}.\\

Hence:
%$s \in \cS(\R)$ is to be identified with an element of $L^2(\R^N)$ in the following way:  $s = \sum_{j=1}^N \alpha_j \xi_j \in \cS(\R)$ is directly seen as $s = \alpha_1\xi_1 + \cdots + \alpha_N \xi_N = (\alpha_1, \cdots, \alpha_N) \in L^2(\R^N)$ where $$\| s\|^2 := \int_{\R^N} (\alpha_1\xi_1 + \cdots + \alpha_N \xi_N)  \ov{(\alpha_1\xi_1 + \cdots + \alpha_N \xi_N)} d\xi_1 \cdots d\xi_N.$$

%HOW DOES THE FOURIER $\hat s$ ENTERS HERE?
\begin{theorem}
Let $\varphi\in ML$.  There exists a uniquely defined probability measure $P_{\varphi}$, such that
\begin{equation}
\varphi \left(- \frac{\| s \|^2}{2}\right)  = \int_{\mathcal S'({\mathbb R})}e^{i \inner{\omega,s}}dP_{\varphi}(\omega), \quad s \in \cS(\R).
\label{iso1}
\end{equation}
\end{theorem}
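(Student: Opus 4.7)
The plan is to apply the Bochner--Minlos theorem, just stated, to the function $f:\cS(\R)\to\C$ given by $f(s):=\varphi(-\|s\|^2/2)$. The class $ML$ has been designed precisely so that $f$ satisfies the three hypotheses of Bochner--Minlos (continuity, $f(0)=1$, positive definiteness of $f(s_1-s_2)$), so the proof essentially reduces to reading them off the definition in the correct order.

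First, $f(0)=\varphi(0)=1$ by condition (i) in the definition of $ML$. Second, the kernel $(s_1,s_2)\mapsto f(s_1-s_2)=\varphi(-\|s_1-s_2\|^2/2)$ on $\cS(\R)\times\cS(\R)$ is positive definite; this is exactly condition (iii), since calling a function on a vector space positive definite is the standard shorthand for positive definiteness of the associated translation-invariant kernel. Third, continuity in the Fr\'echet topology: $\varphi$ is entire, hence continuous on $\C$, and $s\mapsto\|s\|^2$ is continuous from $\mathbf L^2(\R,dx)$ to $\R$; composing with the continuous inclusion $\cS(\R)\hookrightarrow\mathbf L^2(\R,dx)$ gives continuity of $f$ in the $\mathbf L^2$-topology, and therefore also in the stronger Fr\'echet topology on $\cS(\R)$. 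This is the remark recorded immediately after the definition of $ML$.

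With all three hypotheses verified, the Bochner--Minlos theorem delivers a unique probability measure $P_\varphi$ on the cylindrical sigma-algebra of $\cS'(\R)$ whose characteristic functional is $f$, which is the identity \eqref{iso1}.

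There is no genuine obstacle in the proof itself; the class $ML$ was engineered so that the theorem is an immediate corollary. The substantive work this statement hides is the construction or verification of specific non-Gaussian members of $ML$: showing, for instance, that $E_\alpha(-\|s\|^2/2)$ is positive definite on $\cS(\R)$ (the grey-noise case) is a nontrivial positive-definiteness problem which must be handled separately and is what ultimately gives the theorem its content beyond the Gaussian example $\varphi(z)=e^z$.
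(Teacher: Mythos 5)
Your proof is correct and is exactly the paper's argument: the paper states the Bochner--Minlos theorem and then derives this result with a single ``Hence,'' relying on condition (i) of the definition of $ML$ for $f(0)=1$, condition (iii) for positive definiteness of the kernel $f(s_1-s_2)$, and the remark following the definition for Fr\'echet continuity. Your closing observation that the real content lies in verifying membership in $ML$ for specific non-Gaussian examples is also consistent with how the paper allocates its effort.
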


In the proof of the Bochner-Minlos theorem one identifies $\mathcal S^\prime(\R)$ with a subspace of $\mathbb R^{\mathbb N}$. In the following proposition
we give the expression of the restriction of $P_\varphi$ on $\mathbb R^N$, $N\in\mathbb N$. In the sequel, $\hat{f}=\mathcal F(f)$
denotes the Fourier transform of $f$:
\begin{equation}
\label{ft}
\widehat{f}(u)=\int_{\mathbb R}e^{-iux}f(x)dx
\end{equation}

\begin{lemma} \label{Lem:FiniteMeasures} Let $\varphi \in ML.$ For every $N\in \N$ let $\{\xi_1, \ldots, \xi_N \} \subset \cS(\R)$ be an orthonormal set in $\mathbf L^2(\R, dx).$ Then, there
exists a normalized measure on $\R^N$
\begin{equation}
%d\sigma_{N} (x)
d\mu_{N} (x) = (2\pi)^{-N/2} \cF[{\varphi(-\|  \cdot \|^2/2)}] (x) dx_1 \cdots dx_N,
\end{equation}
such that the random variable
\begin{equation}
\omega \mapsto (Q_{\xi_1}(\omega), \ldots, Q_{\xi_N}(\omega)=(\langle \omega,\xi_1\rangle, \ldots, \langle \omega,\xi_N \rangle)  \in \R^N
\end{equation} has distribution %$\sigma_N,$
$\mu_N$, that is
\begin{equation}
\E_\varphi[f (\inner{\omega, \xi_1}, \ldots, \inner{\omega, \xi_N})]  := (2\pi)^{-N/2} \int_{\R^N} f (x)  d\mu_{N}(x), %d\sigma_{N}(x),
\end{equation} for all %$f \in \mathbf L^1(\R^N, d\sigma_{N}),$
$f \in \mathbf L^1(\R^N, d\mu_{N}),$ and where $\E_\varphi$ is the mathematical expectation with respect to $P_\varphi$.
\end{lemma}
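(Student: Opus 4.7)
The plan is to identify the joint law of $(Q_{\xi_1},\ldots,Q_{\xi_N})$ through its characteristic function and then recover the density by Fourier inversion. The key is that orthonormality of the $\xi_k$'s lets one feed arbitrary linear combinations into the Bochner--Minlos identity \eqref{iso1} and read off a clean formula depending only on $\|t\|^2$.

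\textbf{Step 1 (characteristic function).} For $t=(t_1,\ldots,t_N)\in\R^N$ I set $s_t=\sum_{k=1}^N t_k\xi_k\in\cS(\R)$. Linearity of the duality pairing gives $\langle\omega,s_t\rangle=\sum_{k=1}^N t_k Q_{\xi_k}(\omega)=\langle t,Q(\omega)\rangle$, and orthonormality in $\mathbf L^2(\R,dx)$ yields $\|s_t\|^2=\|t\|^2$. Plugging this into \eqref{iso1} I obtain
\begin{equation*}
\E_\varphi\!\left[e^{i\langle t,Q\rangle}\right]
=\int_{\cS'(\R)} e^{i\langle\omega,s_t\rangle}\,dP_\varphi(\omega)
=\varphi\!\left(-\tfrac{\|t\|^2}{2}\right),\qquad t\in\R^N.
\end{equation*}

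\textbf{Step 2 (Fourier inversion).} The function $t\mapsto\varphi(-\|t\|^2/2)$ is continuous on $\R^N$, and by Bochner's theorem it is the characteristic function of a unique Borel probability measure $\nu_N$ on $\R^N$. Applying the inverse Fourier transform (with the convention \eqref{ft}), $\nu_N$ has ``density'' $(2\pi)^{-N}\,\cF[\varphi(-\|\cdot\|^2/2)]$, which after the normalization built into the statement gives the asserted $d\mu_N$. Since the push-forward of $P_\varphi$ under the random vector $Q=(Q_{\xi_1},\ldots,Q_{\xi_N})$ is a probability measure on $\R^N$ with the same characteristic function as $\nu_N$, the two coincide, and the change-of-variables formula yields the claimed expectation identity for every $f\in\mathbf L^1(\R^N,d\mu_N)$.

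\textbf{Main obstacle.} The technical point is that $\varphi(-\|t\|^2/2)$ need not be Lebesgue-integrable (for $\varphi=e^z$ it is Schwartz, but for a general $\varphi\in ML$---e.g. the Mittag-Leffler function---it is only bounded and continuous, cf. \eqref{sauce-bechaeml}). Hence the Fourier transform $\cF[\varphi(-\|\cdot\|^2/2)]$ must in general be interpreted in the space of tempered distributions $\cS'(\R^N)$. One must therefore verify that this tempered distribution is in fact a positive Radon measure; this follows \emph{a posteriori} from Bochner's theorem, which guarantees that $\nu_N$ is a genuine probability measure and thus provides the meaning of the symbolic formula for $d\mu_N$. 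Care must also be taken with the powers of $2\pi$ arising from the Fourier convention in \eqref{ft}, which accounts for the prefactor in the statement.
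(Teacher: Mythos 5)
Your proof is correct and follows essentially the same route as the paper: both hinge on evaluating $\E_\varphi[e^{i\langle\omega,\sum_k t_k\xi_k\rangle}]=\varphi(-\|t\|^2/2)$ via \eqref{iso1} together with orthonormality, and then recovering $\mu_N$ by Fourier inversion (the paper tests against $f\in C_0^\infty(\R^N)$ and extends by density, while you invoke Bochner's theorem and uniqueness of characteristic functions, which is only a repackaging of the same computation). Your observation that $\cF[\varphi(-\|\cdot\|^2/2)]$ must a priori be read as a tempered distribution and is a positive measure only because it is the Fourier transform of a positive definite function is a point the paper also makes, though only in a remark following its proof.
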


\begin{proof} We follow arguments from \cite{MR1408433}. Let $f \in C^{\infty}_0(\R^N).$ Then,
\begin{gather*}
f (\inner{\cdot, \xi_1}, \ldots, \inner{\cdot, \xi_N}) = (2\pi)^{-N/2} \int_{\R^N} \hat f (y) e^{i ((\inner{\cdot, \xi_1}, \ldots, \inner{\cdot, \xi_N})| (y_1, \ldots, y_N ))} dy \\
= (2\pi)^{-N/2} \int_{\R^N} \hat f (y) e^{i \inner{\cdot, \sum_{j=1}^N y_j \xi_j }} dy,
\end{gather*}
where $(\cdot\,|\, \cdot)$ denotes the usual inner product in $\R^N$.
%and  $\hat{f}=\mathcaf F(f)$.
Hence,
\begin{eqnarray*}
\E_\varphi[f (\inner{\cdot, \xi_1}, \ldots, \inner{\cdot, \xi_N})] &=& (2\pi)^{-N/2} \int_{\R^N} \hat f (y) \E_\varphi [e^{i \inner{\cdot, \sum_{j=1}^N y_j \xi_j}}] dy \\
&=& (2\pi)^{-N/2} \int_{\R^N} \hat f (y) \varphi(-\| y \|^2/2) dy \\
&=& (2\pi)^{-N} \int_{\R^N} \left( \int_{\R^N} f (x) e^{-i\inner{x,y}} dx \right) \varphi(-\| y \|^2/2) dy \\
&=& (2\pi)^{-N/2} \int_{\R^N} f (x)  \underbrace{(2\pi)^{-N/2} \left( \int_{\R^N} e^{-i\inner{x,y}} \varphi(-\| y \|^2/2) dy \right) dx}_{= d\mu_{N}(x)} \\  %d\sigma_{N}(x)}  \\
&=& (2\pi)^{-N/2} \int_{\R^N} f (x) \cF[{\varphi(-\| \cdot \|^2/2)}] (x) dx
\end{eqnarray*}

A density argument carries the result from $C^{\infty}_0(\R^N)$ to $\mathbf L^1(\R^N, d\mu_N).$ %$\mathbf L^1(\R^N, d\sigma_N).$
\end{proof}

We remark that %$d\sigma_N$
$d\mu_N$ in the above arguments is indeed a positive measure since it is the Fourier transform of a positive definite function.
\section{The moments}
\setcounter{equation}{0}
\label{sec3}
While the Bochner-Minlos theorem provides us with the probability measure $P_\varphi$ for the study of stochastic processes and makes the link with the corresponding analytic Fock space we need the
series expansion (also known as Wiener-It\^o expansion) and this means we need to obtain the corresponding orthogonal polynomials. To this end we are going to study the corresponding moment problem which naturally starts with the computation of the various moments associated to $P_\varphi$.

Since our function $\varphi$ is entire we can consider its Laplace transform~\cite{ADKS1996}
$$
L(s)=\int_{\cS'(\R)}e^{\inner{\omega, s}}dP_\varphi(\omega), \quad s \in \cS(\R),
$$
and the $P_\varphi$-exponentials $e(s, \omega)=\frac{e^{\inner{\omega, s}}}{L(s)}$. This set is a total set in $\mathbf L^2(\mathcal S^\prime(\R),\mathcal C, dP_\varphi)$. Moreover, we also have the existence of all moments~\cite{ADKS1996}. In the special case that $P_\varphi$ is a Mittag-Leffler measure the existence of the Laplace transform was proven in~\cite{MR3315581}.

%%%%%%%%%%%%%%%%%%
\subsection{Moment problem} Given $\varphi \in ML$, the moments can be computed from
$$1-\varphi_1 \frac{\| s \|^2}{2} +\varphi_2 \frac{\| s \|^4}{4} + \cdots = \int_{\cS'(\R)} \left( 1+i \inner{\omega, s}- \frac{\inner{\omega, s}^2}{2!}
  - \cdots  \right) dP_\varphi(\omega).$$
Replacing $s$ by $ts$ with $t\in\mathbb R$ we can rewrite this equality as
\[
1-t^2\varphi_1 \frac{\| s \|^2}{2} +t^4\varphi_2 \frac{\| s \|^4}{4} + \cdots = \int_{\cS'(\R)} \left( 1+it \inner{\omega, s}- t^2\frac{\inner{\omega, s}^2}{2!}
  - \cdots  \right) dP_\varphi(\omega).
\]
It follows that
\begin{equation} \label{isometry}
\varphi_1\langle s_1,s_2\rangle_2=\langle Q_{s_1},Q_{s_2}\rangle_{P_\varphi},\quad s_1,s_2\in\mathcal S({\mathbb R}).
\end{equation}
where the first inner product is the one of $\mathbf L^2(\mathbb R,dx)$.\smallskip

\begin{remark}
\label{r567}
  {\rm
 Equality \eqref{isometry} implies that the map $s\mapsto\frac{1}{\sqrt{\varphi_1}} Q_s$ can be extended in an isometric way to the whole Lebesgue space $\mathbf L^2(\mathbb R,dx)$. We denote by the same
 symbol the resulting map. This isometry allows to adapt much of the analysis of the papers
 \cite{aal2,aal3,ajnfao} to the present non-Gaussian setting.
 The function
 \begin{equation}
   \label{qerty123}
 X^\varphi_t(\w)=\frac{1}{\sqrt{\varphi_1}}Q_{1_{[0,t]}}(\w),\quad t\in\mathbb R
 \end{equation}
 will be called the associated stochastic process. It corresponds to a construction of the Brownian motion when $\varphi(z)=e^z$. We note that
 \begin{equation}
 \mathbb E_\varphi(X^\varphi_t\overline{X^\varphi_s})=t\wedge s,
\end{equation}
but we remark that it will not be Gaussian in general.
This process and its derivative will be studied in Section \ref{stochas}.
  }
\end{remark}
In the Gaussian case, the first two moments are enough to get all the moments.
Here, explicit computations are needed to obtain all of them. Before starting these calculations we would like to point out that a formula for the moments is already given in \cite{ADKS1996}
$$\int_{\cS'(\R)} Q_{s_1}(\omega) \cdots Q_{s_r}(\omega) dP_\varphi(\omega) = \nabla_{s_1} \cdots \nabla_{s_r} L(s) |_{s=0}, $$
where $\nabla_s$ denotes the directional derivative with respect to $s.$ Since this calculation will involve the determination of the coefficients of series expansion of $\varphi,$ which we already know, we prefer a more direct approach using a direct comparison of their coefficients.

We now present the moments required for the computation of an orthogonal basis of $\mathbf L^2(\mathcal S'({\mathbb R}),\mathcal C,dP_\varphi)$.
In the following, $\un\gamma=(\gamma_1,\gamma_2,\ldots)$ denotes a multi-index in $\mathbb N$ and $\un s=(s_1,s_2,\ldots)$ denotes a sequence of $s_i\in\mathcal S(\R)$.

\begin{theorem}
The moments $\int_{\cS'(\R)}  Q_{\un s}^{\un \gamma}(\omega) d P_\varphi (\omega) = \int_{\cS'(\R)} \prod_j  Q_{s_j}^{\gamma_j}(\omega) d P_\varphi (\omega)$ are given by
\begin{gather}
\int_{\cS'(\R)}  Q_{\un s}^{\un \gamma}(\omega) d P_\varphi (\omega) = 0 \mbox{ if } | \un \gamma | \mbox{ is odd,} \label{Comparison5a}
\end{gather} and if $| \un \gamma |=2n$ is even, then
\begin{gather}
\int_{\cS'(\R)}  Q_{\un s}^{\un \gamma}(\omega) d P_\varphi (\omega) =   \frac{(2n)! \varphi_n  \left( \sum_{|\un{\un \beta} |=n} {n\choose{\un{\un \beta}}}   \prod_{1\leq i \leq  j}   2^{\beta^0_{i,j}(1-\delta_{i,j})} \inner{ s_{i}, s_{j}}_{2}^{\beta^0_{i,j}} \right)}{\sum_{|\un{\un \beta} |=n} {n\choose{\un{\un \beta}}}    \prod_{1\leq i \leq  j}   2^{\beta^0_{i,j}(1-\delta_{i,j})}},
\end{gather}  whereas $\un{\un \beta} = (\beta_{i,j}^0)$ is the solution of the Diophantine system
\begin{gather}
\gamma_j = 2\beta_{j,j} + \sum_{i \not= j} \beta_{i,j},\quad j=1,2, \ldots   \label{Diophantine2}
\end{gather}
\end{theorem}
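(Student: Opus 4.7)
The plan is to identify all moments by comparing two formal power-series expansions of the Bochner--Minlos identity \eqref{iso1}, once the test function $s$ has been replaced by the finite linear combination $\sum_j t_j s_j$ with parameters $t_j\in\R$. Term-wise integration against $P_\varphi$ is legitimate since, as recalled at the beginning of this section, all polynomial moments of $P_\varphi$ are finite (through the Laplace transform $L$).

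On the right-hand side, expanding the exponential and invoking the multinomial theorem gives
\[
\int_{\cS'(\R)}e^{i\sum_j t_j Q_{s_j}(\omega)}\,dP_\varphi(\omega)=\sum_{\un\gamma}\frac{i^{|\un\gamma|}}{\un\gamma!}\,\un t^{\,\un\gamma}\int_{\cS'(\R)}Q^{\un\gamma}_{\un s}(\omega)\,dP_\varphi(\omega),
\]
so the coefficient of $\un t^{\,\un\gamma}$ equals $i^{|\un\gamma|}/\un\gamma!$ times the sought moment. On the left-hand side, substitute the series $\varphi(z)=\sum_n\varphi_n z^n$ and rewrite the quadratic form $\|\sum_j t_j s_j\|^2$ as $\sum_{1\le i\le j}2^{1-\delta_{i,j}}\langle s_i,s_j\rangle_2\,t_it_j$. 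Applying the multinomial theorem to its $n$-th power then produces a sum over families $\un{\un\beta}=(\beta_{i,j})_{1\le i\le j}$ of non-negative integers with $|\un{\un\beta}|=n$, each term carrying the factor $\binom{n}{\un{\un\beta}}\prod_{i\le j}2^{(1-\delta_{i,j})\beta_{i,j}}\langle s_i,s_j\rangle_2^{\beta_{i,j}}$ together with the monomial $\prod_j t_j^{\,2\beta_{j,j}+\sum_{i\ne j}\beta_{i,j}}$.

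Matching the $\un t^{\,\un\gamma}$ coefficient on both sides then forces exactly the Diophantine conditions \eqref{Diophantine2}. In particular every nonzero term on the left-hand side has even total degree $2n=|\un\gamma|$, which immediately gives the vanishing \eqref{Comparison5a} for $|\un\gamma|$ odd. For $|\un\gamma|=2n$ the resulting identity,
\[
\varphi_n(-1/2)^n\!\sum_{\substack{|\un{\un\beta}|=n\\ \text{solving }\eqref{Diophantine2}}}\!\!\binom{n}{\un{\un\beta}}\prod_{i\le j}2^{(1-\delta_{i,j})\beta_{i,j}}\langle s_i,s_j\rangle_2^{\beta_{i,j}}=\frac{(-1)^n}{\un\gamma!}\int_{\cS'(\R)}Q^{\un\gamma}_{\un s}\,dP_\varphi,
\]
yields the moment after cancelling the common $(-1)^n$ and solving for the integral; the denominator displayed in the statement appears as the normalising constant obtained by specialising $\langle s_i,s_j\rangle_2=1$, or equivalently as a pair-partition count in the spirit of the Wick/Isserlis formula.

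The main technical challenge is the combinatorial bookkeeping: translating the multinomial index $\un{\un\beta}$ into the Diophantine system via the exponents of $t_j$, correctly handling the off-diagonal symmetry factor $2^{1-\delta_{i,j}}$, and identifying the denominator in the statement as the pair-partition normaliser that rewrites the prefactor arising from the direct coefficient comparison in the more symmetric $(2n)!\,\varphi_n$ form.
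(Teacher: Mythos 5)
Your proposal is correct and follows essentially the same route as the paper: substitute $s\mapsto\sum_j t_js_j$ in the Bochner--Minlos identity, expand both sides as power series in $\un t$, and match the coefficients of $\un t^{\,\un\gamma}$, with the Diophantine system recording which multinomial indices $\un{\un\beta}$ contribute to a given $\un\gamma$ and the odd-degree vanishing coming from the absence of odd powers of $\un t$ on the left. The only cosmetic differences are that you expand the right-hand side directly by the multinomial theorem (coefficient $i^{|\un\gamma|}/\un\gamma!$) where the paper squares first and re-expands over $\un{\un\beta}$, so that the stated denominator appears there automatically and is identified, as you note, with $(2n)!/\un\gamma!$, and that you keep the factor $(-1/2)^n$ explicit, which the paper's own derivation silently absorbs into $\varphi_n$.
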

\begin{proof}

Let us now replace $s$ by a linear combination $t_1s_1+t_2s_2+\ldots$ in the equation for the moments. Then we have
$$
\sum_{n=0}^\infty (-1)^n \varphi_n\left(\int_{\R}(\sum_i t_i s_i)^2 dx \right)^n  =\sum_{m=0}^\infty \frac{i^m}{m!}\int_{\mathcal S'(\R)}\left(\sum_j t_j Q_{s_j}\right)^m dP_\varphi(\omega).
$$
We have
\begin{gather}\label{rhs0}
\left(\sum_i t_i s_i\right)^2  = \left(t_1 s_1 + t_2s_2+t_3 s_3+ \cdots\right)^2 = \sum_{|\un \alpha |=2} {{2}\choose{\un \alpha}} (\un{ts})^{\un \alpha},
\end{gather} where $\un \alpha = (\alpha_1, \alpha_2, \ldots), \un{ts} = (t_1s_1, t_2 s_2, \ldots ), (\un{ts})^{\un \alpha} := \prod_{j=1}^\infty (t_j s_j)^{\alpha_j},$ and ${{2}\choose{\un \alpha}}:= \frac{2!}{\alpha_1 ! \alpha_2! \cdots},$
so that
\begin{gather}
\sum_{n=0}^\infty (-1)^n \varphi_n\left( \int_{\mathbb{R}}\left(\sum_i t_i s_i\right)^2 dx \right)^n  = \sum_{n=0}^\infty (-1)^n \varphi_n\left( \sum_{|\un \alpha |=2} {{2}\choose{\un \alpha}} \int_{\mathbb{R}}(\un{ts})^{\un \alpha}dx \right)^n \nonumber  \\
 = \sum_{n=0}^\infty (-1)^n \varphi_n\left( \sum_{|\un \alpha |=2} {{2}\choose{\un \alpha}}\int_{\mathbb{R}}  \prod_{j=1}^\infty t_j^{\alpha_j} s_j^{\alpha_j}dx \right)^n   \nonumber \\
  = \sum_{n=0}^\infty (-1)^n \varphi_n\left(   {{2}\choose{\un \alpha^1}} \int_{\mathbb{R}} \prod_{j=1}^\infty t_j^{\alpha^1_j} s_j^{\alpha^1_j}dx + {{2}\choose{\un \alpha^2}} \int_{\mathbb{R}} \prod_{j=1}^\infty t_j^{\alpha^2_j} s_j^{\alpha^2_j}dx + {{2}\choose{\un \alpha^3}} \int_{\mathbb{R}} \prod_{j=1}^\infty t_j^{\alpha^3_j}  s_j^{\alpha^3_j}dx +  \cdots\right)^n   \label{rhs1}
\end{gather} where we recall, the sequences $\un \alpha^k$ satisfy $|\un \alpha^k|=2, k=1,2, 3, \ldots.$
Now, we evaluate the factors $ {{2}\choose{\un \alpha^k}} \int_{\mathbb{R}} (\prod_{j=1}^\infty t_j^{\alpha^k_j}  s_j^{\alpha^k_j})dx.$ As the sequences $\un \alpha^k$ satisfy $|\un \alpha^k|=2$ we have only two possible types:
\begin{itemize}
\item Type 1: $\un \alpha^k = (0,\ldots, 0, 2, 0, \ldots)$ with $2$ at the $j_0$ position. In this case,
$${{2}\choose{\un \alpha^k}} \int_{\mathbb{R}} (\prod_{j=1}^\infty t_j^{\alpha^k_j} s_j^{\alpha^k_j} ) dx = t_{j_0}^2 \int_{\mathbb{R}}s_{j_0}^{2}dx = t_{j_0}^2 \inner{ s_{j_0}, s_{j_0}}_{2}; $$
\item Type 2: $\un \alpha^k = (0,\ldots, 0, 1, 0,\ldots, 0, 1, 0, \ldots)$ with the $1's$ at the $(j_1, j_2)$ position $(j_1 < j_2).$ In this case,
$${{2}\choose{\un \alpha^k}} \int_{\mathbb{R}} (\prod_{j=1}^\infty t_j^{\alpha^k_j} s_j^{\alpha^k_j}) dx = 2t_{j_1}t_{j_2} \int_{\mathbb{R}}s_{j_1} s_{j_2}dx = 2t_{j_1}t_{j_2} \inner{ s_{j_1}, s_{j_2}}_{2};$$
\end{itemize} so that  (\ref{rhs1}) becomes
\begin{gather}
\sum_{n=0}^\infty (-1)^n \varphi_n\left( \int_{\mathbb{R}}\left(\sum_i t_i s_i\right)^2 dx \right)^n \nonumber \\
= \sum_{n=0}^\infty (-1)^n \varphi_n\left(  \sum_{j_0=1}^\infty  t_{j_0}^2 \inner{ s_{j_0}, s_{j_0}}_{2} + 2 \sum_{1\leq j_1 < j_2}   t_{j_1}t_{j_2} \inner{ s_{j_1}, s_{j_2}}_{2} \right)^n  \nonumber \\
= \sum_{n=0}^\infty (-1)^n \varphi_n\left(   \sum_{1\leq j_1 \leq  j_2} 2^{(1-\delta_{j_1, j_2})}  t_{j_1}t_{j_2} \inner{ s_{j_1}, s_{j_2}}_{2} \right)^n.\label{rhs2}
\end{gather}

Hence, the left-hand side  is
\begin{gather}
\sum_{n=0}^\infty (-1)^n \varphi_n\left( \int_{\mathbb{R}}\left(\sum_i t_i s_i\right)^2 dx \right)^n \nonumber \\
= \sum_{n=0}^\infty (-1)^n \varphi_n \sum_{|\un \beta |=n} {n\choose{\un \beta}} \prod_{1\leq j_1 \leq  j_2} 2^{\beta_{j_1, j_2}(1-\delta_{j_1, j_2})}  (t_{j_1}t_{j_2})^{\beta_{j_1, j_2}}
\inner{ s_{j_1}, s_{j_2}}_{2}^{\beta_{j_1, j_2}} \label{rhs3}
\end{gather} where $\un \beta = (\beta_{1,1}, \beta_{1, 2}, \beta_{1, 3}, \ldots, \beta_{2,2}, \beta_{2, 3}, \ldots ).$ \\

For the right hand side we have to consider the cases where $m$ is even since the imaginary part has to be zero. Here we get
\begin{gather}
\sum_{m=0}^\infty \frac{i^m}{m!} \int_{\cS'(\R)}\left( \sum_{j=1}^\infty t_j Q_{s_j} (\omega) \right)^m d P_\varphi (\omega)
= \sum_{n=0}^\infty \frac{(-1)^n}{(2n)!} \int_{\cS'(\R)} \left( \sum_{j=1}^\infty t_j Q_{s_j} (\omega) \right)^{2n} d P_\varphi (\omega), \label{ImagPart=0}
\end{gather}
with the condition from the imaginary part
\begin{gather}
\sum_{n=0}^\infty \frac{(-1)^{n}}{(2n+1)!} \int_{\cS'(\R)} \left( \sum_{j=1}^\infty t_j Q_{s_j} (\omega) \right)^{2n+1} d P_\varphi (\omega) =0. \label{ImagPart=0B}
\end{gather}

From   (\ref{ImagPart=0B}) we obtain for  $\un \gamma = (\gamma_j)$ such that $|\un \gamma|$ is odd
\begin{gather}
\int_{\cS'(\R)}  Q_{\un s}^{\un \gamma}(\omega) d P_\varphi (\omega) = \int_{\cS'(\R)} \prod_j  Q_{s_j}^{\gamma_j}(\omega) d P_\varphi (\omega) = 0. \label{Comparison4}
\end{gather}

Now, for (\ref{ImagPart=0}) we proceed as before with
\begin{gather*}
 \int_{\cS'(\R)} \left( \sum_{j=1}^\infty t_j Q_{s_j} (\omega) \right)^{2n} d P_\varphi (\omega) = \int_{\cS'(\R)}\left[ \left( \sum_{j=1}^\infty t_j Q_{s_j} (\omega) \right)^2 \right]^{n} d P_\varphi (\omega) \\
=  \int_{\cS'(\R)} \left(   \sum_{1\leq j_1 \leq  j_2} 2^{(1-\delta_{j_1, j_2})}  t_{j_1}t_{j_2}  Q_{s_{j_1}} Q_{s_{j_2}}  (\omega) \right)^{n} d P_\varphi (\omega),
\end{gather*} so that we obtain
\begin{gather}
\sum_{n=0}^\infty \frac{(-1)^n}{(2n)!} \int_{\cS'(\R)} \left( \sum_{j=1}^\infty t_j Q_{s_j} (\omega) \right)^{2n} d P_\varphi (\omega)  \label{RHS} \\
=\sum_{n=0}^\infty \frac{(-1)^n}{(2n)!} \sum_{|\un \beta | = n} {{n}\choose{\un \beta}}   \int_{\cS'(\R)}  \prod_{1\leq j_1 \leq  j_2} 2^{\beta_{j_1, j_2}(1-\delta_{j_1, j_2})}  (t_{j_1}t_{j_2})^{\beta_{j_1, j_2}}  Q_{s_{j_1}}^{\beta_{j_1, j_2}}(\omega) Q_{s_{j_2}}^{\beta_{j_1, j_2}} (\omega) d P_\varphi (\omega), \nonumber
\end{gather} where again $\un \beta = (\beta_{1,1}, \beta_{1, 2}, \beta_{1, 3}, \ldots, \beta_{2,2}, \beta_{2, 3}, \ldots ).$ This expression allows for a direct comparison between (\ref{rhs3}) and (\ref{RHS}). Indeed, we get
\begin{gather}
(2n)! \varphi_n \sum_{|\un \beta |=n} {n\choose{\un \beta}}   \prod_{1\leq j_1 \leq  j_2}   2^{\beta_{j_1, j_2}(1-\delta_{j_1, j_2})} ( t_{j_1}t_{j_2} )^{\beta_{j_1, j_2}} \inner{ s_{j_1}, s_{j_2}}_{2}^{\beta_{j_1, j_2}} \nonumber \\
= \sum_{|\un \beta | = n} {{n}\choose{\un \beta}}    \int_{\cS'(\R)}   \prod_{1\leq j_1 \leq  j_2} 2^{\beta_{j_1, j_2}(1-\delta_{j_1, j_2})}  (t_{j_1}t_{j_2})^{\beta_{j_1, j_2}}  Q_{s_{j_1}}^{\beta_{j_1, j_2}}(\omega) Q_{s_{j_2}}^{\beta_{j_1, j_2}} (\omega) d P_\varphi (\omega), \label{Comparison1}
\end{gather} which leads to the identification between the terms $\prod_{1\leq j_1 \leq  j_2}    ( t_{j_1}t_{j_2} )^{\beta_{j_1, j_2}}$ with same exponent.

Now, we observe that we have $\beta_{i,j} = \beta_{j,i},$ so that
\begin{gather*}
\prod_{1\leq j_1 \leq  j_2}    ( t_{j_1}t_{j_2} )^{\beta_{j_1, j_2}} = \prod_{j=1}^\infty t_j^{\sum_{i<j} \beta_{i,j} + 2 \beta_{j,j} + \sum_{j<i} \beta_{j,i}} = \prod_{j=1}^\infty t_j^{(2 \beta_{j,j} + \sum_{i\not= j} \beta_{i,j}) },
\end{gather*} under the usual convention $\sum_{i<1} =0.$

Hence, the evaluation of
\begin{gather*}
\int_{\cS'(\R)}  Q_{\un s}^{\un \gamma}(\omega) d P_\varphi (\omega) = \int_{\cS'(\R)} \prod_j  Q_{s_j}^{\gamma_j}(\omega) d P_\varphi (\omega),
\end{gather*} when $|\un\gamma|$ is even, depends on the resolution of the linear Diophantine system
\begin{gather}\label{Diophantine}
\gamma_j = 2\beta_{j,j} + \sum_{i \not= j} \beta_{i,j},\quad j=1,2, \ldots
\end{gather}
 or
$$
\left\{ \begin{array}{rcl}
\gamma_1 & = &  2\beta_{1,1} + \beta_{1,2}+ \beta_{1,3}+ \cdots \\
\gamma_2 & = & \beta_{1,2} + 2\beta_{2,2}+ \beta_{2,3}+ \cdots \\
\gamma_3 & = & \beta_{1,3} + \beta_{2,3}+ 2\beta_{3,3}+ \cdots \\
 & \vdots &
\end{array} \right.
$$

Remark that in these conditions $\sum_j \gamma_j = \sum_j (2\beta_{j,j} + \sum_{i \not= j} \beta_{i,j}) = 2 \sum_{i,j}\beta_{i,j}.$ Moreover, system (\ref{Diophantine}) is always solvable although non-uniquely. Let us denote as $\un{\un \beta} = (\beta_{i, j}^0)$ the solution of (\ref{Diophantine}) for a given $\un \gamma$ such that $| \un \gamma| = 2n.$ Then (\ref{Comparison1}) becomes
\begin{gather}
(2n)! \varphi_n \sum_{|\un{\un \beta} |=n} {n\choose{\un{\un \beta}}}   \prod_{1\leq i \leq  j}   2^{\beta^0_{i,j}(1-\delta_{i,j})} ( t_{i}t_{j} )^{\beta^0_{i,j}} \inner{ s_{i}, s_{j}}_{2}^{\beta^0_{i,j}} \nonumber \\
= \sum_{|\un{\un \beta} |=n} {n\choose{\un{\un \beta}}}   \int_{\cS'(\R)}   \prod_{1\leq i \leq  j}   2^{\beta^0_{i,j}(1-\delta_{i,j})} ( t_{i}t_{j} )^{\beta^0_{i,j}}  Q_{s_{i}}^{\beta_{i,j}^0}(\omega) Q_{s_{j}}^{\beta_{i,j}^0} (\omega) d P_\varphi (\omega). \label{Comparison2}
\end{gather}
For the left-hand side we have
\begin{gather*}
(2n)! \varphi_n \sum_{|\un{\un \beta} |=n} {n\choose{\un{\un \beta}}}   \prod_{1\leq i \leq  j}   2^{\beta^0_{i,j}(1-\delta_{i,j})} ( t_{i}t_{j} )^{\beta^0_{i,j}} \inner{ s_{i}, s_{j}}_{2}^{\beta^0_{i,j}} = (2n)! \varphi_n  \left(  \sum_{|\un{\un \beta} |=n} {n\choose{\un{\un \beta}}}   \prod_{1\leq i \leq  j}   2^{\beta^0_{i,j}(1-\delta_{i,j})} \inner{ s_{i}, s_{j}}_{2}^{\beta^0_{i,j}} \right) \un t^{\un \gamma}
\end{gather*}
while for the right-hand side we get
\begin{gather*}
\sum_{|\un{\un \beta} |=n} {n\choose{\un{\un \beta}}}   \int_{\cS'(\R)}   \prod_{1\leq i \leq  j}   2^{\beta^0_{i,j}(1-\delta_{i,j})} ( t_{i}t_{j} )^{\beta^0_{i,j}}  Q_{s_{i}}^{\beta_{i,j}^0}(\omega) Q_{s_{j}}^{\beta_{i,j}^0} (\omega) d P_\varphi (\omega) \\
= \left( \sum_{|\un{\un \beta} |=n} {n\choose{\un{\un \beta}}}    \prod_{1\leq i \leq  j}   2^{\beta^0_{i,j}(1-\delta_{i,j})} \right) \un t^{\un \gamma} \int_{\cS'(\R)} Q_{\un s}^{\un \gamma} (\omega) d P_\varphi (\omega).
\end{gather*}
Hence, we obtain
\begin{gather}
 \int_{\cS'(\R)} Q_{\un s}^{\un \gamma} (\omega) d P_\varphi (\omega) = \frac{(2n)! \varphi_n  \left( \sum_{|\un{\un \beta} |=n} {n\choose{\un{\un \beta}}}   \prod_{1\leq i \leq  j}   2^{\beta^0_{i,j}(1-\delta_{i,j})} \inner{ s_{i}, s_{j}}_{2}^{\beta^0_{i,j}} \right)}{\sum_{|\un{\un \beta} |=n} {n\choose{\un{\un \beta}}}    \prod_{1\leq i \leq  j}   2^{\beta^0_{i,j}(1-\delta_{i,j})}}, \label{Comparison3a}
\end{gather}
as desired.
\end{proof}

For example, let us compute $$ \int_{\cS'(\R)}   Q_{s_{1}}^{2}(\omega) Q_{s_{2}}^{2} (\omega) d P_\varphi (\omega),$$ that is, $\gamma_1=\gamma_2=2,$ and $\gamma_j =0, ~j\geq 3.$ The Diophantine system becomes
$$
\left\{ \begin{array}{rcl}
2 & = &  2\beta_{1,1} + \beta_{1,2}+ \beta_{1,3}+ \cdots \\
2 & = & \beta_{1,2} + 2\beta_{2,2}+ \beta_{2,3}+ \cdots \\
0 & = & \beta_{1,j} + \beta_{2,j}+ 2\beta_{j,j}+ \cdots , \quad j\geq 3,
\end{array} \right.
$$ and which has the (only) two obvious solutions $$\beta_{1,1}=\beta_{2,2}=1, \quad \mbox{other } \beta_{i,j}=0,$$ or
$$\beta_{1,2}=2, \quad \mbox{other } \beta_{i,j}=0.$$
The numerator is (recall, $n=2$)
\begin{gather*}
(2n)! \varphi_n  \left( \sum_{|\un{\un \beta} |=n} {n\choose{\un{\un \beta}}}   \prod_{1\leq i \leq  j}   2^{\beta^0_{i,j}(1-\delta_{i,j})} \inner{ s_{i}, s_{j}}_{2}^{\beta^0_{i,j}} \right) \\
= 4! \varphi_2  \left(  {2\choose{1,1}}  \inner{ s_{1}, s_{1}}_{2}\inner{ s_{2}, s_{2}}_{2} +{2\choose{2}}  2^2 \inner{ s_{1}, s_{2}}_{2} \right)\\
= 4! \varphi_2  \left(  2 \inner{ s_{1}, s_{1}}_{2}\inner{ s_{2}, s_{2}}_{L_2} + 4\inner{ s_{1}, s_{2}}_{2} \right),
\end{gather*} whereas the denominator is
\begin{gather*}
 \left( \sum_{|\un{\un \beta} |=n} {n\choose{\un{\un \beta}}}   \prod_{1\leq i \leq  j}   2^{\beta^0_{i,j}(1-\delta_{i,j})}  \right) =  \left(  {2\choose{1,1}}   +{2\choose{2}}  2^2  \right) = 6.
\end{gather*} This leads to
\begin{gather*}
 \int_{\cS'(\R)}   Q_{s_{1}}^{2}(\omega) Q_{s_{2}}^{2} (\omega) d P_\varphi (\omega) =\frac{4! \varphi_2}{6}   \left[ 2 \inner{s_1, s_1}_{2}\inner{s_2, s_2}_{2}+   4 \inner{ s_1, s_2}_{2}^2\right].
\end{gather*}

In particular, for $j=j_1 = j_2$ and $\beta_{j,j}= n$ one obtains
\begin{gather}
\int_{\cS'(\R)}  Q_{s_{j}}^{2n}(\omega) d P_\varphi (\omega) = (2n)! \varphi_n  \inner{s_{j}, s_{j}}_{2}^{n}. \label{Comparison3}
\end{gather}

\begin{corol}
If $\{ s_j, j \in \R \}$ is an o.n. basis for $\mathbf L^2(\R, dx)$ the following statements hold:
\begin{enumerate}
\item $\inner{Q_{s_i}, Q_{s_j}}_{P_\varphi} =0$ for all $i \not= j;$
\item $\int_{\cS'(\R)}  Q_{\un s}^{2\un \beta}(\omega) d P_\varphi (\omega) = (2n)! \varphi_n  \prod_{j=1}^\infty \inner{s_j, s_j}^{\beta_j}_{2},$ where $\un \beta = (\beta_1, \beta_2, \ldots)$ satisfy $|\un \beta|= n.$
\end{enumerate}
\end{corol}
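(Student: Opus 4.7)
My plan is to deduce both parts from results already established in Section~\ref{sec3}, not from scratch.

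For part (1), I would invoke the isometry \eqref{isometry}, which asserts
\[
\inner{Q_{s_1},Q_{s_2}}_{P_\varphi}=\varphi_1 \inner{s_1,s_2}_2,\qquad s_1,s_2\in\cS(\R).
\]
Specializing to $s_1=s_i$, $s_2=s_j$ with $i\neq j$ and using $\inner{s_i,s_j}_2=0$ from orthonormality yields the result immediately.

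For part (2), I would specialize the main moment theorem to the even multi-index $\un\gamma=2\un\beta$, so that $|\un\gamma|=2n$ with $n=|\un\beta|$, and use orthogonality to collapse the numerator sum over Diophantine solutions. The key observation is that $\inner{s_i,s_j}_2=\delta_{i,j}$, so in the numerator any Diophantine solution $\un{\un\beta}^0=(\beta^0_{i,j})$ with $\beta^0_{i,j}>0$ for some $i<j$ contributes a zero factor $\inner{s_i,s_j}^{\beta^0_{i,j}}_2=0$. Hence only the ``diagonal'' Diophantine solution, namely $\beta^0_{j,j}=\beta_j$ and $\beta^0_{i,j}=0$ for $i\neq j$, survives (this choice satisfies \eqref{Diophantine2} with $\gamma_j=2\beta_j$, and uniqueness among diagonal solutions is forced by the equations). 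For this surviving solution the multinomial $\binom{n}{\un{\un\beta}^0}$ reduces to $\binom{n}{\beta_1,\beta_2,\ldots}$, the powers of $2$ reduce to $1$ (the diagonal contributions have $1-\delta_{j,j}=0$ and the off-diagonal $\beta^0$'s vanish), and the product of inner products collapses to $\prod_j \inner{s_j,s_j}^{\beta_j}_2$.

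The step I expect to be the main obstacle is handling the denominator, since it is a combinatorial sum over \emph{all} Diophantine solutions (not just the diagonal one) and carries no inner-product factors to kill the off-diagonal terms. Making the ratio $N/D$ collapse to the stated expression reduces to a combinatorial identity isolating the diagonal solution's weight. I would address this by re-running the coefficient comparison of the proof of the moment theorem under the orthonormality assumption imposed from the start: the quadratic form $\|\sum_i t_i s_i\|_2^2$ simplifies to $\sum_j t_j^2 \inner{s_j,s_j}_2$ at the outset, so raising to the $n$-th power and matching with the expectation of $\bigl(\sum_j t_j Q_{s_j}\bigr)^{2n}$ bypasses the general Diophantine sum entirely and reduces the calculation to an application of \eqref{Comparison3} in each diagonal slot, which is exactly the assertion. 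This bypass is cleaner than trying to prove the combinatorial identity $D=\binom{n}{\un\beta}$ directly.
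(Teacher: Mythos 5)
Part (1) of your proposal is exactly the paper's argument (a direct appeal to \eqref{isometry}); nothing to add. For part (2), your opening move --- orthonormality forces $\inner{s_i,s_j}_2=0$ for $i\neq j$, so every Diophantine solution with an off-diagonal $\beta^0_{i,j}\neq 0$ contributes a vanishing factor to the numerator, leaving only the diagonal solution $\beta^0_{j,j}=\beta_j$ --- is precisely what the paper does. Your instinct that the denominator is the crux is also correct, and it is exactly where the paper's own proof is weakest: the paper silently replaces the denominator of \eqref{Comparison3a} by the weight $\binom{n}{\un\beta}$ of the diagonal solution alone, whereas that denominator is a sum over \emph{all} solutions of \eqref{Diophantine} and carries no inner products to kill the off-diagonal ones.

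The genuine gap in your proposal is the final sentence, where you assert without computation that re-running the coefficient comparison under orthonormality lands on ``exactly the assertion''. It does not. With $\|\sum_i t_i s_i\|_2^2=\sum_j t_j^2$, matching the coefficient of $\un t^{2\un\beta}$ between $\varphi_n\bigl(\sum_j t_j^2\bigr)^n$ and $\frac{1}{(2n)!}\int\bigl(\sum_j t_j Q_{s_j}\bigr)^{2n}dP_\varphi$ gives
\[
\int_{\cS'(\R)}Q_{\un s}^{2\un\beta}(\omega)\,dP_\varphi(\omega)
=(2n)!\,\varphi_n\,\binom{n}{\un\beta}\Big/\binom{2n}{2\un\beta}
=n!\,\varphi_n\prod_{j}\frac{(2\beta_j)!}{\beta_j!},
\]
which coincides with $(2n)!\,\varphi_n$ only when a single $\beta_j$ is nonzero, i.e.\ in the case \eqref{Comparison3}. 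For $\un\beta=(1,1,0,\ldots)$ your bypass yields $8\varphi_2$, in agreement with the worked example immediately preceding the corollary (set $\inner{s_1,s_2}_2=0$ and $\inner{s_j,s_j}_2=1$ there) but not with the corollary's claimed $4!\,\varphi_2=24\varphi_2$. A normalization-free sanity check points the same way: the right-hand side of statement (2) depends on $\un\beta$ only through $n=|\un\beta|$, yet in the Gaussian case the $Q_{s_j}$ are i.i.d.\ and $\E[Q_{s_1}^2Q_{s_2}^2]=(\E[Q_{s_1}^2])^2\neq\E[Q_{s_1}^4]$. So your method is sound --- and cleaner than the paper's --- but executing it disproves statement (2) as written rather than proving it; the formula it actually establishes is $n!\,\varphi_n\prod_j\frac{(2\beta_j)!}{\beta_j!}\inner{s_j,s_j}_2^{\beta_j}$.
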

\begin{proof} Statement \textit{(1)} is a direct consequence of (\ref{isometry}).

For \textit{(2)} we remark that all solutions $\un{\un \beta}$ of the Diophantine system (\ref{Diophantine}) in which $\beta_{i,j}^0 \not= 0$ for some $i\not=j$ corresponds to terms $\inner{s_i, s_j}_{2}=0.$ Thus only solutions $\un{\un \beta}$ in which $\beta^0_{i,j} = 0$ for all $i\not=j$ are admissible. This implies a unique solution $\un{\un \beta}$ for the system, $\gamma_j = 2\beta^0_{j,j} := 2\beta_j, ~ j=1,2,\ldots,$ and, for $n= |\un \beta|,$ we obtain
\begin{gather*}
\int_{\cS'(\R)}  Q_{\un s}^{2\un \beta}(\omega) d P_\varphi (\omega) = \int_{\cS'(\R)} \prod_{j=1}^\infty  Q_{s_j}^{2\beta_j}(\omega) d P_\varphi (\omega)  = (2n)! \varphi_n  \frac{ {n\choose{\beta}} \prod_{j=1}^\infty \inner{ s_{j}, s_{j}}_{2}^{\beta_j}}{ {n\choose{\beta}}}  \\
= (2n)! \varphi_n    \prod_{j=1}^\infty \inner{ s_{j}, s_{j}}_{2}^{\beta_j}.
\end{gather*}
\end{proof}

\begin{example}
As an example, the classic Gaussian case ($\varphi_n=\frac{1}{n!2^n}$ which corresponds to $\varphi(z)=e^z$) gives us

$$\int_{\cS'(\R)}  Q_{s}^{\gamma}(\omega)dP_\varphi(\omega)=\left\{
\begin{array}{cc}
0, & \gamma=2n+1\\
& \\
\frac{(2n)!}{n! 2^n}\| s \|^{\gamma/2},
& \gamma= 2n
\end{array}
\right..$$
The value for the even terms can be rewritten as $\frac{(2n)!}{n!2^n}= (2n-1)!!$ so that we get the moments of the Gaussian measure which generate the Hermite polynomials as orthogonal polynomials.
\end{example}

In~\cite{ADKS1996} the authors use the so-called $P_\varphi$-exponentials to construct polynomials which form a total set. In our case these $P_\varphi$-exponentials have the expression
\begin{eqnarray*}
% \nonumber % Remove numbering (before each equation)
  e(s,\omega) &=& \frac{e^{\inner{\omega, s}}}{L(s)} \\
  &=& \sum_{n=0}^\infty \sum_{k=0}^{\floor{n/2}}\frac{1}{(n-k)!}\left(\sum_{\alpha_0,\alpha_2,\alpha_4,\ldots: \sum i\alpha_{2i}=k}(-1)^{\sum_i \alpha_{2i}}\binom{\sum_i\alpha_{2i}}{\alpha_2,\alpha_4,\ldots}\Pi_i\varphi_i^{\alpha_{2i}}\right){\inner{s, s}_2^{k}} Q_{s}^{{n-2k} }(\omega).
\end{eqnarray*}
In particular, we can get the expansion
$$
e(ts,\omega)=\sum_{n=0}^\infty \frac{t^n}{n!} \sum_{k=0}^{\floor{n/2}}\frac{n!}{(n-k)!}\left(\sum_{\alpha_0,\alpha_2,\alpha_4,\ldots: \sum i\alpha_{2i}=k}(-1)^{\sum_i \alpha_{2i}}\binom{\sum_i\alpha_{2i}}{\alpha_0,\alpha_2,\alpha_4,\ldots}\Pi_i\varphi_i^{\alpha_{2i}}\right){\inner{s, s}}_2^{k}Q_{s}^{{n-2k} }(\omega)
$$
where $s\in \cS(\R), \omega\in \cS'(\R)$. Like in~\cite{ADKS1996} this allows us to introduce the polynomials
\begin{eqnarray*}
P(s_1,\ldots,s_n;\omega) & = & \sum_{k=0}^{\floor{n/2}}\frac{1}{(n-k)!}\sum_{(j_1,\ldots,j_n)}\left(\sum_{\alpha_2,\alpha_4,\ldots: \sum i\alpha_{2i}=k}(-1)^{\sum_i \alpha_{2i}}\binom{\sum_i\alpha_{2i}}{\alpha_0,\alpha_2,\alpha_4,\ldots}\Pi_i\varphi_i^{\alpha_{2i}}\right)\\ & & \quad \times { \inner{ s_{j_1} , s_{j_2} }}_2 \cdots {\inner{ s_{j_{2k-1}}, s_{j_{2k}} }_2 }Q_{s_{j_{2k+1}}}(\omega)\cdots Q_{s_{j_n}}(\omega),
\end{eqnarray*} where $\binom{n}{n_0,n_2,n_4,\ldots} = \frac{n!}{n_0! n_2 ! n_4! \cdots}.$
These polynomials satisfy the formulae (straightforward calculation)
$$
P(s_1,\ldots,s_n;\omega)=\sum_{k=1}^n\sum_{(j_1,\ldots,j_n)} Q_{s_{j_1}}(\omega)\cdots Q_{s_{j_k}}(\omega)P(s_{j_{k+1}},\ldots,s_{j_n},0)
$$
and
$$
Q_{s_1}(\omega)\cdots Q_{s_{n}}(\omega)=\sum_{k=1}^n\sum_{(j_1,\ldots,j_n)} M_k(s_{j_1},\ldots,s_{j_k})P(s_{j_{k+1}},\ldots,s_{j_n};\omega),
$$
where $M_k$ denotes the above calculated moments generated by $Q_{\un s}^{\un \gamma}(\omega)=Q_{s_{j_1}}(\omega) \cdots Q_{s_{j_k}}(\omega)$. In the same way as in~\cite{ADKS1996} we can get now the polynomials $P_n(\omega)$ by tensoring up and symmetrization. If we denote by
$s^{(n)}=\sum c_{j_1\ldots j_n}s_{j_1}\otimes\cdots\otimes s_{j_n}$ we can introduce the polynomials
$$
P_n(s^{(n)};\omega)=\sum c_{j_1\ldots j_n} P(s_{j_1},\ldots,s_{j_n};\omega)=\inner{s^{(n)},P_n(\omega)}.
$$
Furthermore, if we consider the symmetrization $sym(s^{(n)})=\frac{1}{n!}\sum s_{j_1}\otimes\cdots\otimes s_{j_n}$ we have $P_n(s^{(n)};\omega)=P_n(sym(s^{(n)});\omega)$. This allows us to consider the term $s^{\otimes n}=s\otimes\cdots\otimes s$ so that we have $P(s^{\otimes n};\omega)=\inner{s^{\otimes n}, P_n(\omega)}$. These polynomials now form a total set in $\mathbf L^2(\mathcal S^\prime(\R),\mathcal C, dP_\varphi)$.

Of course, this also allows us to give more explicit expressions for the so-called Appell system $\{P_n, Q_n\}$ constructed in~\cite{ADKS1996}.

\begin{example}
For the special case where the function $\varphi$ is given by the Mittag-Leffler function $E_\alpha$ we have for the $E_\alpha$-exponentials
$$
\sum_{n=0}^\infty \sum_{k=0}^{\floor{n/2}}\frac{1}{(n-k)!}\left(\sum_{\beta_0,\beta_2,\beta_4,\ldots: \sum i\beta_{2i}=k}(-1)^{\sum_i \beta_{2i}}\binom{\sum_i\beta_{2i}}{\beta_2,\beta_4,\ldots}\Pi_i\frac{1}{\Gamma(i\alpha+1) ^{\beta_{2i}}}\right){\inner{s, s}_2}^{k} Q_{s}^{{n-2k} }(\omega)
$$
as well as for the polynomials
\begin{eqnarray*}
P(s_1,\ldots,s_n;\omega) & = & \sum_{k=0}^{\floor{n/2}}\frac{1}{(n-k)!}\sum_{(j_1,\ldots,j_n)}\left(\sum_{\beta_2,\beta_4,\ldots: \sum i\beta_{2i}=k}(-1)^{\sum_i \alpha_{2i}}\binom{\sum_i\beta_{2i}}{\beta_0,\beta_2,\beta_4,\ldots}\Pi_i\frac{1}{\Gamma(i\alpha+1) ^{\beta_{2i}}}\right)\\ & & \quad \times { \inner{ s_{j_1} , s_{j_2} }}_2 \cdots {\inner{ s_{j_{2k-1}}, s_{j_{2k}} }_2 }Q_{s_{j_{2k+1}}}(\omega)\cdots Q_{s_{j_n}}(\omega)
\end{eqnarray*}
These are more explicit expressions than the ones given in~\cite{MR3315581}.
\end{example}

\subsection{Three term relations} The construction follows \cite{MR2581391}. For more information on orthogonal polynomials of several variables see also \cite{MR1827871}, and \cite{MR2130517}.

Let $\Pi^\infty_n$ denote the space of multivariate polynomials in infinite variables $\un z = (z_1, z_2, \ldots)$ with real valued coefficients,
of degree $n.$ Hence, $p \in \Pi^\infty_n$ can be written as $$p(\un z)=  \sum_{\un \gamma} p_{\un \gamma}~{\un z}^{\un \gamma} := \sum_{\un \gamma} p_{\un \gamma} \prod_{j=1}^\infty z_j^{\gamma_j},$$
and its degree is given as
$$\deg (p) := \max \{ |\un \gamma| : p_{\un \gamma} \not= 0 \}.$$

~

Now let us consider the space of polynomials of type $$P_{N} (\inner{\un s, \omega})=  \sum_{k=0}^N \left( \sum_{|\un \gamma|=k}  p_{\un \gamma} \inner{\un s, \omega}^{\un \gamma} \right) = \sum_{k=0}^N \left( \sum_{|\un \gamma|=k} p_{\un \gamma} \prod_{j=1}^\infty \inner{s_j, \omega}^{\gamma_j} \right),$$
in $ \mathbf L^2(\mathcal S^\prime({\mathbb R}),\mathcal C,dP_\varphi).$
For each $k,$ the space of homogeneous polynomials of degree $k$ is given by
$$\sum_{|\un \gamma|=k}  p_{\un \gamma} \inner{\un s, \omega}^{\un \gamma},$$
where $(p_{\un \gamma}) \in \ell^2$ (with possible exception of $k=0$). Still, we have for each $N$ that
$$\mathbf{P}_{N} = \{ P_{N}^1, P_{N}^2, \ldots,  P_{N}^l, \ldots\}, $$%\mbox {~such that } \mbox{Span} {\mathbf{P}_{N}} = \mbox{Span} \{ P_{N}^1, P_{N}^2, \ldots,  P_{N}^k, \ldots\} = \Pi^\infty_N \cap (\Pi^\infty_{N-1})^\perp, \quad N \in \N_0,$$
is an ordered (infinite) basis for the set of polynomials of degree $N$ (hence, the dimension of $\Pi^\infty_N,$ denoted as $d(N),$ is $\infty$). This is to say,
$$\mbox{Span} {\mathbf{P}_{N}} = \mbox{Span} \{ P_{N}^1, P_{N}^2, \ldots,  P_{N}^l, \ldots\} = \Pi^\infty_N \cap (\Pi^\infty_{N-1})^\perp, \quad N \in \N_0.$$
From the construction it holds that  $\{ \mathbf{P}_0, \mathbf{P}_1, \cdots, \mathbf{P}_N \}$ is a  graded basis of $\Pi^\infty_N$ satisfying to
$$\inner{P_{N}^l, P_{N'}^{l'}}_{P_\varphi} =0, \quad \text{whenever } N \not= N'.$$
In a similar way, the polynomial
$$\inner{s_j, \omega} P_{N} (\inner{\un s, \omega}), \quad \text{where now } j \in \N,$$
is spanned by $\{ \mathbf{P}_0, \mathbf{P}_1, \ldots, \mathbf{P}_N,  \mathbf{P}_{N+1}\},$ that is,
$$\inner{s_j, \omega} P_{N} (\inner{\un s, \omega}) = \sum_{k=0}^{N+1} \sum_{l=1}^{\infty}  c^{j,l}_{N, k} ~  P_{k}^l(\inner{\un s, \omega}) =: \sum_{k=0}^{N+1} \mathbf{D}^j_{N,k} \mathbf{P}_k(\inner{\un s, \omega}),$$
where $\mathbf{D}^j_{N,k}$ denote operators acting on $\Pi^\infty_k \cap \mathbf L^2(\mathcal S^\prime(\R),\mathcal C,dP_\varphi).$

%%%%%%%%%%%%%%%%%%

Hence we can express the product of $\inner{s_j, \omega}$ by the elements of the ordered basis $\mathbf{P}_{N}$ as
\begin{eqnarray}
\inner{s_j, \omega}\mathbf{P}_{N}(\inner{\un s, \omega}) %&= &   \Big(   \inner{s_j, \omega} {P}_{N}^m (\inner{\un s, \omega}) \Big)_{m=1}^{d(N)}  \nonumber \\
%& = &  \left( \sum_{k=0}^{N+1} ( c^{j}_{N,k} )^T_m ~  \mathbf{P}_{k}(\inner{\un s, \omega})\right)_{m=1}^{d(N)}  \nonumber \\
& = & \sum_{k=0}^{N+1} \mathbf{D}^{j}_{N,k}  ~  \mathbf{P}_{k}(\inner{\un s, \omega})  \nonumber \\
& = &  \left( \sum_{k=0}^{N+1} ( c^{j}_{N,k} )^T_m ~  \mathbf{P}_{k}(\inner{\un s, \omega})\right)_{m=1}^{\infty}
\end{eqnarray}

~

We observe that
 \begin{gather}
  \Inner{\inner{s_j, \omega}\mathbf{P}_{N}, \mathbf{P}_{k}}_{P_\varphi} = \Inner{\mathbf{P}_{N}, \inner{s_j, \omega} \mathbf{P}_{k}}_{P_\varphi} =0,  \label{InnerTermsmallerN-2}
 \end{gather} for  $k=0,1, \ldots, N-2,$ which leads to
\begin{gather}
 \Inner{\inner{s_j, \omega}\mathbf{P}_{N}, \mathbf{P}_{k}}_{P_\varphi} = \mathbf{D}^{j}_{N,k} \Inner{\mathbf{P}_{k}, \mathbf{P}_{k}}_{P_\varphi}, \quad \text{for } k= N-1,N, N+1,  \label{InnerTermN-1NN+1}
 \end{gather}  where we remark that $\mathbf{D}^{j}_{N,k}$ denotes (by abuse of language) both the operator and the infinite dimensional matrix.

 From these relations we obtain the \textit{three terms recurrence relation}
\begin{gather}
\inner{s_j, \omega}\mathbf{P}_{N} (\inner{\un s, \omega})= \mathbf{A}_N^j \mathbf{P}_{N+1} (\inner{\un s, \omega}) + \mathbf{B}_N^j \mathbf{P}_{N} (\inner{\un s, \omega}) + \mathbf{C}_N^j \mathbf{P}_{N-1} (\inner{\un s, \omega}) \label{3TermsRecurrence}
 \end{gather} for $j, N \in \N_0$ under the usual convention of $\mathbf{P}_{-1} \equiv 0.$ Here, we have that the operators  are given by
\begin{gather}
\mathbf{A}_N^j  := \mathbf{D}^{j}_{N,N+1} : \Pi^\infty_{N+1} \cap \mathbf L^2(\mathcal S^\prime(\R),\mathcal C,dP_\varphi) \rightarrow \Pi^\infty_{N+1} \cap \mathbf L^2(\mathcal S^\prime(\R),\mathcal C,dP_\varphi), \nonumber \\
\mathbf{B}_N^j  := \mathbf{D}^{j}_{N,N} \in  \Pi^\infty_{N} \cap \mathbf L^2(\mathcal S^\prime(\R),\mathcal C,dP_\varphi) \rightarrow \Pi^\infty_{N+1} \cap \mathbf L^2(\mathcal S^\prime(\R),\mathcal C,dP_\varphi), \label{matricesin3termRecurrence} \\
\mathbf{C}_N^j  := \mathbf{D}^{j}_{N,N-1} \in  \Pi^\infty_{N-1} \cap \mathbf L^2(\mathcal S^\prime(\R),\mathcal C,dP_\varphi) \rightarrow \Pi^\infty_{N+1} \cap \mathbf L^2(\mathcal S^\prime(\R),\mathcal C,dP_\varphi). \nonumber
 \end{gather}
By Favard's Theorem the sequence $(\mathbf{P}_{N})_{N \in \N_0}$ is orthogonal. Furthermore, we have $\mathbf{C}_N^j = (\mathbf{A}_{N-1}^j)^\ast.$ For an explicit representation of $\mathbf{P}_N$ we can multiply (\ref{3TermsRecurrence}) on the left by the pseudo-inverse of $\mathbf{A}_N^j $ and perform a convenient rearrangement of the terms.

%Hence, for each $N$
%$$\mathbf{P}_{N} = \{ P_{N}^1, P_{N}^2, \ldots,  P_{N}^l, \ldots\}, \quad N \in \N_0,$$ form an ordered (infinite) graded basis for the set of polynomials of degree $N$, %. From the construction it still holds that  $\{ \mathbf{P}_0, \mathbf{P}_1, \cdots, \mathbf{P}_N \}$ is a  graded basis of $\Pi^\infty_N$
%satisfying to
%$$\inner{P_{N}^l, P_{N'}^{l'}}_{P_\varphi} =0, \quad \text{whenever } N \not= N'.$$
%In a similar way, the polynomial
%$$\inner{s_j, \omega} P_{N} (\inner{\un s, \omega}), \quad \text{where now } j \in \N,$$
%is spanned by $\{ \mathbf{P}_0, \mathbf{P}_1, \ldots, \mathbf{P}_N,  \mathbf{P}_{N+1}\},$ that is,
%$$\inner{s_j, \omega} P_{N} (\inner{\un s, \omega}) := \sum_{k=0}^{N+1} \sum_{l=1}^{\infty}  c^{j,l}_{N, k} ~  P_{k}^l(\inner{\un s, \omega}) := \sum_{k=0}^{N+1} \mathbf{D}^j_{N,k} \mathbf{P}_k(\inner{\un s, \omega}),$$
%where $\mathbf{D}^j_{N,k}$ denotes the operator acting on $\Pi^\infty_k \cap \mathbf L^2(\mathcal S^\prime(\R),\mathcal C,dP_\varphi).$

\subsection{Orthogonalization procedure} %%To be modified accordingly to the new scheme

Since we have the moments for the basic polynomials $Q_{\un s}^{2\un \beta}$ to obtain an orthonormal basis we can proceed in the following iterative way:
\begin{enumerate}
\item We start with the basic polynomials associated to $\un \gamma = (\gamma_1, 0,0, \ldots) \sim \gamma_1,$ that is,
$$ Q_{\un s}^{\un \gamma}(\omega) =  Q_{(s_1, 0, \cdots)}^{(\gamma_1, 0, \ldots)}(\omega), \quad  \omega \in \cS'(\R), s_1 \in \cS(\R), \gamma_1 \in \N_0.$$
The set of all these polynomials forms an orthogonal family that we now normalize
$$\{ \tilde Q^{\gamma_1}, \quad  \gamma_1 \in \N_0 \}$$
and we denote by $\cH_1$ its span %of this ortonormal family,
 \begin{equation} \label{on family 1}
 \cH_1 = \mbox{Span} \{ \tilde Q^{\gamma_1}, \quad \gamma_1 \in \N_0 \}.
 \end{equation}

\item We consider now the basic polynomials associated to $\un \gamma = (\gamma_1, \gamma_2, 0, \ldots) \sim (\gamma_1, \gamma_2),$ that is,
$$
 Q_{\un s}^{\un \gamma}(\omega) =   Q^{(\gamma_1, \gamma_2)}(\omega), \quad  \omega \in \cS'(\R), \gamma_1, \gamma_2  \in \N_0.
 $$ Clearly, $\cH_1$ is a subset of $\mbox{Span} \{ Q^{(\gamma_1, \gamma_2)}, \gamma_1, \gamma_2  \in \N_0 \}$. Hence, we can decompose
 this space into the following orthogonal sum:
$$\mbox{Span} \{ Q^{(\gamma_1, \gamma_2)}, \gamma_1, \gamma_2  \in \N_0 \} = \cH_1 \oplus \cH_1^{\bot},$$
and use again Gram-Schmidt orthogonalization on
$$\mbox{proj}_{\cH_1^\bot}   \mbox{Span} \{ Q^{(\gamma_1, \gamma_2)} \}, $$
to construct the orthonormal family
$$\{ \tilde Q^{(\gamma_1, \gamma_2)},  \gamma_1 \in \N_0, \gamma_2  \in \N \}$$ again under the restriction
$$\| \tilde Q^{(\gamma_1, \gamma_2)} \|_{P_\varphi} =1.$$

We remark, at this point, that this orthogonalization requirement is automatically fulfilled in the case where the measure $P_\varphi$ is a product measure.

Again, we denote by
 \begin{equation} \label{on family 2}
 \cH_2 = \mbox{Span} \{ \tilde Q^{(\gamma_1, \gamma_2)},  \gamma_1 \in \N_0, \gamma_2  \in \N \},
 \end{equation} where it holds
 $$\mbox{Span} \{ \tilde Q^{(\gamma_1, \gamma_2)}, \gamma_1, \gamma_2  \in \N_0 \} =
 \cH_1 \oplus \cH_2,$$ with the elements of $\cH_1$ being orthogonal to the ones of $\cH_2.$\\

 Also, $\cH_1 \oplus \cH_2$ is a subset of the span of the basic polynomials associated to $\un \gamma = (\gamma_1, \gamma_2, \gamma_3, 0, \ldots) \sim
 (\gamma_1, \gamma_2, \gamma_3),$ that is,
 $$\cH_1 \oplus \cH_2 \subset \mbox{Span} \{ \tilde Q_{\un s}^{\un \gamma} =
 \tilde Q^{(\gamma_1, \gamma_2, \gamma_3)},  \gamma_1, \gamma_2, \gamma_3  \in \N_0 \},$$ again subjected to
$$\| \tilde Q^{(\gamma_1, \gamma_2, \gamma_3)} \|_{P_\varphi} = 1.$$

Hence, the following decomposition still holds:
$$\mbox{Span} \{ \tilde Q^{(\gamma_1, \gamma_2, \gamma_3)}, \gamma_1, \gamma_2, \gamma_3  \in \N_0 \} =
(\cH_1 \oplus \cH_2) \oplus (\cH_1 \oplus \cH_2)^\bot.$$
\item By iteration over $N =2, 3, \cdots$, for each
 $$\mbox{Span} \{  Q^{(\gamma_1, \ldots, \gamma_{N+1})}, \gamma_i  \in \N_0, i=1, \ldots, N+1 \} = (\oplus_{j=1}^N \cH_j) \oplus (\oplus_{j=1}^N \cH_j)^\bot,$$
 we construct the orthogonal family
 $$\{ \tilde Q^{(\gamma_1, \cdots, \gamma_{N+1})}, \gamma_i  \in \N_0, i=1, \ldots, N  \in \N_0, \gamma_{N+1} \in \N \}$$
 by Gram-Schmidt orthogonalization and subsequent norm restriction, on
$$\mbox{proj}_{(\oplus_{j=1}^N \cH_j)^\bot}   \mbox{Span} \{  Q^{(\gamma_1, \cdots, \gamma_{N+1})}, \gamma_i  \in \N_0, i=1, \cdots, N+1 \},$$ and denote the resulting space by
 \begin{equation} \label{on family N+1}
\cH_{N+1} = \mbox{Span}\{ \tilde Q^{(\gamma_1, \ldots, \gamma_{N+1})}, \gamma_i  \in \N_0, i=1, \ldots, N  \in \N_0, \gamma_{N+1} \in \N \}.
 \end{equation}

 We remark that
\begin{itemize}
\item $\oplus_{j=1}^{N+1} \cH_j \subset \mbox{Span} \{ \tilde Q^{(\gamma_1, \ldots, \gamma_{N+2})},  \gamma_i  \in \N_0, i=1,
\ldots, N+2 \};$
\item $\mbox{Span} \{ \tilde Q^{(\gamma_1, \ldots, \gamma_{N+2})}, \gamma_i  \in \N_0, i=1, \ldots, N+2 \} = (\oplus_{j=1}^{N+1} \cH_j)
\oplus (\oplus_{j=1}^{N+1} \cH_j)^\bot;$
 \end{itemize}
\end{enumerate}

From this, we obtain that
$$\mbox{Span} \{ \tilde Q^{\un \gamma}(\omega), \gamma_j  \in \N_0, j \in \N_0 \} = \oplus_{j=1}^\infty \cH_j.$$

The above constructed basis of orthogonal polynomials allows us to get the following theorem. In the theorem, and in the sequel we fix $s_j \in \cS(\R)$ to be the
Hermite functions $\zeta_j$ and write $Q^{\un \gamma}(\omega)$ without explicit mention of $\zeta_j$.

\begin{theorem} Every $f \in \mathbf L^2(\mathcal S^\prime(\R),\mathcal C,dP_\varphi)$ has a unique representation
$$f(\w) = \sum_{\un \gamma} c_{\un \gamma} \tilde Q^{\un \gamma}(\w),$$
with $c_{\un \gamma} \in \mathbb C$ for all $\un \gamma.$
\end{theorem}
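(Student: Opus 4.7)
The plan is to combine uniqueness (which is immediate from orthonormality) with a density/completeness argument showing that the closed linear span of the family $\{\tilde Q^{\un\gamma}\}$ fills all of $\mathbf L^2(\mathcal S'(\R),\mathcal C,dP_\varphi)$. First, uniqueness is routine: if $\sum_{\un\gamma} c_{\un\gamma}\tilde Q^{\un\gamma}=\sum_{\un\gamma} c'_{\un\gamma}\tilde Q^{\un\gamma}$ in $\mathbf L^2$, then taking the inner product with each $\tilde Q^{\un\delta}$ and using that $\|\tilde Q^{\un\gamma}\|_{P_\varphi}=1$ together with pairwise orthogonality $\langle \tilde Q^{\un\gamma},\tilde Q^{\un\delta}\rangle_{P_\varphi}=0$ for $\un\gamma\neq\un\delta$ forces $c_{\un\gamma}=c'_{\un\gamma}$.

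The substantive content is therefore existence, i.e.\ totality. Here I would proceed in three steps. Step one: by construction of the orthogonalization scheme in (\ref{on family 1})--(\ref{on family N+1}), for every $N\in\N$ the linear span of $\bigcup_{j=1}^{N+1}\cH_j$ coincides with the span of the raw monomials $Q^{(\gamma_1,\dots,\gamma_{N+1})}$ evaluated at the fixed Hermite test sequence $\zeta_1,\zeta_2,\dots$. Gram--Schmidt changes the generating family but not its linear span, so nothing is lost at any finite stage. Letting $N\to\infty$, the closed linear span of $\{\tilde Q^{\un\gamma}\}$ equals the closure in $\mathbf L^2(\mathcal S'(\R),\mathcal C,dP_\varphi)$ of
\[
\mathcal P:=\mathrm{Span}\bigl\{Q^{\un\gamma}(\omega):\un\gamma\text{ a finite multi-index in }\N_0\bigr\}.
\]

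Step two: I invoke the fact recalled just before Section~3.1 that the $P_\varphi$-exponentials $e(s,\omega)=e^{\langle\omega,s\rangle}/L(s)$, $s\in\mathcal S(\R)$, form a total set in $\mathbf L^2(\mathcal S'(\R),\mathcal C,dP_\varphi)$ (this is the content cited from \cite{ADKS1996}, and it relies precisely on the existence of all moments of $P_\varphi$). Expanding $e(ts,\omega)$ as a power series in $t$, as is done explicitly earlier in Section~3, the coefficients are polynomials in the $Q_{s_j}(\omega)$; consequently each $e(s,\omega)$ lies in the $\mathbf L^2$-closure of polynomials in $\{Q_s(\omega):s\in\mathcal S(\R)\}$. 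This shows that the closure of $\mathrm{Span}\{Q^{\un\gamma}(\omega):\gamma\in\N_0,\,s_j\in\mathcal S(\R)\}$ (for \emph{arbitrary} test functions, not just Hermite ones) is all of $\mathbf L^2$.

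Step three, which I expect to be the main technical point, is to show that restricting the test functions to the Hermite basis $\{\zeta_j\}$ of $\mathbf L^2(\R,dx)$ does not shrink the closure. Here I use the isometry $s\mapsto\frac{1}{\sqrt{\varphi_1}}Q_s$ from Remark~\ref{r567}: each $Q_s(\omega)$ with $s\in\mathcal S(\R)$ is the $\mathbf L^2(P_\varphi)$-limit of $\sum_{j=1}^N \langle s,\zeta_j\rangle_2 Q_{\zeta_j}(\omega)$, and products of such linear forms converge in $\mathbf L^2(P_\varphi)$ to the corresponding products $Q_{s_1}\cdots Q_{s_n}$ (by the multilinearity of the product map together with the higher-moment bounds supplied by the explicit moment formulas of the previous theorem, which guarantee $\mathbf L^{2n}$-control and hence $\mathbf L^2$-convergence of the products via a telescoping argument). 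Consequently $\mathcal P$ already has dense closure, and combined with step two this yields completeness of $\{\tilde Q^{\un\gamma}\}$, from which the claimed unique expansion follows by standard Hilbert space theory.
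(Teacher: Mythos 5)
Your proposal is correct in outline, but note that the paper itself offers no proof of this theorem: immediately after the statement it simply says that ``following the arguments in \cite[p.~723]{ajnfao} one sees that the $\widetilde{Q}^{\un\gamma}$ are indeed an orthonormal basis,'' so there is no in-paper argument to compare against. What you have written is a reasonable self-contained reconstruction of what that citation is standing in for, and it uses exactly the ingredients the paper makes available: uniqueness from orthonormality; span-preservation under the Gram--Schmidt scheme of \eqref{on family 1}--\eqref{on family N+1}; totality of the $P_\varphi$-exponentials $e(s,\omega)$ (which the paper asserts in Section~3, again by citation to \cite{ADKS1996}); and the moment formula \eqref{Comparison3} to pass from arbitrary test functions to the Hermite basis. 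The one step you should make explicit rather than leave implicit in the word ``consequently'' is the $\mathbf L^2(P_\varphi)$-convergence of the series $\sum_n Q_s^n/n!$ to $e^{\langle\omega,s\rangle}$: by \eqref{Comparison3} one has $\|Q_s^n\|_{\mathbf L^2}=\bigl((2n)!\,\varphi_n\bigr)^{1/2}\|s\|_2^n$, and since $(2n)!/(n!)^2\sim 4^n n^{-1/2}$ while $\varphi_n^{1/n}\to 0$ because $\varphi$ is entire, the series $\sum_n \|Q_s^n\|_{\mathbf L^2}/n!$ converges; without this estimate the claim that each exponential lies in the closure of the polynomial algebra is unsupported, and it is precisely here that the hypothesis $\varphi\in ML$ (entire) enters. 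Your Step three is fine as sketched: linearity of $s\mapsto Q_s$, the telescoping identity, generalized H\"older with $n$ factors in $\mathbf L^{2n}$, and the bound $\|Q_s\|_{\mathbf L^{2n}}=\bigl((2n)!\,\varphi_n\bigr)^{1/(2n)}\|s\|_2$ together give the required $\mathbf L^2$-convergence of products. With that one estimate spelled out, your argument is a complete proof and arguably an improvement on the paper, which delegates the entire content of the theorem to an external reference.
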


Following the arguments in \cite[p. 723]{ajnfao} one sees that the  $\widetilde{Q}^{\un\gamma}$ are indeed an orthonormal basis of $\mathbf L^2(\cS^\prime(\R),\mathcal C,dP_\varphi)$. These results allow us to construct a Fock space linked to the $\mathbf L^2(\cS^\prime(\R),\mathcal C,dP_\varphi)$ space.

\section{Generalized Fock spaces}
\setcounter{equation}{0}
This section presents a result of independent interest, which will be used in Section \ref{fock} to build the counterpart of the Fock space for the space
$\mathbf L^2(\mathcal S^\prime(\R), \mathcal C, dP_\varphi)$. We are given a sequence of positive definite functions $K_j(z_j,w_j)$ of the form
\[
K_j(z_j,w_j)=1+\underbrace{\sum_{k=1}^\infty z_j^k\ov w_j^k \alpha_{k_j}}_{k_j(z_j\overline{w_j})}
\]
where the numbers $\alpha_{k_j}$ are assumed positive and where $z_j,w_j$ run through some neighborhood $\Omega_j$ of the origin.
% We assume $K_j(0,0)=1$ and write $K_j(z_j,w_j)=1+k_j(z_j,w_j)$.
We consider
\begin{equation}\label{Eq:3.001}
\Omega := \{ \un z = (z_1, z_2, \ldots) \in \C^{\N} : 0<\prod_{j=1}^\infty k_j(z_j, z_j) < \infty \},
\end{equation}
which always contains sequences with at most a finite number of non-zero entries. Denote by $K$ the positive definite kernel acting on $(\un z, \un w) \in \Omega \times \Omega$ as
 \begin{equation}\label{Eq:3.002}
K(\un z, \un w) :=  \prod_{j=1}^\infty (1+k_j(z_j, w_j) ), \quad \mbox{for all } (\un z, \un w) \in \Omega \times \Omega.
\end{equation}
%Consider a reproducing kernel $k_j(z,w),$ e.g.:
%$k_j(z,w) = \sum_{k=1}^\infty z^k\ov w^k \alpha_{k_j}$,
% with $$\cH(k_j)=\left\{ \sum_{k=0}^\infty z^k c_k : \sum_{k=1}^\infty \frac{|c_k|^2}{\alpha_{j_k}} < \infty \right\}.$$
%Hereby, we assume $k_j$ to be positive definite (automatic since $\alpha_{j_k} >0$), $1 \notin\cH(k_j),$ and such that $\| 1\|_{\cH(1+k_j)} =1.$\smallskip

We set
\begin{equation}\label{Eq:3.003}
K_N(\un z, \un w) :=  \prod_{j=1}^N (1+k_j(z_j, w_j) ), \quad \mbox{for all } (\un z, \un w) \in \Omega \times \Omega,
\end{equation}
we have
$$K(\un z, \un w)-K_N(\un z, \un w) = K_N(\un z, \un w) \left( \prod_{j=N+1}^\infty (1+k_j(z_j, w_j) ) \right) \geq 0,$$ as both kernels in the product are positive definite. hence,
$$K_N(\un z, \un w) \leq K(\un z, \un w), \quad \mbox{for all } (\un z, \un w) \in \Omega \times \Omega.$$

\begin{proposition}\label{Lem:3.001}
In the above notation we have the following decomposition:
\begin{gather*}
\cH(K) := \C \oplus \left( \oplus_j \cH(k_j)  \right) \oplus \left( \oplus_{j_1 < j_2} \cH(k_{j_1}) \otimes \cH(k_{j_1})  \right) \oplus \cdots \\
\cdots \oplus \left( \oplus_{j_1 < \cdots < j_N} \cH(k_{j_1}) \otimes \cdots \otimes \cH(k_{j_N})  \right) \oplus \cdots \end{gather*} where $\cH(k_j)$ denotes the reproducing space associated to the kernel $k_j(z_j, w_j)$.
\end{proposition}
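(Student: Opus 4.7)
The plan is to establish the decomposition first for the finite truncation $K_N$ by expanding the product and invoking the sum-of-kernels theorem of Aronszajn, and then to promote the result to $K$ through the monotone limit of the kernel tower.

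\emph{Finite case.} Distributivity gives the $2^N$-term expansion
\[
K_N(\un z,\un w)=\prod_{j=1}^N\bigl(1+k_j(z_j,w_j)\bigr)=\sum_{S\subseteq\{1,\ldots,N\}}\prod_{j\in S}k_j(z_j,w_j),
\]
with the empty product contributing the constant kernel $1$. Each non-empty summand is a product of positive definite kernels in disjoint groups of variables, hence is the reproducing kernel of the tensor product $\bigotimes_{j\in S}\cH(k_j)$, which I view as a space of functions on $\Omega$ depending only on the coordinates $(z_j)_{j\in S}$. Since $k_j(0,\,\cdot\,)=0$, every element of $\cH(k_j)$ vanishes at $z_j=0$, so the subspaces attached to distinct finite subsets $S$ have trivial pairwise intersection. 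Iterated application of Aronszajn's sum-of-kernels theorem then yields the orthogonal decomposition
\[
\cH(K_N)=\bigoplus_{S\subseteq\{1,\ldots,N\}}\bigotimes_{j\in S}\cH(k_j).
\]

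\emph{Passage to the limit.} The identity $K_{N+1}-K_N=K_N\cdot k_{N+1}\ge 0$ in the cone of positive definite kernels (and similarly $K-K_N\ge 0$, as already recorded before the statement) produces an isometric inclusion tower $\cH(K_1)\subset\cH(K_2)\subset\cdots\subset\cH(K)$. To show that $\bigcup_N\cH(K_N)$ is dense in $\cH(K)$, I would test on kernel functions: for $\un w\in\Omega$ and the truncation $\un w^{(N)}=(w_1,\ldots,w_N,0,0,\ldots)$, one has $K(\,\cdot\,,\un w^{(N)})=K_N(\,\cdot\,,\un w)\in\cH(K_N)$, and the reproducing property gives
\[
\bigl\|K(\,\cdot\,,\un w)-K_N(\,\cdot\,,\un w)\bigr\|^2_{\cH(K)}=K(\un w,\un w)-K_N(\un w,\un w)\longrightarrow 0
\]
by the convergence of the infinite product built into the definition \eqref{Eq:3.001} of $\Omega$.

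\emph{Conclusion and main obstacle.} Combining the two steps, $\cH(K)=\overline{\bigcup_N\cH(K_N)}$ inherits an orthogonal decomposition indexed by all finite subsets $S\subset\N$, which is precisely the claimed formula. The principal difficulty, to my eye, is not the algebraic identity but the careful realization of the factors $\bigotimes_{j\in S}\cH(k_j)$ as genuinely orthogonal subspaces of $\cH(K)$ rather than mere external summands; the normalization $k_j(0,\,\cdot\,)=0$, which forces functions in $\cH(k_j)$ to vanish at $0$, is exactly what reconciles the internal and external direct sums and what makes the orthogonality stable under the passage to the inductive limit.
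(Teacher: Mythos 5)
Your argument is correct, and its first half coincides with the paper's: the authors also expand $K_N=\prod_{j=1}^N(1+k_j)$ term by term (they write out $N=1,2,3$ and invoke induction, which is exactly your subset-indexed sum $\sum_{S\subseteq\{1,\ldots,N\}}\prod_{j\in S}k_j$) and identify each summand with a tensor product of the $\cH(k_j)$ via the sum-of-kernels theorem. Where you genuinely diverge is the passage to the limit. The paper factors $K-K_N=K_N R_N$ with $R_N=\prod_{j>N}(1+k_j)-1$, deduces $\cH(K)=\cH(K_N)\oplus\bigl(\cH(K_N)\otimes\cH(R_N)\bigr)$, and then asserts rather tersely that an $f$ orthogonal to the $N$-th piece ``tends to $0$ as $N\to\infty$.'' You instead prove density of $\bigcup_N\cH(K_N)$ directly on kernel elements, using the identity $K(\cdot,\un w^{(N)})=K_N(\cdot,\un w)$ and the explicit computation $\|K(\cdot,\un w)-K_N(\cdot,\un w)\|^2=K(\un w,\un w)-K_N(\un w,\un w)\to 0$; this is cleaner, fully quantitative, and ties the convergence transparently to the definition of $\Omega$, whereas the paper's tail argument is left implicit. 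Your observation that $k_j(0,\cdot)=0$ forces trivial pairwise intersections (and hence turns Aronszajn's sum into an orthogonal internal direct sum, both at finite level and for the isometric inclusions $\cH(K_N)\hookrightarrow\cH(K)$) is precisely the point the paper glosses over, so your write-up is, if anything, more complete than the original. The only caveat worth recording is that the isometric nature of the inclusions $\cH(K_N)\subset\cH(K_{N+1})\subset\cH(K)$ also needs the trivial-intersection argument (contractivity alone follows from $K-K_N\ge 0$); you flag this in your closing remark but could state the one-line verification explicitly.
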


\begin{proof} For simplicity, denote $k_j(z_j, w_j) = k_j,$ so that $K_N = (1+k_1) \cdots (1+k_N).$ Hence,
\begin{gather*}
\cH(K_1) := \C \oplus \cH(k_1),
\end{gather*}
while
\begin{gather*}
K_2 = (1+k_1)(1+k_2) = 1+ k_1 + k_2 + k_1 k_2,
\end{gather*}implies
\begin{gather*}
\cH(K_2) := \C \oplus \left(\cH(k_1) \oplus \cH(k_2) \right) \oplus \left( \cH(k_1) \otimes \cH(k_2) \right),
\end{gather*}
while for $N=3$
\begin{gather*}
K_3 = (1+k_1)(1+k_2)(1+k_3) = (1+ k_1 + k_2 + k_1 k_2)(1+k_3) \\
= 1+ k_1 + k_2 + k_3+ k_1 k_2 + k_1k_2 + k_2k_3 + k_1 k_2k_3,
\end{gather*} that is to say we get
\begin{gather*}
\cH(K_3) := \C \oplus \left(\cH(k_1) \oplus \cH(k_2) \oplus \cH(k_3) \right) \oplus \left( \oplus_{j_1 < j_2} \cH(k_{j_1}) \otimes \cH(k_{j_2}) \right) \\
\oplus \left( \cH(k_1) \otimes \cH(k_2) \otimes \cH(k_3) \right).
\end{gather*} Proceeding by induction we obtain the desired result. Furthermore, each $\cH(K_N)$ is embedded isometrically in $\cH(K).$ Indeed, we have
\begin{gather*}
K - K_N = K_N \left(  \prod_{j=N+1}^\infty (1+k_j )  -1 \right) := K_N R_N,
\end{gather*}  where $R_N := \prod_{j=N+1}^\infty (1+k_j )  -1.$ Hence, this implies $1 \notin \cH(R_N),$  $\cH(K) = \cH(K_N) \oplus \left( \cH(K_N) \otimes \cH(R_N)  \right),$ and $\cH(K_N) \cap \cH(R_N) = \{  0 \}.$

Assume $f \in \cH(K) \ominus \cH(R_N).$ Since
$$\prod_{j=1}^\infty (1+k_j ) = 1+ \sum_j k_j + \sum_{j_1 < j_2} k_{j_1} k_{j_2} + \cdots $$ we get that $f \rightarrow 0$ as $N \rightarrow \infty,$ thus ensuring convergence.
\end{proof}

\section{$\varphi$-transform and Fock spaces}
\label{fock}
\setcounter{equation}{0}
Let $\un \alpha=(\alpha_1,\ldots, \alpha_n)\in\mathbb N_0^n$ (in particular some of the $\alpha_j$ may be equal to $0$).
We set \begin{eqnarray}
\un z^{\un \alpha}&=&z_1^{\alpha_1}\cdots z_n^{\alpha_n}\\
\varphi_{\un \alpha}&=&\varphi_{\alpha_1}\cdots \varphi_{\alpha_n}\\
|\un \alpha|&=&\alpha_1+\cdots +\alpha_n.
\end{eqnarray}

Recall that we have fixed an orthonormal basis $\zeta_1,\zeta_2,\ldots$ of $\mathbf L^2(\mathbb R,dx)$, where $\zeta_j$'s are the Hermite functions,
and consider the associated orthonormal basis constructed in Section 3, that is, the basis spanned by $\tilde Q^{\un \gamma}.$

%\begin{eqnarray*}
%Q_j(w)&=&\langle w,\zeta_j\rangle,\quad j=1,2,\ldots\\
%\tilde Q^{(\gamma_1,\ldots, \gamma_n,0,0,\ldots)}&=&\tilde Q_1^{\gamma_1}\cdots \tilde Q_n^{\gamma_n},\quad {\rm that\,\, is}\\
%\tilde Q^\gamma&=&\prod \tilde Q_j^{\gamma_j},\quad \gamma\in L.
%\end{eqnarray*}

\begin{definition} For our basis we define the corresponding Wick product in the usual way via
\begin{equation}
\label{wickp}
\tilde Q^{ \un \gamma} \lozenge \tilde Q^{\un \delta}=\tilde Q^{\un \gamma+\un \delta}.
\end{equation}
\end{definition}
%Let $j\in\mathbb N$. We denote by $\e_j$ the element of $L$ whose all components are $0$, at the exception of the $j$-th one, equal to $1$.  \\

The Wick product is not a law of composition in $\mathbf L^2(\cS^\prime(\R), \mathcal C, dP_\varphi)$. We build in the next section a larger space, in which the Wick product is stable.

\begin{definition}
The $\varphi$-transform sends $\tilde Q^{\un \gamma}$ to $\dfrac{\un z^{\un \gamma}}{\sqrt{\varphi_{\un \gamma}}}.$
\end{definition}
%%%%%%%%%%

\begin{theorem}
Under the $\varphi$-transform the space $\mathbf L_2(\cS^\prime(\R),\mathcal C,dP_\varphi)$ is mapped into the reproducing kernel Hilbert space of functions with
reproducing kernel equal to \eqref{fockphi}.
\end{theorem}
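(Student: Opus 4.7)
The plan is to exhibit the $\varphi$-transform as a bijection between the orthonormal basis $\{\tilde Q^{\un\gamma}\}$ of $\mathbf L^2(\mathcal S'(\R), \mathcal C, dP_\varphi)$ built in Section \ref{sec3} and a natural orthonormal basis of $\mathcal H(K_\varphi)$, and then to extend linearly and by continuity to obtain the stated isometric isomorphism.

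First, I would write $\varphi(z_j\overline{w_j}) = 1 + k_j(z_j,w_j)$ with
$$k_j(z_j,w_j) = \sum_{k\ge 1} \varphi_k\, z_j^k\, \overline{w_j}^k,$$
which is positive definite on $\mathbb C$ since all $\varphi_k\ge 0$. The infinite product \eqref{fockphi} is then in the form studied in Proposition \ref{Lem:3.001}, whose decomposition reads
$$\mathcal H(K_\varphi) = \mathbb C \oplus \Big(\bigoplus_j \mathcal H(k_j)\Big) \oplus \Big(\bigoplus_{j_1<j_2}\mathcal H(k_{j_1}) \otimes \mathcal H(k_{j_2})\Big) \oplus \cdots.$$
Each factor $\mathcal H(k_j)$ is a diagonal RKHS with the orthogonal monomial system $\{z_j^k\}_{k\ge 1}$ satisfying $\|z_j^k\|_{\mathcal H(k_j)}^2 = 1/\varphi_k$; once the normalisation is fixed, the tensor-product decomposition above yields an orthonormal basis of $\mathcal H(K_\varphi)$ indexed by multi-indices $\un\gamma$ with finitely many nonzero entries, each basis element being a scalar multiple of $\un z^{\un\gamma}$.

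By construction, the $\varphi$-transform sends $\tilde Q^{\un\gamma}$ to precisely this $\un\gamma$-th basis element (up to the normalisation convention $\un z^{\un\gamma}/\sqrt{\varphi_{\un\gamma}}$ chosen in its definition). Since both families are orthonormal bases of their respective spaces, the map extends by linearity and continuity to a unitary operator from $\mathbf L^2(\mathcal S'(\R), \mathcal C, dP_\varphi)$ onto $\mathcal H(K_\varphi)$, establishing the desired isomorphism.

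The main technical point is that Proposition \ref{Lem:3.001} is proved via finite truncations $K_N$ together with the isometric embeddings $\mathcal H(K_N)\hookrightarrow \mathcal H(K_\varphi)$, so convergence to the full infinite product must be controlled on the correct domain. This domain is supplied by Proposition \ref{corona123}, which ensures that \eqref{fockphi} converges on $\Omega_\varphi$ and in particular on $\ell^2(\mathbb N,\mathbb C)$. Density of the polynomial monomials in $\mathcal H(K_\varphi)$, together with the fact from Section \ref{sec3} that $\{\tilde Q^{\un\gamma}\}$ is a total orthonormal family in $\mathbf L^2(\mathcal S'(\R), \mathcal C, dP_\varphi)$, then closes the argument.
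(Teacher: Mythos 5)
Your proof is correct and follows essentially the same route as the paper's: identify $\mathcal H(K_\varphi)$ as the weighted power-series space in which the monomials $\un z^{\un\gamma}/\sqrt{\varphi_{\un\gamma}}$ form an orthonormal basis, and observe that the $\varphi$-transform carries the orthonormal basis $\{\tilde Q^{\un\gamma}\}$ onto that basis, hence extends to a unitary map. The only difference is cosmetic: the paper obtains the norm $\|f\|_\varphi^2=\sum_{\un\alpha}|f_{\un\alpha}|^2/\varphi_{\un\alpha}$ by expanding the infinite product \eqref{fockphi} directly, whereas you justify the same identification by invoking the tensor decomposition of Proposition \ref{Lem:3.001}.
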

\begin{proof}
With the above multi-index notation we have
\[
\prod_{j=1}^\infty\varphi(z_j\overline{w_j})=1+\sum_{n=1}^\infty\sum_{|\un \alpha|=n}\varphi_{\un \alpha} \un z^{\un \alpha} \un{\overline{w}}^{\un \alpha}
\]
so that the Fock space is now equal to the set of power series of the form
\[
f(z)=\sum_{\un \alpha\in L} f_{\un \alpha} \un z^{\un \alpha}
\]
with (square of the) norm defined by
\[
\|f\|_\varphi^2=\sum_{\un \alpha\in L}\frac{|f_{\un \alpha}|^2}{\varphi_{\un \alpha}}.
\]
The $\varphi$-transform reads
\[
\sum_{\un \alpha\in L} f_{\un \alpha} \widetilde{Q}^{\un\alpha}\,\,\mapsto\,\, \sum_{\un \alpha\in L} f_{\un \alpha} \frac{\un z^{\un \alpha}}{\sqrt{\varphi_{\un \alpha}}}
\]
is then unitary.
\end{proof}

In the classical setting, the Fock space is characterized to be the unique space of power series in which the adjoint of the operator $M_z$ of
multiplication by $z$, is differentiation with respect the complex variable,
or, equivalently, the adjoint of differentiation with respect to the complex variable is the operator of multiplication by $z$.\smallskip

We now study the counterparts of these operators when $\varphi$ fulfills some supplementary hypothesis. More precisely,
if additionally the function $\varphi$ is an entire function with order $\rho >0$ and  degree $\sigma > 0,$ that is, such that $$\lim_{k \rightarrow \infty} k^{\frac{1}{\rho}} \sqrt[k]{|\varphi_k|} =\left(\sigma e \rho\right)^{\frac{1}{\rho}}$$ we can consider the Gelfond-Leontiev operator $\partial^{\varphi}$ of
generalized differentiation associated to $\varphi$ which acts on an analytic function $f(z) =\sum_{k=0}^\infty a_k z^k,  |z|<1,$ as
$$
f \mapsto \partial^\varphi f(z) =\sum_{k=1}^\infty \frac{\varphi_{k-1}}{\varphi_k} ~a_k z^{k-1}.$$
See \cite{MR0045812,MR1265940}.
Then, we have
\begin{equation}
\label{123321}
\partial^\varphi \varphi(z) = \varphi(z).
\end{equation}
When $\varphi(z)=e^z$ one had $\partial^\varphi=\partial$, and \eqref{123321} reduces to $\partial e^z=e^z$.

\begin{theorem}
Let $\mathcal H(\varphi)$ be the reproducing kernel Hilbert space with reproducing kernel $\varphi(z\overline{w})$.
Then,
\begin{eqnarray}
(\partial^\varphi)^*&=&M_z\\
M_z^*&=&\partial^\varphi.
\end{eqnarray}
\end{theorem}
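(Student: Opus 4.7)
The plan is to identify a common dense core for $M_z$ and $\partial^\varphi$ and verify the adjoint identity on it, then extend by closure. Expanding $\varphi(z\overline w)=\sum_{n=0}^\infty\varphi_n z^n\overline w^n$ in the reproducing kernel shows that the functions $e_n(z):=\sqrt{\varphi_n}\,z^n$ form an orthonormal basis of $\mathcal H(\varphi)$, so the polynomial space $\mathcal{P}:=\C[z]$ is dense in $\mathcal H(\varphi)$ and is invariant under both operators. On this basis they act as mutually dual weighted shifts,
$$M_z e_n = \sqrt{\varphi_n/\varphi_{n+1}}\, e_{n+1},\qquad \partial^\varphi e_n = \sqrt{\varphi_{n-1}/\varphi_n}\, e_{n-1}\ (n\geq 1),\quad \partial^\varphi e_0=0,$$
with matching coefficients $\lambda_n:=\sqrt{\varphi_n/\varphi_{n+1}}$, which already exhibits $M_z$ and $\partial^\varphi$ as formal adjoints on $\mathcal{P}$.

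The cleanest verification uses the reproducing kernel. Applying $\partial^\varphi$ termwise to $K_w(z)=\varphi(z\overline w)=\sum_n\varphi_n\overline w^n z^n$ yields
$$\partial^\varphi K_w(z)=\sum_{n\geq 1}\varphi_{n-1}\overline w^n z^{n-1}=\overline w\,K_w(z),$$
so every kernel function is an eigenfunction of $\partial^\varphi$ with eigenvalue $\overline w$. Then for any $f$ in the domain of $(\partial^\varphi)^*$, the reproducing property together with the defining identity of the adjoint gives
$$((\partial^\varphi)^*f)(w)=\overline{\langle K_w,(\partial^\varphi)^*f\rangle}=\overline{\langle \partial^\varphi K_w,f\rangle}=\overline{\,\overline w\,\overline{f(w)}\,}=w\,f(w)=(M_zf)(w),$$
and the symmetric computation (or taking adjoints of the relation just established) yields $M_z^*=\partial^\varphi$. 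As an equivalent route, a direct calculation in the basis $\{e_n\}$ with $f=\sum a_nz^n$ and $g=\sum b_nz^n$ in $\mathcal{P}$ gives
$$\langle M_zf,g\rangle_{\mathcal H(\varphi)}=\sum_{k\geq 0}\frac{a_k\overline{b_{k+1}}}{\varphi_{k+1}}=\langle f,\partial^\varphi g\rangle_{\mathcal H(\varphi)},$$
which is the same identity in coordinates.

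The main technical wrinkle is that both operators are generally unbounded, since the weights $\lambda_n$ need not be bounded, so the conclusion has to be read as equality of closed operators on their maximal natural domains. This is resolved by observing that $\mathcal{P}$ is a core for each weighted shift — equivalently, that the total family $\{K_w\}$ is contained in the domain of both operators — which promotes the matrix identity on $\mathcal{P}$ to the full adjoint equalities $(\partial^\varphi)^*=M_z$ and $M_z^*=\partial^\varphi$ asserted in the theorem.
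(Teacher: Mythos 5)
Your proposal is correct and follows essentially the same route as the paper: the paper's proof consists precisely of the verification that $\langle\partial^\varphi z^k,z^\ell\rangle=\langle z^k,M_zz^\ell\rangle=\varphi_k^{-1}\delta_{\ell,k-1}$ on monomials, which is your weighted-shift computation in the orthonormal basis $e_n=\sqrt{\varphi_n}\,z^n$ written in coordinates. Your additional kernel-eigenfunction argument ($\partial^\varphi K_w=\overline{w}\,K_w$, which the paper only uses later for the infinite-product kernel) and your attention to unboundedness, maximal domains and the core property of the polynomials go beyond the paper's stated ``gist of the proof,'' which leaves those points implicit.
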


\begin{proof} We just give the gist of the proof. Let $k,\ell\in\mathbb N_0$. We have
\begin{equation}
\begin{split}
\langle\partial^\varphi z^k,z^\ell\rangle&=\frac{\varphi_{k-1}}{\varphi_k}\langle z^{k-1},z^\ell\rangle\\
&=\begin{cases} \,\, 0\,\,\,\,\quad\hspace{1.95cm} \ell\not= k-1\\
  \frac{\varphi_{k-1}}{\varphi_k}\langle z^{k-1},z^{k-1}\rangle,\,\,\,\, k-1=\ell   \end{cases}\\
&=\begin{cases} \,\, 0\,\,\,\,\quad\hspace{1.95cm} \ell\not= k-1\\
                \frac{\varphi_{k-1}}{\varphi_k}\frac{1}{\varphi_{k-1}}=\frac{1}{\varphi_k},\,\,\,\, k-1=\ell   \end{cases}
\end{split}
\end{equation}
and
\begin{equation}
\begin{split}
\langle z^k,M_zz^\ell\rangle&=\langle z^{k},z^{\ell+1}\rangle\\
&=\begin{cases} \,\, 0\,\,\,\,\quad\hspace{1.95cm} \ell+1\not= k\\
  \langle z^{k},z^{k}\rangle,\,\,\,\, \hspace{1.2cm}k-1=\ell   \end{cases}\\
&=\begin{cases} \,\, 0\,\,\,\,\quad\hspace{1.95cm} \ell\not= k-1\\
                \frac{1}{\varphi_k},\,\,\,\,\hspace{2.0cm} k-1=\ell   \end{cases}
\end{split}
\end{equation}
\end{proof}

As an immediate consequence we have:
\begin{lemma}
Let $k,\ell\in\mathbb N_0$ It holds that:
\begin{enumerate}
\item $\langle\partial^\varphi z^k,z^\ell \rangle = \langle z^k,M_zz^\ell\rangle = \left\{  \begin{array}{ccc}
0,& \qquad& k-1\not= \ell \\
\varphi_k^{-1} ,& \qquad& k-1= \ell
\end{array} \right. ;$

~

\item $[\partial^\varphi, M_z] z^k = \big( \frac{\varphi_k^2 -\varphi_{k-1} \varphi_{k+1}}{\varphi_{k} \varphi_{k+1}}  \big)z^k, \quad k=0,1,2, \ldots,$
\end{enumerate} under the convention $\varphi_{-1}=0.$
\end{lemma}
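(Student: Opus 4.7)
The plan is to obtain both assertions by direct computation, using only the definition of $\partial^\varphi$ acting on monomials, together with the norm structure of $\mathcal H(\varphi)$ in which $\{z^k\}_{k\ge 0}$ is an orthogonal family with $\langle z^k,z^k\rangle = 1/\varphi_k$. Indeed, this orthogonality of monomials is already used implicitly in the preceding theorem, since the reproducing kernel expansion $\varphi(z\overline w)=\sum_{k\ge0}\varphi_k z^k\overline w^k$ forces $\|z^k\|^2=1/\varphi_k$ and $\langle z^k,z^\ell\rangle=0$ for $k\neq\ell$.

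For part (1), I would simply read off the two chains of equalities displayed in the proof of the preceding theorem: both $\langle \partial^\varphi z^k,z^\ell\rangle$ and $\langle z^k,M_zz^\ell\rangle$ have been shown to vanish unless $\ell=k-1$, in which case both equal $1/\varphi_k$. So part (1) is essentially a restatement (and hence also a verification that $(\partial^\varphi)^*=M_z$ on the monomial basis).

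For part (2), I would compute the commutator on monomials directly from the definition $\partial^\varphi z^n = (\varphi_{n-1}/\varphi_n)\,z^{n-1}$ (with the convention $\varphi_{-1}=0$) and $M_z z^n = z^{n+1}$. Namely,
\begin{equation*}
\partial^\varphi M_z z^k \;=\; \partial^\varphi z^{k+1} \;=\; \frac{\varphi_k}{\varphi_{k+1}}\,z^k,
\end{equation*}
while
\begin{equation*}
M_z\partial^\varphi z^k \;=\; M_z\!\left(\frac{\varphi_{k-1}}{\varphi_k}\,z^{k-1}\right) \;=\; \frac{\varphi_{k-1}}{\varphi_k}\,z^k.
\end{equation*}
Subtracting and putting over the common denominator $\varphi_k\varphi_{k+1}$ yields the claimed coefficient $(\varphi_k^2-\varphi_{k-1}\varphi_{k+1})/(\varphi_k\varphi_{k+1})$. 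The case $k=0$ is covered automatically by the convention $\varphi_{-1}=0$, since then $\partial^\varphi z^0=0$ and the formula reduces to $\varphi_0/\varphi_1\cdot z^0$.

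There is no real obstacle here: the argument is a bookkeeping exercise once the action of $\partial^\varphi$ on monomials is written down. The only point worth emphasizing in the write-up is the boundary convention $\varphi_{-1}=0$, which makes the single formula in (2) valid uniformly for all $k\in\mathbb N_0$ rather than needing a separate statement for $k=0$.
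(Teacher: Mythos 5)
Your proposal is correct and follows essentially the same route as the paper: the paper states this lemma as an immediate consequence of the preceding theorem, whose proof contains exactly the two displayed computations you cite for part (1), and part (2) is the direct monomial computation $\partial^\varphi M_z z^k - M_z\partial^\varphi z^k = \bigl(\tfrac{\varphi_k}{\varphi_{k+1}}-\tfrac{\varphi_{k-1}}{\varphi_k}\bigr)z^k$ that the paper leaves implicit. Your explicit check of the $k=0$ boundary case via $\varphi_{-1}=0$ is a small but welcome addition.
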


We conclude this section by a computation which shows the counterpart of the formula $\partial e^{az}=ae^{az}$.

\begin{proposition}
Let $K_\varphi(\un z,\un w)$ be defined by \eqref{fockphi}, that is
\[
K_\varphi(\un z,\un w)=\prod_{j=1}^\infty \varphi(z_j\overline{w_j}),
\]
and let $\partial_j^\varphi$ denote the operator $\partial^\varphi$ applied to the variable $z_j$.
It holds that
\[
\partial^\varphi_j K_\varphi(\un z,\un w)=\overline{w_j}K_\varphi(\un z,\un w).
\]
\end{proposition}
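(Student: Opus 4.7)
The plan is to reduce the infinite-product statement to a one-variable identity, namely a generalization of the familiar formula $\partial^\varphi \varphi(z)=\varphi(z)$ noted in \eqref{123321}, and then apply it in the $j$-th variable.

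First I would use the fact that $\partial^\varphi_j$, by definition, acts only on the $z_j$ variable of a power series. Since
\[
K_\varphi(\un z,\un w)=\varphi(z_j\overline{w_j})\prod_{k\neq j}\varphi(z_k\overline{w_k}),
\]
and the second factor is independent of $z_j$, linearity of $\partial^\varphi_j$ gives
\[
\partial^\varphi_j K_\varphi(\un z,\un w)=\Bigl(\prod_{k\neq j}\varphi(z_k\overline{w_k})\Bigr)\,\partial^\varphi_j\bigl[\varphi(z_j\overline{w_j})\bigr].
\]
So the entire statement reduces to proving
\[
\partial^\varphi_j\bigl[\varphi(z_j\overline{w_j})\bigr]=\overline{w_j}\,\varphi(z_j\overline{w_j}),
\]
which is a one-variable computation with $\overline{w_j}$ treated as a parameter.

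Next I would expand $\varphi$ as a power series in $z_j$, with coefficients absorbing the fixed quantity $\overline{w_j}$:
\[
\varphi(z_j\overline{w_j})=\sum_{k=0}^\infty\varphi_k\,\overline{w_j}^{\,k}\,z_j^{\,k}.
\]
Applying the Gelfond-Leontiev operator termwise, which is legitimate since $\varphi$ is entire and the series converges uniformly on compacta, yields
\[
\partial^\varphi_j\bigl[\varphi(z_j\overline{w_j})\bigr]=\sum_{k=1}^\infty\frac{\varphi_{k-1}}{\varphi_k}\,\varphi_k\,\overline{w_j}^{\,k}\,z_j^{\,k-1}=\sum_{k=1}^\infty\varphi_{k-1}\,\overline{w_j}^{\,k}\,z_j^{\,k-1}.
\]
Re-indexing $m=k-1$ and factoring out $\overline{w_j}$ gives $\overline{w_j}\sum_{m=0}^\infty\varphi_m(z_j\overline{w_j})^m=\overline{w_j}\,\varphi(z_j\overline{w_j})$, as desired. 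Multiplying back by $\prod_{k\neq j}\varphi(z_k\overline{w_k})$ produces $\overline{w_j}K_\varphi(\un z,\un w)$.

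There is essentially no obstacle beyond bookkeeping: the $\varphi_{k-1}/\varphi_k$ weights of $\partial^\varphi$ are engineered precisely to cancel the coefficients $\varphi_k$ of $\varphi$'s own Taylor expansion, which is why the identity $\partial^\varphi\varphi=\varphi$ holds in the first place; the only extra ingredient here is that the chain rule in the Gelfond-Leontiev calculus is trivial with respect to multiplication by a scalar, so $\overline{w_j}$ simply scales the series and emerges as an eigenvalue. The convergence region is covered by working on $\Omega_\varphi$ where, by Proposition \ref{corona123}, the infinite product defining $K_\varphi$ is absolutely convergent, so interchanging $\partial^\varphi_j$ with the product over $k\neq j$ is justified.
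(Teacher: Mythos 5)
Your proposal is correct and follows essentially the same route as the paper: factor out the $z_j$-independent part of the product, expand $\varphi(z_j\overline{w_j})$ as a power series in $z_j$, apply $\partial^\varphi_j$ termwise so the weights $\varphi_{k-1}/\varphi_k$ cancel the Taylor coefficients, and re-index to recover $\overline{w_j}\,\varphi(z_j\overline{w_j})$. Your additional remarks on convergence and re-indexing are just a more explicit write-up of the same computation.
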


\begin{proof}
We have
\begin{eqnarray*}
  \partial^\varphi_j K(\un z,\un w)
  & = & \partial^\varphi_j \left( \prod_{i=1}^\infty \varphi(z_i\overline{w_i}) \right) \nonumber \\
& = & \prod_{i=1, i\not=j}^\infty  \varphi(z_i\overline{w_i}) \left(  \partial^\varphi_j \varphi(z_j\overline{w_j}) \right)  \nonumber \\
& = & \prod_{i=1, i\not=j}^\infty  \varphi(z_i\overline{w_i}) \left(  \partial^\varphi_j \sum_{k=0}^\infty \varphi_k  (z_j\overline{w_j})^k \right)  \nonumber \\
& = & \prod_{i=1, i\not=j}^\infty  \varphi(z_i\overline{w_i}) \left(   \sum_{k=1}^\infty \frac{\varphi_{k-1}}{\varphi_k} \varphi_k  z_j^{k-1} \overline{w_j}^k \right)  \nonumber \\
&= &\overline{w_j}K_\varphi(\un z,\un w).  \label{Eq:5.6}
\end{eqnarray*}

\end{proof}

\section{Strong algebras}
\setcounter{equation}{0}
In the construction of the probability space, the Wick product and stochastic processes a number of problems arise. First the Wick product needs not be a law of composition. Next, given an
$\mathbf L^2(\cS'(\R), \mathcal C,dP_\varphi)$-valued process, it is not in general differentiable in the corresponding topology. One way to handle these questions is to embed the space $\mathbf L^2(\cS'(\R), \mathcal C,dP_\varphi)$
into a Gelfand triple $S \subset \mathbf L^2(\cS'(\R), \mathcal C, dP_\varphi)\subset S^\prime$, where $S$ is a space of stochastic  test functions and $S^\prime$ is a space of stochastic distributions. We require
that $S^\prime$ has an algebra structure, of a very special kind, first introduced on an example by Kondratiev and V\r{a}ge, and later formalized in the series of
papers \cite{MR3231624,vage1,MR3038506,MR3404695}.

\begin{definition} (see \cite[p. 211-212]{MR3404695})
Let $\mathcal A$ denote an algebra which is an inductive limit of a family of
Banach spaces $\left\{X_\alpha\,;\, \alpha\in A\right\}$ directed under inclusion. We call
$\mathcal A$ a strong algebra if for every $\alpha\in A$ there exists $h(\alpha)\in A$
such that, for every $\beta\ge h(\alpha)$, there is a positive constant
$A_{\beta,\alpha}$ such that
\begin{equation}
\|ab\|_\beta \le A_{\beta,\alpha}\|a\|_{\alpha}\cdot\|b\|_{\beta}\quad and\quad  \|ba\|_\beta \le A_{\beta,\alpha}\|a\|_{\alpha}\cdot\|b\|_{\beta}
\end{equation}
for every $a\in X_\alpha$ and $b\in X_\beta$.
\end{definition}

Strong algebras are topological algebras in the sense of \cite{MR0438123}.
Topological algebras are defined to have  a product separately continuous in each variable. For a strong algebra one has:

\begin{proposition} (see \cite[Theorem 3.3, p.215]{MR3404695})
In a strong algebra if any set is bounded if and only if it is bounded in one of the $X_\alpha$, then the product is jointly continuous in the two variables.
\label{rab}
\end{proposition}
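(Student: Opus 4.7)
The plan is to reduce joint continuity of multiplication to a norm estimate inside a single Banach space $X_\beta$, exploiting both the strong algebra inequality and the regularity hypothesis that identifies $\mathcal A$-bounded sets with sets bounded in some $X_\alpha$.

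First, I would test continuity at a point $(a,b) \in \mathcal A \times \mathcal A$ using nets (or sequences) $a_n \to a$ and $b_n \to b$. The two convergent families $\{a_n\}\cup\{a\}$ and $\{b_n\}\cup\{b\}$ are bounded in $\mathcal A$, so by the regularity hypothesis there exist $\alpha_0,\gamma_0 \in A$ such that the first family sits in $X_{\alpha_0}$ and the second in $X_{\gamma_0}$, bounded there. The standard fact for regular inductive limits--that convergence in $\mathcal A$ of a net lying inside a fixed $X_\alpha$ is the same as convergence in the norm of $X_\alpha$--then yields $\|a_n - a\|_{\alpha_0} \to 0$ and $\|b_n - b\|_{\gamma_0} \to 0$.

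Second, I would use the directedness of $A$ to choose $\beta \in A$ with $\beta \ge h(\alpha_0)$ and $\beta \ge \gamma_0$. Continuity of the inclusion $X_{\gamma_0} \hookrightarrow X_\beta$ then supplies a constant $C>0$ with $\|b_n\|_\beta \le C\|b_n\|_{\gamma_0}$ uniformly bounded and $\|b_n-b\|_\beta \le C\|b_n-b\|_{\gamma_0} \to 0$. Writing
\[
a_n b_n - ab = (a_n - a) b_n + a (b_n - b),
\]
and applying the strong algebra estimate at the pair $(\alpha_0,\beta)$ to each summand yields
\[
\|a_n b_n - ab\|_\beta \le A_{\beta,\alpha_0}\bigl(\|a_n - a\|_{\alpha_0}\,\|b_n\|_\beta + \|a\|_{\alpha_0}\,\|b_n - b\|_\beta\bigr),
\]
whose right-hand side tends to zero. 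Since $X_\beta \hookrightarrow \mathcal A$ continuously, this gives $a_nb_n \to ab$ in $\mathcal A$, as required.

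To upgrade from sequential/net continuity to the neighborhood formulation, I would rerun the same computation: given $\varepsilon>0$, the sets $V = \{a' : \|a'-a\|_{\alpha_0} < \varepsilon\}$ and $W = \{b' : \|b'-b\|_\beta < \varepsilon\}$ are neighborhoods of $a$ and $b$ in $\mathcal A$ by the definition of the inductive limit topology, and the strong algebra inequality bounds $\|a'b' - ab\|_\beta$ uniformly on $V \times W$. The main obstacle is the first step--turning the mere boundedness assumption into genuine $X_{\alpha_0}$-convergence of the nets; this is precisely where the "bounded sets bounded in some $X_\alpha$" hypothesis is indispensable, since without it a convergent net in $\mathcal A$ need not be Cauchy in any single $X_\alpha$ and the estimate above would have no meaning.
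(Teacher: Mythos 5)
The paper does not prove this proposition; it is quoted from \cite[Theorem 3.3, p.~215]{MR3404695}, so your argument has to stand on its own. Its algebraic core --- splitting $a_nb_n-ab=(a_n-a)b_n+a(b_n-b)$ and invoking the strong-algebra inequality at a pair $(\alpha_0,\beta)$ with $\beta\ge h(\alpha_0)$ --- is the right estimate, but the topological bookkeeping surrounding it is where the actual content of the statement lies, and there it breaks down in three places. First, a convergent net in a locally convex space need not be bounded (only convergent \emph{sequences} are), so you cannot feed $\{a_n\}\cup\{a\}$ into the regularity hypothesis; and if you retreat to sequences, sequential continuity does not give continuity, because an inductive limit of Banach spaces is in general not first countable. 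Second, even for a sequence that is bounded in some $X_{\alpha_0}$ and convergent in $\mathcal A$, the hypothesis ``every bounded set is bounded in some $X_\alpha$'' does \emph{not} yield convergence in the norm of $X_{\alpha_0}$; that is a strictly stronger property (bounded retractivity), which the paper only asserts later, for sequences, in the nuclear Gelfand--Shilov setting of the subsequent propositions. Your phrase ``standard fact for regular inductive limits'' is doing unearned work here.

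Third, and most seriously, the attempted upgrade to the neighborhood formulation is false: the set $V=\{a':\|a'-a\|_{\alpha_0}<\varepsilon\}$ lies inside the coset $a+X_{\alpha_0}$ and so cannot be a neighborhood of $a$ in $\mathcal A$ unless $X_{\alpha_0}=\mathcal A$ (an open linear subspace is the whole space). So joint continuity --- which is a statement about genuine $\mathcal A$-neighborhoods --- is never actually established. The missing idea is how to convert the bornological information into topological information: the known route is to observe that the strong-algebra inequality makes multiplication bounded on products of bounded sets (hence hypocontinuous), and then to use structural properties of the inductive limit (bornological, (DF), and regular in the stated sense) together with Grothendieck-type results on bilinear maps on (DF)-spaces to conclude joint continuity. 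Without some argument at that level, the proposition is not proved.
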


\begin{proposition} (see \cite[Proposition 2 p 59, with Th\'eor\`eme 3. p. 49]{GS2})
In a nuclear strong algebra, a set is (weakly or strongly) compact if and only if it is closed and bounded, and it is then included, and compact, in one of the
spaces $\mathcal H_p$.
\label{mur}
\end{proposition}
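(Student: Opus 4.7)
The plan is to treat this as essentially an application of two classical features of nuclear inductive limits as developed in Gelfand--Shilov, checking that a nuclear strong algebra fits into the Gelfand--Shilov framework so that their Proposition 2 and Théorème 3 apply.

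The easy half is that any compact subset $B \subset \mathcal A$ is automatically closed (since $\mathcal A$ is Hausdorff) and bounded (continuous image of the continuous scalar multiplication on $[0,1] \times B$). This requires neither nuclearity nor the strong algebra structure. For the nontrivial direction, suppose $B$ is closed and bounded in $\mathcal A$. First I would verify that the hypothesis of Proposition \ref{rab} is available in the nuclear setting, i.e.\ that bounded subsets of the nuclear inductive limit are bounded in one of the Banach steps $\mathcal H_p$; this is the content of Théorème 3 on p.~49 of Gelfand--Shilov (the standard ``regular inductive limit'' property that every nuclear countable inductive limit of Banach spaces enjoys). So one obtains an index $p$ with $B \subset \mathcal H_p$ and $B$ norm-bounded there.

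Next I would use nuclearity of the inductive system proper: by definition, for each $p$ there exists $q \ge p$ such that the linking map $\mathcal H_p \hookrightarrow \mathcal H_q$ is nuclear, hence compact. Applied to our bounded set, this makes $B$ relatively compact in $\mathcal H_q$, and therefore in $\mathcal A$ (whose topology restricted to $\mathcal H_q$ is coarser than the norm topology). Closedness of $B$ in $\mathcal A$ then upgrades relative compactness to compactness, first in $\mathcal H_q$ (since $B$ is closed in $\mathcal A$ and $\mathcal H_q \hookrightarrow \mathcal A$ is continuous, $B$ is closed in $\mathcal H_q$) and then in $\mathcal A$. This simultaneously proves the second half of the statement, namely that a compact set sits inside, and is compact in, one of the spaces $\mathcal H_p$.

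Finally, the coincidence of weak and strong compactness is Proposition 2, p.~59 of Gelfand--Shilov: on a nuclear space the weak and strong topologies induce the same convergent sequences on bounded sets, and in particular the same compact sets. So weak compactness of $B$ is equivalent to strong compactness and both are equivalent to $B$ being closed and bounded by the argument above.

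The main obstacle I anticipate is not in any single step but in justifying the passage from the abstract definition of a strong algebra (an inductive limit indexed by an arbitrary directed set $A$) to the classical setup of Gelfand--Shilov, which assumes a countable nuclear (LF)-structure. In concrete applications (Kondratiev-type spaces of stochastic distributions), $A$ is naturally $\mathbb N$ with $\mathcal H_p$ weighted $\ell^2$-spaces, and nuclearity of the embeddings is checked directly from the weight growth; with that reduction in hand, the argument above is essentially a verification that nuclear + strong algebra $\Rightarrow$ Montel, which is the content of the proposition.
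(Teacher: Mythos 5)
The paper offers no proof of this proposition: it is quoted directly from Gelfand--Shilov, with the two cited results (Th\'eor\`eme 3, p.~49 and Proposition 2, p.~59) doing exactly the work you assign them, so your reconstruction---regularity of the nuclear inductive limit to localize a bounded set in some $\mathcal H_p$, nuclearity of the linking map $\mathcal H_p\hookrightarrow\mathcal H_q$ to get relative compactness there, closedness pulled back along the continuous inclusion to upgrade to compactness, and the weak/strong equivalence from Proposition 2---is correct and follows the same route. Your concern about the arbitrary directed index set in the general definition of a strong algebra is resolved by the paper's own standing simplification, stated immediately after the proposition, that $\mathcal A$ is a countable nuclear inductive limit of Hilbert spaces $\mathcal H_p$.
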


Strong algebras include Banach algebras, but are really of interest in an ``orthogonal case'', where the algebra is the dual of a nuclear Fr\'echet space, and where
in particular, a set is compact if and only if it is closed and bounded (the latter of course never happens in infinite dimensional normed spaces).\\

To simplify the presentation we assume that $\mathcal A$ is an inductive limit of an increasing sequence of Hilbert spaces $\mathcal H_p$, with
decreasing norms $\|\cdot\|_p$, $p=0,1,\ldots$, and that the limit is nuclear (following \cite{GS2} one could assume the space perfect). The
space $\mathcal A$ is then in particular reflexive and not separable, but the following holds for sequences:

\begin{proposition} (see \cite[Th\'eor\`eme 4, p. 58]{GS2})
If a sequence converges (strongly or weakly) in $\mathcal A$, it is contained after a certain rank in one of the spaces $\mathcal H_p$ and converges
in the norm $\|\cdot\|_p$.
\end{proposition}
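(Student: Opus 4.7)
I would reduce the statement to an application of Proposition \ref{mur}. The starting observation is that a convergent sequence together with its limit is a sequentially compact, hence compact, subset of $\mathcal A$: if $(a_n)$ converges (strongly or weakly) to $a$ in $\mathcal A$, then $S:=\{a_n\,:\,n\in\N\}\cup\{a\}$ is, respectively, strongly or weakly compact. In the nuclear reflexive setting at hand, in which $\mathcal A$ arises as the dual of a nuclear Fréchet space, weak convergence of a sequence already suffices, as $\mathcal A$ is angelic and weakly compact sequences are compact.

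Once $S$ is identified as a compact subset of $\mathcal A$, Proposition \ref{mur} applies. It produces an index $p$ such that $S$ is contained in $\mathcal H_p$ and is even compact in the norm topology $\|\cdot\|_p$. In particular every $a_n$ and the limit $a$ lie in $\mathcal H_p$, which gives the first half of the conclusion (in fact for all $n$, not merely after a certain rank).

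To promote compactness to convergence in the norm $\|\cdot\|_p$, I would argue by subsequences. Any subsequence of $(a_n)$ inherits relative compactness in $\mathcal H_p$, so it admits a further subsequence converging in $\|\cdot\|_p$ to some $b\in\mathcal H_p$. Because the inclusion $\mathcal H_p\hookrightarrow\mathcal A$ is continuous and $\mathcal A$ is Hausdorff, $b$ must coincide with the $\mathcal A$-limit $a$. Since every subsequence of $(a_n)$ has a $\|\cdot\|_p$-convergent further subsequence with the same limit $a$, the whole sequence $(a_n)$ converges to $a$ in $\|\cdot\|_p$.

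The main obstacle is the weak case: one must be sure that weak convergence of a sequence in $\mathcal A$ yields a weakly compact set, and that Proposition \ref{mur} then places this set inside some $\mathcal H_p$ with the norm topology, not merely with the weak topology. Both points rely on the nuclearity assumption (equivalently, on the Gel'fand--Shilov framework underlying Proposition \ref{mur}), which ensures that weakly and strongly compact sets coincide and that each $\mathcal H_p$ embeds compactly into the next, so that norm-boundedness in $\mathcal H_p$ together with weak convergence inside $\mathcal A$ upgrades to norm convergence in a slightly larger $\mathcal H_{p'}$ by the usual compact-embedding trick.
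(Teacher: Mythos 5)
The paper does not actually prove this proposition: it is quoted, like Proposition \ref{mur}, directly from Gel'fand--Shilov \cite[Th\'eor\`eme 4, p.~58]{GS2}, so there is no internal proof to compare against. Your derivation of the sequential statement from the compactness statement of Proposition \ref{mur} is nevertheless correct and gives a self-contained argument within the paper's framework. The key observation is sound: a sequence converging (strongly or weakly) to $a$, together with $a$ itself, is compact in the corresponding topology of $\mathcal A$ --- this needs no appeal to angelicity, only the elementary fact that a convergent sequence plus its limit is compact in any topological space --- and Proposition \ref{mur} then places this set inside some $\mathcal H_p$ and makes it $\|\cdot\|_p$-compact there. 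Your subsequence argument, using continuity of the inclusion $\mathcal H_p\hookrightarrow\mathcal A$ (automatic for an inductive limit) and the Hausdorff property of the weak topology to identify every subsequential $\|\cdot\|_p$-limit with the $\mathcal A$-limit, correctly upgrades compactness to norm convergence of the full sequence; you even obtain slightly more than claimed, namely that \emph{all} terms, not merely those after a certain rank, lie in $\mathcal H_p$. Two minor points: the closing paragraph's ``compact-embedding into a larger $\mathcal H_{p'}$'' is redundant once Proposition \ref{mur} has been invoked, and since in \cite{GS2} the two cited results sit side by side, your argument should be read as a legitimate reorganization of that theory rather than an independent proof of it.
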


The previous result is very important to study the continuity of functions from a metric space into $\mathcal A$.\\

We now construct a family of strong algebra associated to $\cH(K)$. We take a sequence $\un a = (a_n)$ such that
  \begin{equation}
    a_n a_m \leq a_{n+m}
  \end{equation}
  and
  \begin{equation}
\sum_n a_n^{-d} < \infty \mbox{\text~for~some fixed integer~}d>1. \label{Eq:4.003}
  \end{equation}
Define now $\un a^{\un \alpha} := a_1^{\alpha_1} a_2^{\alpha_2} \cdots.$ Then we have for all $\un \alpha, \un \beta \in L,$ where $L$ is the set of multi-indexes $\un \alpha =(\alpha_1, \alpha_2, \ldots ),$
$$\un a^{\un \alpha} \un a^{\un \beta} \leq \un a^{\un \alpha+\un \beta}.$$
Given a second sequence $\un b = (b_n)$ satisfying also to $b_n b_m \leq b_{n+m}$, and satisfying
$$\sum_{n} b_n^{-d}  < \infty$$
where $d$ is as in \eqref{Eq:4.003}.\smallskip

We construct the weights $\un d_{n, \un \alpha}$ as
\begin{gather}
\un d_{n, \un \alpha} := b_{n} \un a^{\un \alpha},\quad{\rm for}\quad |\un\alpha|=n.
\label{Eq:4.004}
\end{gather}

\begin{definition}
Let $p=1,2,\ldots$.
We denote by $\mathcal H_p(a,b)$ the space of power series
\[
\sum_{n=0}^\infty\sum_{|\un\alpha|=n}f_{\un\alpha}\un z^{\un\alpha}
\]
such that
\begin{gather}
\| f \|_p := \sum_{n=0}^\infty b_n^{-p}\left(\sum_{|\un\alpha|=n}  |f_{\un \alpha}|^2 \un a_{\un\alpha}^{-p}\right)<\infty. \label{Eq:4.005}
\end{gather}
\end{definition}

We define the convolution
\[
f\ast g=\sum_{n=0}^\infty\left(\left(\sum_{\substack{\un \alpha + \un \beta=\un \gamma\\ |\un\gamma|=n}}f_{\un\alpha}g_{\un\beta}\right)\un z^{\un\gamma}\right).
\]

\begin{proposition}
\label{Lem:4.001}
Let $d$ be as in \eqref{Eq:4.003} and \eqref{Eq:4.004}.
Let $p,q\in\mathbb N$ be such that $p-q>d$, and let $f\in\mathcal H_q(a,b)$ and $g\in\mathcal H_p(a,b)$. We have
\begin{gather}
\| f \ast g \|_p \leq A_{p, q} \| f  \|_q \|  g \|_p, \label{Eq:4.0051}
\end{gather}
where
\[
A_{p,q}=\left( \sum_{\un \alpha} \un a_{\un \alpha}^{-(p-q)} \right) \left( \sum_{n=0}^\infty b_{n}^{-(p-q)}\right).
\]
\end{proposition}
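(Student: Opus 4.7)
\textbf{Proof plan for Proposition \ref{Lem:4.001}.}

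The natural first step is to repackage the weights so that the indexing is uniform in $\un\alpha$. Set $B_{\un\alpha} := b_{|\un\alpha|}\,\un a_{\un\alpha}$, so that $\|f\|_p = \sum_{\un\alpha} B_{\un\alpha}^{-p}|f_{\un\alpha}|^2$. The two submultiplicativity hypotheses $a_n a_m \le a_{n+m}$ and $b_n b_m \le b_{n+m}$ combine to give $B_{\un\alpha} B_{\un\beta} \le B_{\un\alpha+\un\beta}$, and consequently, for any $r>0$ and any decomposition $\un\alpha + \un\beta = \un\gamma$, the reversed inequality $B_{\un\gamma}^{-r} \le B_{\un\alpha}^{-r} B_{\un\beta}^{-r}$.

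With $F_{\un\gamma} := \sum_{\un\alpha+\un\beta=\un\gamma} f_{\un\alpha} g_{\un\beta}$, the plan is to bound $B_{\un\gamma}^{-p/2}|F_{\un\gamma}|$ by first distributing the global weight into the sum via the pointwise estimate $B_{\un\gamma}^{-p/2} \le B_{\un\alpha}^{-p/2} B_{\un\beta}^{-p/2}$ (valid termwise for every $(\un\alpha,\un\beta)$ in the sum), then applying a single Cauchy--Schwarz with an asymmetric weight that splits the needed $-q$ on the $f$-side from the needed $-p$ on the $g$-side. Concretely, write
\begin{equation*}
|f_{\un\alpha}||g_{\un\beta}|\,B_{\un\alpha}^{-p/2}B_{\un\beta}^{-p/2}
= B_{\un\alpha}^{-(p-q)/2}\cdot\bigl(|f_{\un\alpha}|B_{\un\alpha}^{-q/2}\bigr)\bigl(|g_{\un\beta}|B_{\un\beta}^{-p/2}\bigr),
\end{equation*}
and apply Cauchy--Schwarz on $\{\un\alpha+\un\beta=\un\gamma\}$ with the first factor as the ``weight'' square-root; squaring gives
\begin{equation*}
B_{\un\gamma}^{-p}|F_{\un\gamma}|^2 \;\le\; \Bigl(\sum_{\un\alpha+\un\beta=\un\gamma} B_{\un\alpha}^{-(p-q)}\Bigr)\Bigl(\sum_{\un\alpha+\un\beta=\un\gamma} |f_{\un\alpha}|^2 B_{\un\alpha}^{-q}\,|g_{\un\beta}|^2 B_{\un\beta}^{-p}\Bigr).
\end{equation*}

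The first parenthesis is then bounded uniformly in $\un\gamma$ by $\sum_{\un\alpha} B_{\un\alpha}^{-(p-q)}$. Separating the two layers of weights gives
\begin{equation*}
\sum_{\un\alpha} B_{\un\alpha}^{-(p-q)} = \sum_{n=0}^\infty b_n^{-(p-q)}\!\!\sum_{|\un\alpha|=n}\un a_{\un\alpha}^{-(p-q)} \le \Bigl(\sum_{n=0}^\infty b_n^{-(p-q)}\Bigr)\Bigl(\sum_{\un\alpha} \un a_{\un\alpha}^{-(p-q)}\Bigr) = A_{p,q}.
\end{equation*}
Finally, summing over $\un\gamma$ and using Fubini--Tonelli to decouple the $\un\alpha$ and $\un\beta$ sums yields $\|f\ast g\|_p \le A_{p,q}\|f\|_q\|g\|_p$.

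The one nontrivial point is the choice of the Cauchy--Schwarz weight. It has to be asymmetric in $(\un\alpha,\un\beta)$ in order to generate the asymmetry between the exponents $q$ (on $f$) and $p$ (on $g$) in the final bound, and the exponent $-(p-q)/2$ is dictated by the requirement that the ``entropy'' factor $\sum B_{\un\alpha}^{-(p-q)}$ be finite, which is precisely where the hypothesis $p-q>d$ enters: it ensures convergence of both $\sum_n b_n^{-(p-q)}$ and $\sum_{\un\alpha} \un a_{\un\alpha}^{-(p-q)} = \prod_j (1-a_j^{-(p-q)})^{-1}$ via condition \eqref{Eq:4.003}. The rest is bookkeeping.
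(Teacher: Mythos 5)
Your argument is correct, and it reaches the inequality by a genuinely different organization than the paper's. You fix $\un\gamma$, apply a \emph{single} Cauchy--Schwarz on the set $\{\un\alpha+\un\beta=\un\gamma\}$ with the asymmetric weight $B_{\un\alpha}^{-(p-q)/2}$, bound the resulting ``entropy'' factor uniformly by $\sum_{\un\alpha}B_{\un\alpha}^{-(p-q)}\le A_{p,q}$, and then decouple the remaining double sum by Tonelli into $\|f\|_q\,\|g\|_p$ --- in effect a Schur-test argument for the convolution. The paper instead expands $\bigl|\sum_{\un\alpha+\un\beta=\un\gamma}f_{\un\alpha}g_{\un\beta}\bigr|^2$ into a quadruple sum over $(\un\alpha,\un\beta,\un\alpha',\un\beta')$, regroups it as an outer sum over $(\un\alpha,\un\alpha')$ and an inner sum over $\un\gamma\ge\un\alpha,\un\alpha'$, and applies Cauchy--Schwarz twice: once on the inner block to extract $\|g\|_p^2$ and once on the outer block to extract $A_{p,q}\|f\|_q^2$; this is V\r{a}ge's original computation. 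Both proofs rest on exactly the same two ingredients --- the submultiplicativity $B_{\un\alpha}B_{\un\beta}\le B_{\un\alpha+\un\beta}$ (equivalently $\un d_{n,\un\alpha}\un d_{m,\un\beta}\le \un d_{n+m,\un\alpha+\un\beta}$) and the exponent split $p=(p-q)+q$ on the $f$-side, with $p-q>d$ and \eqref{Eq:4.003} guaranteeing $A_{p,q}<\infty$ --- but your single-application version is shorter and avoids the bookkeeping of the quadruple sum. One cosmetic point: like the paper's own computation, what you actually prove is $\|f\ast g\|_p^2\le A_{p,q}\|f\|_q^2\|g\|_p^2$, i.e. the constant $\sqrt{A_{p,q}}$ at the level of the norms themselves; this is harmless (one checks $A_{p,q}\ge 1$, since the terms $\un\alpha=\un 0$ and $n=0$ already contribute at least $1$), and the discrepancy originates in the paper's definition \eqref{Eq:4.005}, which is ambiguous as to whether $\|\cdot\|_p$ denotes the norm or its square.
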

\begin{proof}
From \cite[Proposition 2.2]{MR1408433}, we have that, if $a_1 >1$, then
$$\sum_{\un \alpha} \un a_{\un \alpha}^{-(p-q)} < \infty, \quad p-q>d.$$

The proof follows the same lines as in \cite{MR1408433,vage1},
and originates with the work of V{\aa}ge \cite{vage96}. First, notice that
\[
\un d_{n+m, \un \alpha +\un \beta}=b_{n+m}\un a_{\un\alpha+\un\beta} \geq b_{n} b_{m} \un a_{\un \alpha}\un a_{\un \beta} = \un d_{n, \un \alpha} \un d_{m, \un \beta}.
\]
We have:
\[
\begin{split}
  \| f \ast g \|_p^2 &= \sum_{n=0}^\infty b_n^{-p}\left( \sum_{\substack{\un \alpha +\un \beta = \un \gamma\\ |\un\gamma|=n}}  | f_{ \un \alpha} g_{\un \beta}  |^2 a_{\un \gamma}^{-p}\right)\\
  & \leq \sum_{n=0}^\infty  \left(\sum_{\substack{\un\alpha+\un\beta=\un \gamma \\ |\un\gamma|=n}}\sum_{\substack{\un\alpha^\prime+\un\beta^\prime=\un \gamma \\ |\un\gamma|=n}}
    b^{-p/2}_{|\un\alpha|+|\un\beta|}    |f_{\un\alpha}||f_{\un\alpha^\prime}| \un a^{-p/2}_{\un\alpha} \un a^{-p/2}_{\un\alpha^\prime}
    b^{-p/2}_{|\un\alpha^\prime|+|\un\beta^\prime|}    |g_{\un\beta}||g_{\un\beta^\prime}|\un a^{-p/2}_{\un\beta}\un a^{-p/2}_{\un\beta^\prime}\right)\\
  & \leq \sum_{n=0}^\infty  \left(\sum_{\substack{\un\alpha+\un\beta=\un \gamma \\ |\un\gamma|=n}}\sum_{\substack{\un\alpha^\prime+\un\beta^\prime=\un \gamma \\ |\un\gamma|=n}}
    b^{-p/2}_{|\un\beta|}
    |f_{\un\alpha}||f_{\un\alpha^\prime}| \un a^{-p/2}_{\un\alpha}\un a^{-p/2}_{\un\alpha^\prime}
    b^{-p/2}_{|\un\alpha^\prime|}b^{-p/2}_{|\un\beta^\prime|}
    |g_{\un\beta}||g_{\un\beta^\prime}|\un a^{-p/2}_{\un\beta}\un a^{-p/2}_{\un\beta^\prime}\right)\\
  & = \sum_{\un\alpha,\un\alpha^\prime}   b^{-p/2}_{|\un\alpha|}\un a^{-p/2}_{\un\alpha} b^{-p/2}_{|\un\alpha^\prime|} \un a^{-p/2}_{\un\alpha^\prime}    |f_{\un\alpha}||f_{\un\alpha^\prime}|  \underbrace{ \left(
  \sum_{\substack{\un\gamma\ge\un \alpha\\
      \un\gamma\ge \un\alpha^\prime}} \left(b^{-p/2}_{|\un\gamma|-|\un\alpha|} \right)  \left(b^{-p/2}_{|\un\gamma|-|\un\alpha^\prime|}\right)    |g_{\un\gamma-\un\alpha}||g_{\un\gamma-\un\alpha^\prime}|\un a^{-p/2}_{\un\gamma-
  \un\alpha}\un a^{-p/2}_{\un\gamma-\un\alpha^\prime}\right)}_{\mathbf X}.
\end{split}
\]
Using the Cauchy-Schwarz inequality we have:
\[
\begin{split}
  \mathbf X  &=\sum_{\substack{\un\gamma\ge\un \alpha\\
      \un\gamma\ge \un\alpha^\prime}} \left(b^{-p/2}_{|\un\gamma|-|\un\alpha|} \right)  \left(b^{-p/2}_{|\un\gamma|-|\un\alpha^\prime|}\right)    |g_{\un\gamma-\un\alpha}||g_{\un\gamma-\un\alpha^\prime}|a^{-p/2}_{\un\gamma-
  \un\alpha}a^{-p/2}_{\un\gamma-\un\alpha^\prime}
\\
&\le \left(  \sum_{\substack{\un\gamma\ge\un \alpha\\
      \un\gamma\ge \un\alpha^\prime}} \left(b^{-p}_{|\un\gamma|-|\un\alpha|} \right)|g_{\un\gamma-\un\alpha}|^2\un a^{-p}_{\un\gamma - \un\alpha}\right)^{1/2}
\left(  \sum_{\substack{\un\gamma\ge\un \alpha^\prime\\
      \un\gamma\ge \un\alpha^\prime}} \left(b^{-p}_{|\un\gamma|-|\un\alpha^\prime|} \right)|g_{\un\gamma-\un\alpha^\prime}|^2\un a^{-p}_{\un\gamma - \un\alpha^\prime}\right)^{1/2} \le\|g\|_p^2.
%
%
  %
%    \sum_{n+m=k, \un \alpha +\un \beta = \un \gamma} | f_{n, \un \alpha} | d_{n, \un \alpha}^{-p/2}   | g_{m, \un \beta} | d_{m, \un \beta}^{-p/2} \right)^2 \\
%  = \sum_{(k, \un \gamma)}  \left( \sum_{n+m=k, \un \alpha +\un \beta = \un \gamma} | f_{n, \un \alpha} | d_{n, \un \alpha}^{-p/2}   | g_{m, \un \beta} | d_{m, \un \beta}^{-p/2} \right) \left(
%  \sum_{n'+m'=k, \un \alpha' +\un \beta' = \un \gamma} | f_{n', \un \alpha'} | d_{n', \un \alpha'}^{-p/2}   | g_{m', \un \beta'} | d_{m', \un \beta'}^{-p/2} \right) \\
%=  \sum_{(n, \un \alpha)}  | f_{n, \un \alpha} | d_{n, \un \alpha}^{-p/2}  | f_{n, \un \alpha} | d_{n, \un \alpha}^{-p/2}   \left( \sum_{k, k' \geq n, \un \gamma, \un \gamma' \geq  \un \alpha}
%  |g_{k-n, \un \gamma - \un \alpha}| ~ d_{k-n, \un \gamma - \un \alpha}^{-p/2}     | g_{k'-n, \un \gamma' - \un \alpha} | ~d_{k'-n, \un \gamma' - \un \alpha}^{-p/2} \right)\\
%\leq \left( \sum_{(n, \un \alpha)}  | f_{n, \un \alpha} |^2 d_{n, \un \alpha}^{-p} \right)  \left( \sum_{(m, \un \beta)} | g_{m, \un \beta} | d_{m, \un \beta}^{-p/2} \right) \left( \sum_{(m',
%  \un \beta')} | g_{m', \un \beta'} | d_{m', \un \beta'}^{-p/2} \right),
\end{split}
\]
On the other hand, and using again the Cauchy-Schwarz inequality we have:
\[
\begin{split}
\sum_{\un\alpha,\un\alpha^\prime}   b^{-p/2}_{|\un\alpha|}\un a^{-p/2}_{\un\alpha} b^{-p/2}_{|\un\alpha^\prime|}  \un a^{-p/2}_{\un\alpha^\prime}    |f_{\un\alpha}||f_{\un\alpha^\prime}|&=
\left(\sum_{\un\alpha}   b^{-p/2}_{|\un\alpha|}\un a^{-p/2}_{\un\alpha} b^{-p/2}|f_{\un\alpha}| \right)^2\\
&=  \left(\sum_{\un\alpha}   b^{-q/2}_{|\un\alpha|}b^{(q-p)/2}_{|\un\alpha|}
\un a^{-q/2}_{\un\alpha} \un a^{(q-p)/2}_{\un\alpha} |f_{\un\alpha}| \right)^2\\
&\le\left(\sum_{\un\alpha}\un a_{\un\alpha}^{q-p}b_{|\alpha|}^{q-p}\right)\left(\sum_{\un\alpha}b^{-q}_{|\un\alpha|}\un a_{\un\alpha}^{-q}|f_{\un\alpha}|^2\right)\\
&=A_{p,q}\|f\|_q^2
\end{split}
\]
with
\[
A_{p,q}=\left(\sum_{\un\alpha}\un a_{\un\alpha}^{q-p}b_{|\alpha|}^{q-p}\right)=  \left( \sum_{\un \alpha} \un a_{\un \alpha}^{-(p-q)} \right) \left( \sum_{n} b_{n}^{-(p-q)}\right)<\infty.
\]

In conclusion, we do get an associated strong algebra.
\end{proof}

The freedom in the choice of the sequences $(a_n)$ and $(b_n)$ allows  to adapt the method to various specific situations.

%\section{Stochastic distribution}

\section{Stochastic processes}
\setcounter{equation}{0}
\label{stochas}
As is well known one can associate to every positive definite function on a set, say $S$, a Gaussian stochastic process indexed by the given set.
This is Lo\`eve's Theorem; see e.g. \cite{neveu68}. To construct the process one can proceed as follows: Let
$K(t,s)$ be positive definite on the set $S$,
and let $\mathcal H(K)$ be the corresponding reproducing kernel Hilbert space. Let
\[
K(t,s)=\sum_{a\in A}e_a(t)\overline{e_a(s)},
\]
where $(e_a(t))_{a\in A}$ (where $A$ need not be countable) is an orthonormal basis of $\mathcal H(K)$.
Build a probability space $\Omega=\prod_{a\in A}\Omega_a$ where we can  define a family of independent $N(0,1)$ variables $(Z_a)_{a\in A}$. See
\cite[pp. 38-39]{neveu68} for the latter.
The second order stochastic process
\[
X_t(\w)=\sum_{a\in A} e_a(t)Z_a(\w)
\]
has covariance function $K(t,s)$.\\

The connection with the present (non even Gaussian setting) is to assume $A$ countable, take $\Omega$ to be the probability space
$(\mathcal S^\prime(\R),\mathcal C, dP_\varphi)$ and to replace the orthonormal random variables $Z_a$ by
\[
Q_j=Q^{(0,0,\ldots, 0,1,0,0,\ldots)}
\]
where the $1$ is at the $j$-th coordinate in $(0,0,\ldots, 0,1,0,0,\ldots)$. Note that the $Z_a$ are pairwise independent, since Gaussian, but the $Q_j$
are not independent.\smallskip

We then consider

\begin{equation}
X_t(\w)=\sum_{j=1}^\infty e_j(t)Q_j(\w)
\end{equation}

To make a more precise study one proceeds as follows to construct the process defined by \eqref{qerty123}:

\begin{theorem}
It holds that
\begin{equation}
\sqrt{\varphi_1}X^\varphi_t(\w)=\sum_{j=1}^\infty \left(\int_0^t\zeta_j(v)dv\right)Q_j(\w)
\end{equation}
where the convergence is in $\mathbf L^2(\mathcal S^\prime(\mathbb R),\mathcal C,dP_\varphi)$.
\end{theorem}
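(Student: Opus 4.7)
The plan is to reduce the claim to Bessel--Parseval in $\mathbf L^2(\R,dx)$ together with the isometric extension of the map $s\mapsto Q_s$ given in Remark \ref{r567}.

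First I would rewrite the left-hand side using the very definition \eqref{qerty123}:
\[
\sqrt{\varphi_1}\,X^\varphi_t(\w)=Q_{\mathbf 1_{[0,t]}}(\w),
\]
where $Q_{\mathbf 1_{[0,t]}}$ is interpreted via the $\mathbf L^2(\R,dx)$-extension of $s\mapsto Q_s$, since $\mathbf 1_{[0,t]}$ is not in $\cS(\R)$. Next, since $(\zeta_j)_{j\ge 1}$ is an orthonormal basis of $\mathbf L^2(\R,dx)$, the indicator $\mathbf 1_{[0,t]}$ has the Fourier expansion
\[
\mathbf 1_{[0,t]}=\sum_{j=1}^\infty \langle \mathbf 1_{[0,t]},\zeta_j\rangle_2\,\zeta_j=\sum_{j=1}^\infty\left(\int_0^t\zeta_j(v)\,dv\right)\zeta_j,
\]
converging in $\mathbf L^2(\R,dx)$.

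Now set $f_N:=\sum_{j=1}^N\bigl(\int_0^t\zeta_j(v)\,dv\bigr)\zeta_j\in\cS(\R)$. By linearity of $s\mapsto Q_s$ on $\cS(\R)$,
\[
Q_{f_N}(\w)=\sum_{j=1}^N\left(\int_0^t\zeta_j(v)\,dv\right)Q_{\zeta_j}(\w)=\sum_{j=1}^N\left(\int_0^t\zeta_j(v)\,dv\right)Q_j(\w),
\]
with the last equality being just the notational convention that $Q_j=Q_{\zeta_j}$. Applying the isometry \eqref{isometry},
\[
\bigl\|Q_{\mathbf 1_{[0,t]}}-Q_{f_N}\bigr\|_{P_\varphi}^2=\varphi_1\bigl\|\mathbf 1_{[0,t]}-f_N\bigr\|_{2}^2\longrightarrow 0\quad\text{as}\quad N\to\infty,
\]
by Parseval. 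Combining this with the identification of the left-hand side as $Q_{\mathbf 1_{[0,t]}}(\w)$ gives the announced convergence in $\mathbf L^2(\cS^\prime(\R),\mathcal C,dP_\varphi)$.

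There is really no serious obstacle here; the only point that requires a moment of care is that $\mathbf 1_{[0,t]}\notin\cS(\R)$, so $Q_{\mathbf 1_{[0,t]}}$ must be understood through the extension guaranteed by \eqref{isometry}. Once this is observed, the partial sums $f_N$ lie in $\cS(\R)$ where $Q$ is defined pointwise and linear, and the whole statement follows from the density of $\cS(\R)$ in $\mathbf L^2(\R,dx)$ together with the isometry constant $\sqrt{\varphi_1}$.
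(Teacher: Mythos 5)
Your proof is correct and follows essentially the same route as the paper's: expand $\mathbf 1_{[0,t]}$ in the Hermite basis of $\mathbf L^2(\R,dx)$ and transport the expansion through the isometric extension of $s\mapsto \frac{1}{\sqrt{\varphi_1}}Q_s$ from Remark \ref{r567}. You merely spell out the partial-sum/Parseval step that the paper leaves implicit.
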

\begin{proof}
We have
\[
1_{[0,t]}(u)=\sum_{j=1}^\infty\left(\int_0^t\zeta_j(v)dv\right)\zeta_j(u)
\]
and hence, by the continuity of the isometry $s\mapsto \frac{1}{\sqrt{\varphi_1}}Q_s$
\[
Q_{1_{[0,t]}}(\w)=\sum_{j=1}^\infty  \left(\int_0^t\zeta_j(v)dv\right)Q_j(\w)
\]
where the convergence is in $\mathbf L^2(\cS'(\R),\mathcal C,dP_\varphi)$.
\end{proof}
At this stage we recall the following bounds on the normalized Hermite functions; see \cite[p. 349]{bosw}, \cite[lemma 1.5.1 p.26]{MR1215939}.

\begin{lemma}
There exist strictly positive constants $C$ and $\gamma$, independent of $j$,
and such that
\begin{equation}
  |\zeta_j(t)\le\begin{cases}\, C j^{-1/12}, \hspace{2.2mm}if\,\,\,\, |t|\le 2\sqrt{j},\\
\, Ce^{-\gamma t^2},\,\,\,\,\, if\,\,\,\, |t|> 2\sqrt{j}.
\end{cases}
\end{equation}
\end{lemma}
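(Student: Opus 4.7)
The plan is to invoke the classical Plancherel-Rotach asymptotic analysis of the Hermite functions, splitting the real line at the turning points of the underlying differential equation. Recall that $\zeta_j$ satisfies the Weber equation $\zeta_j''(t) + (2j+1-t^2)\zeta_j(t) = 0$, whose classical turning points lie at $\pm\sqrt{2j+1}$. Since $\sqrt{2j+1} < 2\sqrt{j}$ for every $j\ge 1$, the cutoff $2\sqrt{j}$ sits just outside the last turning point, so the interval $|t|\le 2\sqrt{j}$ comprises the oscillatory bulk together with a thin turning-point layer, while $|t|>2\sqrt{j}$ is a strictly exterior, exponential-decay regime.

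For the bound $|\zeta_j(t)|\le Cj^{-1/12}$ on $|t|\le 2\sqrt{j}$ I would subdivide further. In the oscillatory interior $|t|\le \sqrt{2j+1}-cj^{-1/6}$, the Liouville-Green (WKB) method produces an expansion of the form
\[
\zeta_j(t) = (2j+1-t^2)^{-1/4}\bigl(A_j\cos\Theta_j(t)+B_j\sin\Theta_j(t)\bigr)+O(j^{-3/4})
\]
with $A_j,B_j$ bounded uniformly in $j$; the prefactor is $O(j^{-1/4})$, well within $Cj^{-1/12}$. In the turning-point layer $|t\mp\sqrt{2j+1}|\le cj^{-1/6}$ the rescaled equation converges to Airy's equation, and one obtains a uniform approximation $\zeta_j(t)\sim (2j+1)^{-1/12}\,\mathrm{Ai}\bigl(j^{1/6}\xi_j(t)\bigr)$; boundedness of $\mathrm{Ai}$ on $\R$ then yields exactly the $Cj^{-1/12}$ bound, which also dominates the interior estimate.

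For the Gaussian decay $|\zeta_j(t)|\le Ce^{-\gamma t^2}$ on $|t|>2\sqrt{j}$ I would invoke the exterior Plancherel-Rotach expansion
\[
\zeta_j(t) \sim (t^2-2j-1)^{-1/4}\exp\!\left(-\int_{\sqrt{2j+1}}^{|t|}\sqrt{s^2-2j-1}\,ds\right)
\]
and observe that for $|t|\ge 2\sqrt{j}$ one has $\sqrt{s^2-2j-1}\ge c\,s$ on the interval $s\in[\sqrt{2j+1},|t|]$, so the exponent is bounded below by $\gamma t^2$ for a uniform $\gamma>0$; the prefactor $(t^2-2j-1)^{-1/4}$ is $O(j^{-1/4})$ at the boundary and only improves as $|t|$ grows, so it is absorbed into the constant $C$.

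The main obstacle is ensuring the constants remain uniform in $j$ through the matching of the WKB and Airy regimes near the turning points; this is the delicate part of the Plancherel-Rotach analysis, carried out in detail in Szeg\H{o}'s classical monograph and in the Simon reference cited above.
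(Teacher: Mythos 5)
The paper offers no proof of this lemma at all: it is quoted from the literature, with pointers to \cite[p.~349]{bosw} and to Lemma 1.5.1 of Thangavelu \cite{MR1215939}. Your Plancherel--Rotach sketch is exactly the standard argument behind those references---split at the turning points $\pm\sqrt{2j+1}$ of the Weber equation, use Liouville--Green in the oscillatory bulk to get $O(j^{-1/4})$, Airy-type uniform asymptotics in the turning-point layer to get the sharp $O(j^{-1/12})$, and the exterior exponential estimate beyond---so your route is consistent with what the paper implicitly relies on, and the structure of your case split matches the statement. One small repair is needed in the exterior regime: the claim that $\sqrt{s^2-2j-1}\ge c\,s$ on all of $[\sqrt{2j+1},\,|t|]$ is false at the lower endpoint, where the integrand vanishes. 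The conclusion survives because you only need a lower bound on the integral, not on the integrand pointwise: restrict the integration to $s\in[\sqrt{3.5j},\,|t|]$, which is a nonempty subinterval of the exterior region since $|t|\ge 2\sqrt{j}=\sqrt{4j}$ and $3.5j\ge 2j+1$ for $j\ge1$; there $s^2-2j-1\ge s^2-3j\ge s^2/7$, and $\int_{\sqrt{3.5j}}^{|t|} \bigl(s/\sqrt{7}\bigr)\,ds=\bigl(t^2-3.5j\bigr)/(2\sqrt{7})\ge t^2/(16\sqrt{7})$, which yields a uniform $\gamma>0$. Beyond that, the only genuinely delicate point is the one you flag yourself: making the error terms uniform in $j$ across the matching of the WKB and Airy regimes. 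For a fully rigorous, self-contained treatment you would follow Szeg\H{o}'s monograph or Olver's uniform Airy asymptotics---or simply cite Thangavelu's Lemma 1.5.1, as the authors do.
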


In particular, there is a strictly positive constant $A$ such that
\[
|\zeta_j(t)|\le A,\quad\forall t\in\mathbb R,\,\, j=1,2,\ldots
\]

We recall that
\begin{equation}
\widehat{\zeta_j}(u)=(-1)^j\zeta_j(u),
\label{place-voltaire}
\end{equation}
where $\widehat{f}$ denotes the Fourier transform; see \eqref{ft}.
This formula shows in particular
that the specific choice of the Hermite functions is crucial. Elements of another orthonormal basis of $\mathbf L^2(\mathbb R,dx)$ made of Schwartz functions need
not satisfy these inequalities. Using \eqref{place-voltaire} and the preceding lemma one proves that (see \cite[(3.11), p. 1089]{aal2} for a more general formula):

\begin{lemma}
The Hermite functions satisfy
\begin{equation}
\label{le-louvre}
|\zeta_j(t)-\zeta_j(s)|\le |t-s|\left(C\sqrt{j}+D\right),\quad j=1,2,\ldots
\end{equation}
for $t,s\in\mathbb R$ and some positive constants $C$ and $D$.
\end{lemma}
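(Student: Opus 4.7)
The plan is to reduce the Lipschitz estimate to a uniform bound on $\zeta_j'$ and then control that derivative using the classical three-term recurrence for Hermite functions together with the uniform bound $|\zeta_j(u)|\le A$ recorded just before the lemma.

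First I would invoke the fundamental theorem of calculus to write
\[
\zeta_j(t)-\zeta_j(s)=\int_s^t \zeta_j'(u)\,du,
\]
so that
\[
|\zeta_j(t)-\zeta_j(s)|\le |t-s|\,\sup_{u\in\mathbb R}|\zeta_j'(u)|.
\]
Thus it suffices to show $\sup_u|\zeta_j'(u)|\le C\sqrt{j}+D$ with constants independent of $j$.

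Next I would use the standard differentiation formula for normalized Hermite functions,
\[
\zeta_j'(u)=\sqrt{\tfrac{j}{2}}\,\zeta_{j-1}(u)-\sqrt{\tfrac{j+1}{2}}\,\zeta_{j+1}(u),
\]
which is a direct consequence of the recurrence $H_j'(u)=2jH_{j-1}(u)$ combined with $H_{j+1}(u)=2uH_j(u)-2jH_{j-1}(u)$ after incorporating the Gaussian weight $e^{-u^2/2}$ and the normalization constants $(2^j j!\sqrt{\pi})^{-1/2}$. Applying the preceding lemma, which guarantees $|\zeta_k(u)|\le A$ uniformly in $k$ and $u$, gives
\[
|\zeta_j'(u)|\le A\sqrt{\tfrac{j}{2}}+A\sqrt{\tfrac{j+1}{2}}\le C\sqrt{j}+D
\]
for appropriate constants $C,D>0$, independent of $j$ and $u$. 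Combining this with the first display yields \eqref{le-louvre}.

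The main obstacle is purely bookkeeping: one must verify that the recurrence formula used for $\zeta_j'$ is the correct one for the $\mathbf L^2$-normalized Hermite functions (as opposed to the Hermite polynomials), so that the coefficients $\sqrt{j/2}$ and $\sqrt{(j+1)/2}$ appear and the uniform bound on $\zeta_{j\pm 1}$ immediately provides the $\sqrt{j}$ growth rate. No delicate estimate is required beyond this, since the uniform bound $|\zeta_k|\le A$ from the previous lemma already absorbs all the pointwise behaviour.
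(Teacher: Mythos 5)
Your proof is correct, and it is more self-contained than what the paper offers: the paper does not actually prove this lemma in the text, but only indicates that it follows from the Fourier-transform identity \eqref{place-voltaire} together with the uniform bound $|\zeta_j(t)|\le A$, deferring the details to \cite[(3.11), p.~1089]{aal2}. Your route replaces the passage through the Fourier transform by the fundamental theorem of calculus combined with the ladder identity $\zeta_j'=\sqrt{j/2}\,\zeta_{j-1}-\sqrt{(j+1)/2}\,\zeta_{j+1}$ for the $\mathbf L^2$-normalized Hermite functions. Since the Fourier transform intertwines $d/du$ with multiplication by $iu$, and multiplication by $u$ obeys the companion recurrence $u\zeta_j=\sqrt{j/2}\,\zeta_{j-1}+\sqrt{(j+1)/2}\,\zeta_{j+1}$, the two arguments are essentially the same computation seen on the two sides of the transform; what yours buys is a direct, elementary derivation that makes transparent where the $\sqrt{j}$ growth comes from (the coefficients of the ladder relation) and where the uniform constant comes from (the sup bound $|\zeta_k|\le A$ recorded just before the lemma). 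The only points requiring care are exactly the ones you flag: verifying the normalization of the derivative recurrence for the normalized functions rather than the Hermite polynomials, and the lowest index, where the term with the smaller index is absent (or one checks it directly for the Gaussian); neither affects the conclusion, and the bound $\sqrt{j+1}\le\sqrt{j}+1$ closes the estimate with constants independent of $j$.
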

\begin{theorem}
Let $b_n=\frac{1}{2^n}$ and $a_n=\frac{1}{2^n}$.
The stochastic process $(X^\varphi_t)_{t\in\mathbb R}$ is differentiable in the space of stochastic distribution $\mathcal H_1(a,b)$.
\end{theorem}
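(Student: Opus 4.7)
The plan is to pass to the Fock-space side via the $\varphi$-transform, write the candidate derivative as a purely first-chaos series in the variables $z_j$, and obtain the needed estimates directly from the two Hermite-function lemmas stated just above the theorem. Starting from the representation $\sqrt{\varphi_1}\,X^\varphi_t=\sum_{j\ge 1}\!\bigl(\int_0^t\zeta_j(v)\,dv\bigr) Q_j$, the natural candidate for the derivative is
\[
Y_t:=\frac{1}{\sqrt{\varphi_1}}\sum_{j=1}^\infty \zeta_j(t)\,Q_j,
\]
which under the $\varphi$-transform (sending $Q_j\mapsto z_j$) corresponds to the first-chaos series $\sum_{j\ge 1}\zeta_j(t)\,z_j$. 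Since only multi-indices of the form $\un\alpha=e_j$, with $|\un\alpha|=1$, contribute, the $\mathcal H_1(a,b)$-norm collapses to
\[
\|Y_t\|_1 \;=\; \frac{1}{\varphi_1\,b_1}\sum_{j=1}^\infty |\zeta_j(t)|^2\, a_j^{-1},
\]
which is finite by the uniform bound $|\zeta_j(t)|\le A$ from the preceding lemma together with the summability of the geometric series $\sum_j a_j^{-1}$ under the given weights. Hence $Y_t\in\mathcal H_1(a,b)$.

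Next, for the convergence of the difference quotient I would write, still under the $\varphi$-transform,
\[
\frac{X^\varphi_{t+h}-X^\varphi_t}{h}-Y_t \;\longleftrightarrow\; \frac{1}{\sqrt{\varphi_1}}\sum_{j=1}^\infty \Bigl(\frac{1}{h}\int_t^{t+h}\!\bigl(\zeta_j(v)-\zeta_j(t)\bigr)dv\Bigr)\,z_j,
\]
and estimate each coefficient using the Lipschitz-type bound \eqref{le-louvre}, $|\zeta_j(v)-\zeta_j(t)|\le |v-t|(C\sqrt{j}+D)$, which on integration yields the pointwise bound $\tfrac{|h|}{2}(C\sqrt{j}+D)$. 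Squaring, summing in $j$ with the $\mathcal H_1(a,b)$-weight $1/(b_1 a_j)$, and dividing by $\varphi_1$, one obtains
\[
\Bigl\|\frac{X^\varphi_{t+h}-X^\varphi_t}{h}-Y_t\Bigr\|_1 \;\le\; \frac{h^2}{4\,\varphi_1\,b_1}\sum_{j=1}^\infty\frac{(C\sqrt{j}+D)^2}{a_j},
\]
and the remaining series converges since the chosen weights guarantee $\sum_j j/a_j<\infty$. Letting $h\to 0$ shows that $Y_t$ is indeed the derivative of $X^\varphi_t$ in the norm of $\mathcal H_1(a,b)$.

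The main obstacle is essentially bookkeeping: matching the polynomial growth $C\sqrt{j}+D$ of the Hermite Lipschitz constant against the geometric decay of $a_j^{-1}$ provided by the weight sequence, and checking that both $Y_t$ and the error term of the difference quotient stay purely in the first chaos, so that the multi-index sums defining $\|\cdot\|_1$ reduce to ordinary weighted $\ell^2$-sums over $j$. No deeper analysis of higher chaoses is needed, and no pathwise estimate on $X^\varphi_t(\omega)$ enters; the entire argument takes place in the coefficient space of the $\varphi$-transform, exploiting the fact that the first-chaos geometry is rigidly determined by the isometry \eqref{isometry}.
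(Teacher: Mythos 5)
Your proposal is correct and follows essentially the same route as the paper: the same candidate derivative $\frac{1}{\sqrt{\varphi_1}}\sum_j\zeta_j(t)Q_j$, the same reduction of the $\mathcal H_1(a,b)$-norm to a weighted first-chaos $\ell^2$-sum, and the same two Hermite-function lemmas (uniform bound for membership, Lipschitz bound \eqref{le-louvre} for the difference quotient). The passage to the Fock-space side via the $\varphi$-transform is only a cosmetic reformulation, and like the paper you implicitly read the weights so that the first-chaos coefficients carry geometric decay $2^{-j}$.
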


\begin{proof}
Let $t\in\mathbb R$. Our strategy is as follows. We first check that the function
\begin{equation}
\label{is345}
t\mapsto N^\varphi(t)=\frac{1}{\sqrt{\varphi_1}}\sum_{j=1}^\infty \zeta_j(t)Q_j
\end{equation}
is $\mathcal H_1(a,b)$-valued, and show that the difference
\[
\frac{X^\varphi(t+h)-X^\varphi(t)}{h}-N^\varphi(t)
\]
goes to $0$ in the topology of $\mathcal A$. By the properties of the topology in $\mathcal A$, it is enough to consider sequences and to consider
convergence in one of the spaces $\mathcal H_p(a,b)$.\smallskip

More precisely, we have
\[
\|\sum_{j=1}^\infty \zeta_j(t)Q_j\|^2=\sum_{j=1}^\infty |\zeta_j(t)|^2\frac{1}{2^j}\le A\sum_{j=1}^\infty
\frac{1}{2^j}<\infty
\]
and so the function \eqref{is345} is $\mathcal H_1(a,b)$-valued. Moreover, using \eqref{le-louvre} we obtain:
\[
\begin{split}
\left\|\frac{X^\varphi(t+h)-X^\varphi(t)}{h}-N^\varphi(t)\right\|&=\left\|\left(\sum_{j=1}^\infty\frac{\int_t^{t+h}(\zeta_j(s)-\zeta_j(t))ds}{h}\right)Q_j\right\|^2\\
&=\sum_{j=1}^\infty\frac{1}{2^j}\left|\frac{\int_t^{t+h}(\zeta_j(s)-\zeta_j(t))ds}{h}\right|^2\\
&\le\sum_{j=1}^\infty\frac{1}{2^j}\left|\frac{\int_t^{t+h}\left(C\sqrt{j}+D\right)|t-s|ds}{h}\right|^2\\
&\le\sum_{j=1}^\infty\frac{1}{2^j}\left|\frac{\int_t^{t+h}\left(C\sqrt{j}+D\right)|h|ds}{h}\right|^2\\
&=M|h|
\end{split}
\]
with
\[
M=\sum_{j=1}^\infty \frac{(C\sqrt{j}+D)^2}{2^j}<\infty.
\]
\end{proof}
%%%%%%%%%%%%

As explained in Remark \ref{r567}, the analysis from \cite{aal2,aal3,ajnfao} can be transferred, {\it mutatis mutandis} to the setting at hand.
We present some of the results, some with outline of proofs and some without proof. We focus on two applications, the first one on giving a model for a (non Gaussian) stationary increments process
and the second on a stochastic integral.

\begin{theorem} (stochastic integral)
Let $f$ be a continuous function from $[0,1]$ into $\mathcal A$. Then the integral
\[
\int_0^1f(t)\lozenge N_\varphi(t)dt
\]
converges in $\mathcal A$.
\end{theorem}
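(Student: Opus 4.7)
The strategy is to use the continuity of $f$ together with the nuclearity of the strong algebra $\mathcal{A}$ in order to confine the integrand $t\mapsto f(t)\lozenge N_\varphi(t)$ to a single Hilbert space of the inductive scale $\{\mathcal{H}_p(a,b)\}_p$, and then invoke standard Bochner (or Riemann) integration in that Banach space.

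First, since $f:[0,1]\to\mathcal{A}$ is continuous and $[0,1]$ is compact, the image $f([0,1])$ is compact in $\mathcal{A}$. By Proposition \ref{mur}, there is an index $\alpha$ with $f([0,1])\subset\mathcal{H}_\alpha(a,b)$ and
\[
M:=\sup_{t\in[0,1]}\|f(t)\|_\alpha<\infty.
\]
Combining this inclusion with the sequential description of convergence in $\mathcal{A}$ recalled just before Proposition \ref{mur}, one upgrades $f$ to a continuous map $[0,1]\to(\mathcal{H}_\alpha(a,b),\|\cdot\|_\alpha)$.

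Next, the preceding theorem and its proof show that $t\mapsto N_\varphi(t)$ is $\mathcal{H}_1(a,b)$-valued; the same Hermite estimate $|\zeta_j(t)-\zeta_j(s)|\le(C\sqrt{j}+D)|t-s|$ together with the summability of the weights in fact yields
\[
\|N_\varphi(t)-N_\varphi(s)\|_1\le K|t-s|
\]
for some constant $K$, so $N_\varphi$ is Lipschitz into $\mathcal{H}_1(a,b)$. Choose an integer $\beta\ge h(\alpha)$ large enough that $N_\varphi(t)$ remains in $\mathcal{H}_\beta(a,b)$ and $N_\varphi$ is still continuous there (automatic by the continuous inclusions in the inductive scale). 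The strong algebra inequality then gives
\[
\|f(t)\lozenge N_\varphi(t)-f(s)\lozenge N_\varphi(s)\|_\beta\le A_{\beta,\alpha}\bigl(\|f(t)-f(s)\|_\alpha\|N_\varphi(t)\|_\beta+\|f(s)\|_\alpha\|N_\varphi(t)-N_\varphi(s)\|_\beta\bigr),
\]
so the integrand is a continuous, hence uniformly bounded, $\mathcal{H}_\beta(a,b)$-valued function on $[0,1]$. The Bochner (equivalently Riemann) integral $\int_0^1 f(t)\lozenge N_\varphi(t)\,dt$ therefore converges in the Hilbert space $\mathcal{H}_\beta(a,b)$, and a fortiori in $\mathcal{A}$.

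The main obstacle I foresee is the first step: promoting continuity of $f$ as an $\mathcal{A}$-valued map to continuity into the single Hilbert space $\mathcal{H}_\alpha(a,b)$. This is precisely where the nuclearity of the strong algebra plays a decisive role; without it one could not prevent the image of $f$ from requiring arbitrarily large indices in the scale, and the integrand would never be seen to live in a single Banach space where classical vector-valued integration applies.
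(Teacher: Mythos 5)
Your proposal is correct and follows essentially the same route as the paper's (outline of a) proof: use compactness of the image together with nuclearity (Proposition \ref{mur}) to confine everything to a single Hilbert space $\mathcal H_p(a,b)$ of the scale, and then carry out the vector-valued (Riemann/Bochner) integration there, with the V\r{a}ge-type inequality of Proposition \ref{Lem:4.001} controlling the product. The only cosmetic difference is that the paper invokes joint continuity of the Wick product (Proposition \ref{rab}) to conclude directly that $t\mapsto f(t)\lozenge N_\varphi(t)$ has compact image, whereas you unwind that step explicitly by estimating $\|f(t)\lozenge N_\varphi(t)-f(s)\lozenge N_\varphi(s)\|_\beta$ with the strong-algebra inequality; both amount to the same mechanism.
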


\begin{proof}[Outline of the proof]
The product is jointly continuous (Proposition \ref{rab}) and so the image of the interval $[0,1]$ is compact (since $[0,1]$ is a compact set) and so is inside
one of the spaces $\mathcal H_p(a,b)$, say $\mathcal H_{p_0}(a,b)$. We compute then the integral as a Riemann integral in $\mathcal H_{p_0}(a,b)$,
evaluating the Riemann sums using the V\r{a}ge inequalities. This is equivalent to the convergence in the algebra itself since only sequences are involved.
\end{proof}

\begin{remark}
$f([0,1])$ is compact and so inside one of the $\mathcal H_p(a,b)$, say $\mathcal H_{p_0}(a,b)$ (see Proposition \ref{mur}).
V\r{a}ge inequalities imply that $f(t)\lozenge N^\varphi(t)$ is inside
$\mathcal H_{p_0}(a,b)$ for every $t\in[0,1]$.
\end{remark}

\section{Variation on the main theme, I}
\setcounter{equation}{0}
Let $\mu$ be a positive measure on $\mathbb R$ such that
\[
\int_{\R} \frac{d\sigma(u)}{1+u^2} <\infty, \quad \mbox{for some }p \in \N,
\]
and let $\varphi\in ML$. The function
\begin{equation}
\label{4566546}
\varphi\left(-\frac{\int_{\mathbb R} |s(u)|^2d\mu(u)}{2}\right),\quad s\in\cS(\R),
\end{equation}
is continuous in the Fr\'echet topology. By the Bochner-Minlos theorem there exists a Borel measure $\sf P_{\mu,\varphi}$ such that
\[
\varphi\left(-\frac{\int_{\mathbb R} |s(u)|^2d\mu(u)}{2}\right)=\int_{\cS^\prime(\R)}e^{i\langle\w,s\rangle}d{\sf P}_{\mu,\varphi}(\w),\quad s\in\cS(\R),
\]
We have the isometry
\[
\varphi_1\int_{\mathbb R}|s(u)|^2d\mu(u)=\langle Q_s,Q_s\rangle_{P_{\mu,\varphi}},
\]
where the inner product is in $\mathbf L^2(\mathcal S^\prime(\R),\mathcal C,\sf P_{\mu,\varphi})$.
By density we thus have for $A,B$ Borelian subsets of the real line
\[
\langle Q_{1_A},Q_{1_B}\rangle=\varphi_1\mu(A\cap B).
\]
This formula allows to develop a stochastic integral with respect to the process $(Q_{1_A})$ indexed by the Borel sets of the real line; see \cite{zbMATH06796806} for the Gaussian counterpart.
\section{Variation on the main theme, II}
\setcounter{equation}{0}

Let $\mu$ be a positive measure on $\mathbb R$ satisfying \eqref{4566546}, and let $\varphi\in ML$. The function
\begin{equation}
\label{456654}
\varphi\left(-\frac{\int_{\mathbb R} |\widehat{s}(u)|^2d\mu(u)}{2}\right),\quad s\in\cS(\R),
\end{equation}
is continuous in the Fr\'echet topology; see
\cite[(5.3)-(5.4), p. 517]{MR2793121} for the case $\varphi(z)=e^z$. The proof goes the same way. Thus there exists a probability measure such that
\begin{equation}
\label{123789}
  \varphi\left(-\frac{\int_{\mathbb R} |\widehat{s}(u)|^2d\mu(u)}{2}\right)=
  \int_{\cS^\prime(\R)}e^{i\langle\w,s\rangle}dP_{\mu,\varphi}(\w).
\end{equation}
Furthermore, there exists a continuous operator $X:\mathcal S(\R)\longrightarrow \mathbf L_2(\mathbb R,dx)$ such that
\begin{equation}
\label{mumumu}
\varphi_1\int_{\mathbb R} |\widehat{s}(u)|^2d\mu(u)=\langle X_s,X_s\rangle_{P_{\mu,\varphi}}.
\end{equation}
It follows from \eqref{123789} that the map
\[
\sqrt{\varphi_1}\widehat{s}\mapsto Q_s
\]
is an isometry from $\mathbf L^2(\mathbb R,d\mu)$ into the space $\mathbf L^2(\cS^\prime(\R), \mathcal C,P_{\mu\varphi})$. It extends to the whole of
$\mathbf L^2(\mathbb R,d\mu)$ since the linear span of the functions $u\mapsto \frac{e^{iut}-1}{u}$ is dense in $\mathbf L^2(\mathbb R,dx)$ when $t$ runs
through $\mathbb R$; see e.g. \cite[Exercise 6.3.2 p. 302]{CAPB_2} for a proof of this well known fact.

\begin{theorem}
The process
\[
X_t=Q_{1_{[0,t]}},\quad t\in\mathbb R,
\]
has correlation function
\begin{equation}
\label{totoche}
K(t,s)=\int_{\mathbb R}\frac{e^{itu}-1}{u}\frac{e^{-ius}-1}{u}d\mu(u)
\end{equation}
\end{theorem}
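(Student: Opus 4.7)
The plan is to reduce the claim to the polarized form of the isometry \eqref{mumumu}, extended by continuity to indicator functions, after explicit computation of the Fourier transforms of $1_{[0,t]}$. First I would polarize \eqref{mumumu} on the test-function space: for $s_1,s_2\in\mathcal S(\R)$ one obtains
\[
\varphi_1\int_{\mathbb R}\widehat{s_1}(u)\overline{\widehat{s_2}(u)}\,d\mu(u)=\langle Q_{s_1},Q_{s_2}\rangle_{P_{\mu,\varphi}}.
\]
The extension argument recorded just before the theorem — density in $\mathbf L^2(\R,dx)$ of the linear span of the functions $u\mapsto(e^{iut}-1)/u$, combined with the integrability hypothesis on $\mu$ — allows one to extend this identity by continuity to $s_1,s_2\in\mathbf L^2(\R,dx)$ with Fourier transforms in $\mathbf L^2(\R,d\mu)$, and in particular to $s_1=1_{[0,t]}$, $s_2=1_{[0,s]}$, which is precisely the family whose Fourier transforms furnish that dense subspace.

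Next, with the sign convention \eqref{ft},
\[
\widehat{1_{[0,t]}}(u)=\int_0^t e^{-iux}\,dx=\frac{1-e^{-iut}}{iu},
\]
and a direct algebraic simplification gives
\[
\widehat{1_{[0,t]}}(u)\,\overline{\widehat{1_{[0,s]}}(u)}=\frac{(e^{-iut}-1)(e^{ius}-1)}{u^2},
\]
which is the complex conjugate of the integrand in \eqref{totoche}. Inserting this into the polarized isometry yields
\[
K(t,s)=\varphi_1\int_{\R}\frac{(e^{-iut}-1)(e^{ius}-1)}{u^2}\,d\mu(u).
\]
Since $1_{[0,t]}$ is real and $\omega\in\mathcal S'(\R)$ is a real distribution, $Q_{1_{[0,t]}}(\omega)\in\R$, so $K(t,s)$ is real; consequently the above integral coincides with its complex conjugate $\int (e^{iut}-1)(e^{-ius}-1)u^{-2}d\mu(u)$, giving \eqref{totoche} (up to absorbing the harmless factor $\varphi_1$ into $\mu$).

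The main obstacle is making the first step rigorous, i.e.\ confirming that the polarized isometry genuinely extends to $1_{[0,t]}$. This needs two ingredients: (i) $\widehat{1_{[0,t]}}\in\mathbf L^2(\R,d\mu)$, which follows from $\int(1+u^2)^{-1}d\mu(u)<\infty$ together with the bound $|\widehat{1_{[0,t]}}(u)|\le\min(|t|,2/|u|)$; and (ii) the density statement, which transfers the isometry to the closure. Once those are in hand the rest is bookkeeping with Fourier transforms.
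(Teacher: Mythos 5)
Your proposal is correct and follows exactly the route the paper intends: the paper states this theorem without proof, immediately after establishing the isometry \eqref{mumumu} and the density of the span of $u\mapsto(e^{iut}-1)/u$, so the argument is meant to be precisely your polarization of the isometry, extension by continuity to $1_{[0,t]}$, and the computation $\widehat{1_{[0,t]}}(u)=(1-e^{-iut})/(iu)$. Your observation that the derivation actually produces an extra factor $\varphi_1$ in front of the integral (absent from the statement \eqref{totoche}, consistent with the normalization kept explicitly in the analogous formula $\langle Q_{1_A},Q_{1_B}\rangle=\varphi_1\mu(A\cap B)$ of the preceding section) is a legitimate catch rather than a flaw in your argument.
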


We recall that \eqref{totoche} can be written as
\[
K(t,s)=r(t)+\overline{r(s)}-r(t-s)
\]
with
\[
r(t)=\int_{\mathbb R}\frac{1-e^{itu}}{u^2}d\mu(u).
\]
This includes the fractional Brownian motion (with $d\mu(u)=|u|^{1-2H}, H\in(0,1)$) and the case of singular measures.

\section*{Aknowledgements}
The first author thanks the Foster G. and Mary McGaw Professorship in Mathematical Sciences, which supported this research. The work of the second and third authors was supported by Portuguese funds through the CIDMA -- Center for Research and Development in Mathematics and Applications, and the Portuguese Foundation for Science and Technology (``FCT--Funda\c{c}\~ao para a Ci\^encia e a Tecnologia''), within project references UIDB/04106/2020 and UIDP/04106/2020.  The second author wishes to thank also the support of the Portuguese Foundation for Science and Technology under the FCT  Sabbatical grant ref. SFRH/BSAB/143104/2018.

%%%%%%%%%%%%%%%%%%%%%%%%%%%%%
\bibliographystyle{plain}
%\bibliography{/home/alpay/Desktop/Current_projects/bib/all}
%\bibliography{/users/faculty/math/dany/bib/all}
\bibliography{all}
\end{document}